\newcommand{\leg}[2]{\genfrac{(}{)}{}{}{#1}{#2}}
\newtheorem{theorem}{Theorem}
\newtheorem{lemma}[theorem]{Lemma}
\newtheorem{corollary}[theorem]{Corollary}
\newtheorem{proposition}[theorem]{Proposition}
\newtheorem{definition}[theorem]{Definition}
\newtheorem{remarks}[theorem]{Remarks}
\newtheorem{remark}[theorem]{Remark}
\newtheorem{example}[theorem]{Example}
\numberwithin{theorem}{section} \numberwithin{equation}{section}
\renewcommand{\i}{\iota}
\newcommand{\al}{\alpha}
\newcommand{\be}{\beta}
\newcommand{\ga}{\gamma}
\renewcommand{\th}{\theta}
\newcommand{\eps}{\varepsilon}
\newcommand{\LR}{\longrightarrow}
\newcommand{\gothr}{{\mathfrak r}}
\newcommand{\gd}{{\mathfrak d}}
\newcommand{\mfa}{\mathfrak{a}}
\newcommand{\bmu}{\mbox{\boldmath$\mu$}}
\newcommand{\ov}[1]{\overline{#1}}
\newcommand{\isom}{\simeq}
\newcommand{\calD}{\mathcal{D}}
\newcommand{\om}{\omega}
\newcommand{\mfp}{\mathfrak{p}}
\newcommand{\mfb}{\mathfrak{b}}
\newcommand{\mfq}{\mathfrak{q}}
\newcommand{\mff}{\mathfrak{f}}
\newcommand{\f}{\mathfrak{f}}
\newcommand{\calB}{\mathcal{B}}
\newcommand{\calF}{\mathcal{F}}
\newcommand{\calFL}{\mathcal{F}_{\ell}}
\newcommand{\F}{\mathbb{F}}
\newcommand{\Cl}{{\text {\rm Cl}}}
\newcommand{\Tr}{{\text {\rm Tr}}}
\newcommand{\rk}{{\text {\rm rk}}}
\newcommand{\Q}{\mathbb{Q}}
\newcommand{\Z}{\mathbb{Z}}
\newcommand{\GL}{{\text {\rm GL}}}
\newcommand{\p}{\mathfrak p}
\renewcommand{\a}{\mathfrak a}
\newcommand{\q}{\mathfrak q}
\renewcommand{\c}{\mathfrak c}
\renewcommand{\d}{\mathfrak d}
\newcommand{\m}{{\mathfrak m}}
\newcommand{\N}{{\mathcal N}}
\newcommand{\NC}{{\lceil{\mathcal N}\rceil}}
\DeclareMathOperator{\Hom}{Hom}
\newcommand{\Gal}{{\text {\rm Gal}}}
\newcommand{\Res}{\textnormal{Res}}
\newcommand{\Disc}{\textnormal{Disc}}
\renewcommand{\b}{{\mathfrak b}}
\newcommand{\WL}{\widetilde{L}}
\newcommand{\z}{\zeta}
\newcommand{\ze}{\zeta_\ell}
\newcommand*{\longhookrightarrow}{\ensuremath{\lhook\joinrel\relbar\joinrel\rightarrow}}
\begin{document}
\title{On $D_{\ell}$-Extensions of Odd Prime Degree $\ell$}

\author{Henri Cohen}
\address{Universit\'e de Bordeaux, Institut de Math\'ematiques, U.M.R. 5251 du
C.N.R.S, 351 Cours de la Lib\'eration, 33405 TALENCE Cedex, FRANCE}
\email{Henri.Cohen@math.u-bordeaux1.fr}
\author{Frank Thorne}
\address{Department of Mathematics, University of South Carolina, 1523 Greene Street, Columbia, SC 29208, USA}
\email{thorne@math.sc.edu}
\date{\today}

\begin{abstract}
Generalizing the work of A.~Morra and the authors, we give explicit
formulas for the Dirichlet series generating function of $D_{\ell}$-extensions
of odd prime degree $\ell$ with given quadratic resolvent.
Over the course of our proof, we explain connections between our formulas and 
the Ankeny-Artin-Chowla conjecture, the Ohno--Nakagawa relation
for binary cubic forms, and other topics. We also obtain improved upper bounds
for the number of such extensions (over $\Q$) of bounded discriminant.
\end{abstract}
\maketitle

\section{Introduction}

The theory of cubic number fields 
is, in many respects, well understood. One reason for this is that the {\itshape Delone-Faddeev} \cite{DF} and {\itshape Davenport-Heilbronn \cite{DH}}
{\itshape correspondences} parametrize cubic fields in terms of binary cubic forms, 
up to equivalence by an action of $\GL_2(\Z)$, and satisfying certain local conditions. Therefore questions about counting cubic fields
can be reduced to questions about counting lattice points, and this idea has 
led to asymptotic density theorems as well as other interesting results.

In more recent work, Bhargava \cite{B4, B5} obtained similar parametrization and counting results for $S_4$-quartic and $S_5$-quintic fields. However,
generalizing this work to number fields of arbitrary degree $\ell$ seems difficult, if not impossible: the parametrizations of 
$S_3$-cubic, $S_4$-quartic, and $S_5$-quintic fields are all by {\itshape prehomogeneous vector spaces}, and for higher degree fields there is
no apparent prehomogeneous vector
space for which one could hope to establish a parametrization theorem.

In \cite{Coh-Mor} and \cite{Coh-Tho}, A.~Morra and the authors contributed to 
the cubic theory by giving  explicit formulas for the
Dirichlet generating series of discriminants of cubic fields having
\emph{given} resolvent. For example, 
%writing
%\begin{equation}
%\Phi_{-107}(s) = \sum_{\substack{[K : \Q] = 3 \\ \Disc(K) = -107n^2}} n^{-s}\;,
%\end{equation}
we have the explicit formula
\begin{equation}\label{eqn:sample_s3}
\Phi_{-107}(s) := \sum_{\substack{[K : \Q] = 3 \\ \Disc(K) = -107n^2}} n^{-s} = - \dfrac{1}{2} + \dfrac{1}{2}\left(1+\dfrac{2}{3^{2s}}\right)\prod_{\leg{321}{p}=1}\left(1+\dfrac{2}{p^s}\right)\\
+\left(1+\dfrac{2}{3^{2s}}\right)\prod_p\left(1+\dfrac{\om(p)}{p^s}\right),
\end{equation}
where $\om(p)$ is equal to $2$ or $-1$ if $p$ is totally split or inert in the unique cubic field of discriminant $321$, determined by the polynomial 
$x^3-x^2-4x+1$, and $\om(p) = 0$ otherwise. Similar formulas hold when $-107$ is replaced by any other fundamental discriminant $D$;
the formula has one main term, and one additional Euler product for each 
cubic field of discriminant $-D/3$, $-3D$, and $-27D$.

The proofs involve class field theory and Kummer theory; see also work of Bhargava and Shnidman \cite{BS} obtaining related results
through a study of binary cubic forms.

\smallskip
The object of the present paper is to generalize the theory developed in \cite{Coh-Mor} and \cite{Coh-Tho} to degree $\ell$ extensions having Galois group $D_\ell$,
for any odd prime $\ell$. (See also \cite{CoDiOl3, CDO_D4, 
CDOsurvey, maki, wright}, among other references,
for further related results.)

Let $L/k$ be an extension\footnote{A remark on our choice of notation: Readers familiar with \cite{Coh-Tho} or \cite{Coh-Mor} should note that by and large
we adopt the notation of \cite{Coh-Tho} and the progression of 
\cite{Coh-Mor}; the reader knowledgeable with the latter paper can 
immediately see the similarities and differences. (See also Morra's thesis \cite{Mor}
for a version of \cite{Coh-Mor} with more detailed proofs.)
What was called $(K_2,K,L,K'_2)$ in \cite{Coh-Mor} will now be called $(K,L,K_z,K')$ (so that the 
main number field in which most computations take place is $K_z$), and the
field names $(k,k_z,N,N_z)$ stay unchanged. The primitive cube root of
unity $\rho$ is replaced by a primitive $\ell$th root of unity $\ze$.}
of odd prime degree $\ell$, let $N=\WL$ 
be a Galois closure of $L$, and assume that $\Gal(N/k)\isom D_{\ell}$, 
the dihedral group with $2\ell$ elements. We will refer to any such $L$ as a $D_\ell$-{\itshape extension} 
of $k$, or a $D_\ell$-{\itshape field} when $k=\Q$. Below we also refer to $F_\ell$-extensions with the analogous meaning.

There exists a unique quadratic subextension $K/k$ of $N/k$, called the
\emph{quadratic resolvent} of $L$, with
$\Gal(N/K) \isom C_\ell$, and a nontrivial theorem of J.~Martinet
involving the computation of higher ramification groups (see Propositions
10.1.25 and 10.1.28 of \cite{Coh1}) tells us that its conductor $f(N/K)$ is of the form 
$f(N/K)=f(L)\Z_K$, where $f(L)$ is an ideal of the base field $k$, and that the 
relative discriminant $\gd(L/k)$ of $L/k$ is given by the formula 
$\gd(L/k)=\gd(K/k)^{(\ell-1)/2}f(L)^{\ell-1}$. 

We study the set $\calFL(K)$ of $D_{\ell}$-extensions
of $k$ whose quadratic resolvent field is isomorphic to $K$.
(Here and in the sequel, extensions are always considered up to 
$k$-isomorphism.) More precisely, we want to compute as explicitly as possible
the Dirichlet series\footnote{The series also depends on the base field $k$, which we do not include explicitly
in the notation.}
$$\Phi_{\ell}(K,s)=\dfrac{1}{\ell-1}+\sum_{L\in\calFL(K)}\dfrac{1}{\N(f(L))^s}\;,$$
where $\N(f(L))=\N_{k/\Q}(f(L))$ is the absolute norm of the ideal $f(L)$. 

\smallskip

Our most general result is Theorem \ref{thm:main}, which we specialize
to a more explicit version in the case $k=\Q$ as Theorem \ref{thm_main_q}.
This should be considered as the most important result of this paper. In Section
\ref{sec_trans} we prove that our formulas can always be brought into a form
similar to \eqref{eqn:sample_s3}. For example, we have

%Two sample results are as follows:
\begin{equation*}
  \Phi_5(\Q(\sqrt{5}),s)=\dfrac{1}{20}\left(1+\dfrac{4}{5^s}\right)\prod_{p\equiv1\pmod5}\left(1+\dfrac{4}{p^s}\right)+
  \dfrac{1}{5}\left(1-\dfrac{1}{5^s}\right)\prod_{p\equiv1\pmod5}\left(1+\dfrac{\om_E(p)}{p^s}\right)\;,
\end{equation*}
where $E$ is the field defined by $x^5+5x^3+5x-1=0$ of discriminant $5^7$, and
$\om_E(p)=-1$, $4$, or $0$ according to whether $p$ is inert, totally split,
or other in $E$. 

%We also have
%\smallskip
%\begin{equation}\label{eq513}\Phi_5(\Q(\sqrt{13}),s)=\dfrac{1}{20}\left(1+\dfrac{4}{25^s}\right)\prod_p\left(1+\dfrac{4}{p^s}\right)+\dfrac{1}{5}\left(1-\dfrac{1}{25^s}\right)\prod_p\left(1+\dfrac{\omega_E(p)}{p^s}\right)\;,
%\end{equation}
%where the products are over suitable primes $p$ (see Example \ref{ex:intro}), $E$ is the field defined by
%$x^5+5x^3+5x-3=0$, and $\om_E(p)$ is as before.

\medskip

In a companion paper, joint with Rubinstein-Salzedo \cite{CRST}, we investigate a curious twist to this story. Taking the $n = 1$ term of 
formula \eqref{eqn:sample_s3} (or, rather, its generalization to any $D$) yields the nontrivial identity 
\begin{equation}\label{eqn:nakagawa}
N_3(D^*)+N_3(-27D)=\begin{cases}
N_3(D) & \text{ if $D<0$,}\\
3N_3(D)+1 & \text{ if $D>0$,}\end{cases}
\end{equation}
for any fundamental discriminant $D$,
where 
$D^* = -3D$ if $3 \nmid D$ and $D^* = -D/3$ if $3 \mid D$. (Here $N_3(k)$ is the number of cubic fields of discriminant $k$. Note that there are no cubic fields of discriminant $-3D$ if $3 \mid D$.)
This identity was previously conjectured by Ohno \cite{ohno} and then proved by Nakagawa \cite{N},
as a consequence of an `extra functional equation' for the Shintani zeta function associated to the lattice of binary cubic forms. Our generalization
of \eqref{eqn:sample_s3} thus subsumes the Ohno--Nakagawa theorem \eqref{eqn:nakagawa}.

Our proof there {\itshape used} the Ohno--Nakagawa theorem, but in \cite{CRST} we further develop some of the techniques of this paper (in particular, of Section \ref{sec:gb})
to give another proof of \eqref{eqn:nakagawa} and give a generalization to any prime $\ell \geq 3$. For $\ell > 3$ our work
relates counts
of $D_\ell$-fields (the right-hand side of \eqref{eqn:nakagawa}) to counts of $F_\ell$-fields $\ell$ (the left-hand side), where $F_\ell$ is the {\itshape Frobenius
group} of order $\ell (\ell - 1)$, whose definition is recalled in Section \ref{sec_trans}. (Note that $S_3 = D_3 = F_3$.)
The result involves a technical (Galois theoretic) condition on the $F_\ell$-fields which is not automatically satisfied for $\ell > 3$, and
we defer to \cite{CRST} for a complete statement of the results.
It is however important to note that, as for the cubic case, even the case
$n=1$ of our Dirichlet series identities gives
%such as \eqref{eq513} gives
interesting results: for instance, for any negative fundamental discriminant
$-D$ coprime to $5$, we have
\begin{equation}\label{eqn:main_cor_neg}
N_{F_5}\big((-1)^0 5^3 D^2\big)
+ 
N_{F_5}\big((-1)^0 5^5 D^2\big)
+
N_{F_5}\big((-1)^0 5^7 D^2\big)
= 
N_{D_5}\big((-D)^2\big)
+
N_{D_5}\big((-5 D)^2 \big)\;,
\end{equation}
and if instead $D > 1$, then we have
\begin{equation}\label{eqn:main_cor_pos}
N_{F_5}\big((-1)^2 5^3 D^2\big)
+ 
N_{F_5}\big((-1)^2 5^5 D^2\big)
+
N_{F_5}\big((-1)^2 5^7 D^2\big)
=
5 \Big( N_{D_5}\big(D^2\big)
+
N_{D_5}\big((5D)^2 \big) \Big) + 2\;.
\end{equation}
(Here, $N_G((-1)^r X)$ denotes the number of quintic fields with discriminant exactly equal to $X$, with
$r$ pairs of complex embeddings, and whose Galois closure has Galois group $G$ over $\Q$.)

If we want an identity counting $D_5$-fields of discriminant $(\pm D)^2$ {\itshape or}
$(\pm 5D)^2$ alone, then the left side of \eqref{eqn:main_cor_neg} 
and \eqref{eqn:main_cor_pos}
becomes more complicated, and involves the Galois condition mentioned above. The relevance to the present paper is
that it is {\itshape precisely those $F_\ell$-fields counted by this
identity that yield Euler products.} We describe this in more detail in Section \ref{sec_trans}.

\medskip

There is one further curiosity that emerges in our work: a connection to a well-known conjecture attributed to\footnote{Ankeny,
Artin, and Chowla did not conjecture this in \cite{AAC}, although they did explicitly ask if it is true.
 Mordell \cite{Mordell}
attributed the conjecture to them in followup work, where he proved the conjecture for regular primes.}
Ankeny, Artin, and Chowla \cite{AAC}
which states that 
if $\ell \equiv 1 \pmod 4$ is prime and $\epsilon = (a + b \sqrt{\ell})/2$ is
the fundamental unit of $\Q(\sqrt{\ell})$, then $\ell \nmid b$.
As we will see, the truth or falsity of the conjecture will be reflected in our explicit formula for $D_{\ell}$-extensions having
quadratic resolvent $\Q(\sqrt{\ell})$. Note that the conjecture is known to 
be true for $\ell<2\cdot 10^{11}$ (see \cite{PRW}), but on heuristic grounds it should be
false: if we assume independence of the divisibility by $\ell$, the number
of counterexamples for $\ell\le X$ should be around $\log(\log(X))/2$;
in addition, numerous counterexamples can easily be found for ``fake''
quadratic fields, see e.g., \cite{Coh5, Oh}. 

\medskip
Separately, we obtain improved bounds for the number $N_\ell(D_\ell, X)$ of $D_\ell$-fields $L$
with $|\Disc(L)| < X$:
\begin{theorem}\label{thm:improved_bounds}
We have 
\begin{equation}
N_\ell(D_\ell, X) \ll_{\ell, \epsilon} X^{\frac{3}{\ell - 1} - \frac{1}{\ell (\ell - 1)} + \epsilon}.
\end{equation}
\end{theorem}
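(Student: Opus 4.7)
The plan is to parameterize $D_\ell$-extensions $L/\Q$ by pairs $(K,\chi)$ where $K$ is the quadratic resolvent and $\chi$ is an appropriate ray class character of $K$, and then to estimate the resulting double sum using Ellenberg--Venkatesh's bound on $\ell$-torsion in class groups. Applying Martinet's formula from the introduction with $k=\Q$, one has $|\Disc(L)| = |\Disc(K)|^{(\ell-1)/2} f(L)^{\ell-1}$, so the constraint $|\Disc(L)| \leq X$ is equivalent to $|\Disc(K)| \cdot f(L)^2 \leq X^{2/(\ell-1)}$. By class field theory, each such $L$ corresponds, up to a factor of $\ell-1$, to a character $\chi$ of $\Cl_{f(L)\Z_K}(K)$ of exact order $\ell$, exact conductor $f(L)\Z_K$, and satisfying $\chi^\sigma = \chi^{-1}$ for $\sigma$ generating $\Gal(K/\Q)$; this eigenvalue condition is precisely what distinguishes $D_\ell$-extensions from $C_{2\ell}$-extensions.

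I would upper-bound the number of such characters by the full $\ell$-torsion $|\Cl_{f\Z_K}(K)[\ell]|$, and decompose this using the exact sequence
\[
1 \to (\Z_K/f\Z_K)^\times/\overline{\Z_K^\times} \to \Cl_{f\Z_K}(K) \to \Cl(K) \to 1
\]
to get $|\Cl_{f\Z_K}(K)[\ell]| \ll |\Cl(K)[\ell]| \cdot |(\Z_K/f\Z_K)^\times[\ell]|$. A direct local analysis shows the second factor is $\ll_\epsilon f^\epsilon$: at each prime $p \mid f$ coprime to $\ell$ the local $\ell$-torsion is nontrivial only when $p \equiv \pm 1 \pmod \ell$, contributing altogether at most $\ell^{\omega(f)}$, while the exponent $v_\ell(f(L))$ is uniformly bounded in terms of $\ell$ (the wild ramification in a $D_\ell$-extension of $\Q_\ell$ is controlled by local Galois theory), so the wild part contributes only $O_\ell(1)$. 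For the first factor I would invoke the Ellenberg--Venkatesh theorem for quadratic fields,
\[
|\Cl(K)[\ell]| \;\ll_{\ell,\epsilon}\; |\Disc(K)|^{1/2 - 1/(2\ell) + \epsilon}.
\]

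Combining these bounds and summing over $K$ and $f$ yields
\[
N_\ell(D_\ell, X) \;\ll_{\ell,\epsilon}\; X^\epsilon \sum_{|\Disc(K)| \leq X^{2/(\ell-1)}} |\Disc(K)|^{1/2 - 1/(2\ell)} \sqrt{\frac{X^{2/(\ell-1)}}{|\Disc(K)|}},
\]
and an elementary computation (using that there are $O(Y)$ quadratic $K$ with $|\Disc(K)| \leq Y$) reduces this to $X^{3/(\ell-1) - 1/(\ell(\ell-1)) + \epsilon}$, as claimed. The main obstacle is the nontrivial $\ell$-torsion bound on $\Cl(K)$: using only the Brauer--Siegel-type bound $|\Cl(K)| \ll_\epsilon |\Disc(K)|^{1/2+\epsilon}$ one recovers just the trivial $X^{3/(\ell-1) + \epsilon}$, so the improvement in Theorem~\ref{thm:improved_bounds} is precisely the savings of Ellenberg--Venkatesh over Brauer--Siegel, multiplied through the Martinet reduction.
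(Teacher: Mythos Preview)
Your overall strategy---parameterize by the quadratic resolvent $K$ and the conductor $f$, bound the multiplicity by $|\Cl(K)[\ell]|$ times a divisor-type factor in $f$, then sum---is exactly the approach the paper takes, following Kl\"uners. The computation at the end is also correct.

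The gap is in the input you use for $|\Cl(K)[\ell]|$. The pointwise bound
\[
|\Cl(K)[\ell]| \;\ll_{\ell,\epsilon}\; |\Disc(K)|^{1/2 - 1/(2\ell) + \epsilon}
\]
for every quadratic $K$ is the Ellenberg--Venkatesh bound \emph{under GRH}; it is not known unconditionally for general $\ell$. Invoking it as you do would make Theorem~\ref{thm:improved_bounds} conditional. What the paper actually uses is Ellenberg--Pierce--Wood \cite{EPW}, which proves unconditionally that this bound holds for all but $O\bigl(Y^{1-1/(2\ell)}\bigr)$ discriminants $|D|\le Y$. One then splits the sum over $D$ into ``good'' discriminants (where the EPW bound applies) and the small exceptional set; on the latter one uses the trivial bound $|\Cl(K)[\ell]|\le h(K)\ll |D|^{1/2+\epsilon}$, and the scarcity of exceptional $D$ compensates. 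Both pieces contribute $\ll X^{3/(\ell-1)-1/(\ell(\ell-1))+\epsilon}$.

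So the fix is not a different idea but a different citation together with a short extra step handling the exceptional discriminants. Once you replace Ellenberg--Venkatesh by Ellenberg--Pierce--Wood and add that splitting, your argument matches the paper's.
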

This improves on Kl\"uners's \cite{kluners} bound of $O(X^{\frac{3}{\ell - 1} + \epsilon})$.
The proof (in Section \ref{sec:upper_bounds}) is independent of the rest of the paper, and is an
immediate consquence of applying Ellenberg, Pierce, and Wood's \cite{EPW} recent
bounds on $\ell$-torsion in quadratic fields
within Kl\"uners's method.

\medskip
Our work follows several other papers studying dihedral field extensions. Much of the theory (such as Martinet's theorem)
is described in the first author's book \cite{Coh1}. Another reference is 
Jensen and Yui \cite{JY}, who
studied $D_{\ell}$-extensions from multiple points of view. They proved that if $\ell \equiv 1 \pmod 4$ is a regular prime,
then no $D_{\ell}$-extension of $\Q$ has discriminant a power of $\ell$, and we will
recover and strengthen their result. Jensen and Yui also studied the problem of constructing $D_{\ell}$-extensions, and gave several examples.

Another relevant work is the paper of Louboutin, Park, and Lefeuvre \cite{LPL}, who developed a general class field
theory method to construct real $D_{\ell}$-extensions. These problems have also been addressed in the function field
setting by Weir, Scheidler, and Howe \cite{WSH}.

\bigskip
Since some of the proofs are quite technical, we give a detailed overview 
of the contents of this paper.

We begin in Section \ref{sec:galois} with a characterization of
the fields $L\in\calFL(K)$ using Galois and Kummer theory. These fields are in bijection with 
elements of
$K_z := K(\ze)$ modulo $\ell$th powers, satisfying certain restrictions which
guarantee that the associated Kummer extensions of $K_z$ descend to degree
$\ell$ extensions of $k$. Writing such an extension as $K_z(\root\ell\of\al)$ with
$\alpha \Z_{K_z} = \prod_{0 \leq i \leq \ell - 2} \mfa_i^{g^i} \mfq^\ell$, we further characterize these fields
in terms of conditions on the $\mfa_i$ and an associated member $\ov{u}$ of a Selmer group associated to $K_z$.

These conditions are described in terms of the group ring $\mathbb{F}_\ell[G]$, where $G = \Gal(K_z/k)$.
Groups such as $K_z^*/{K_z^*}^\ell$, $\Cl(K_z)[\ell]$, and the Selmer group are naturally
$\mathbb{F}_\ell[G]$-modules, and our conditions correspond to being annihilated
by certain elements of $\mathbb{F}_\ell[G]$ (see Definition \ref{defT}). 

In Section \ref{sec:galois} we also study the subfields of $K_z/k$, with particular attention to a degree $\ell - 1$ extension
$K'/k$
called the {\itshape mirror field} of $K$; we will see that much of the arithmetic of prime splitting in various
extensions can be conveniently expressed in terms of $K'$. 

The reader who is willing to take our technical computations for granted is
advised to look only at the necessary definitions in the intermediate sections
and to skip directly to Section \ref{sec:semifinal}.

In Section \ref{sec:conductors}, we give an expression for the `conductor' $f(L)$ in terms of the quantities $\mfa_i$ and
$\ov{u}$ defined in Section \ref{sec:galois}. The main result, Theorem \ref{thmfnk}, was proved by the first author,
Diaz y Diaz, and Olivier in \cite{CoDiOl3} in their study of cyclic extensions of degree $\ell$.
% and we also prove a few
%additional related lemmas and propositions. 
Unfortunately the results of that section are rather complicated to state,
and oblige us to introduce a fair amount of notation.

In Section \ref{sec:dir_series} we begin to study the fundamental Dirichlet series using the results proved in Section
\ref{sec:conductors}, and in Section \ref{sec:comp_gb}
we study the size of a certain Selmer group appearing in our formulas. The latter section is heavily algebraic and again
appeals heavily to the results of \cite{CoDiOl3}.

In Section \ref{sec:semifinal}, we put everything together to obtain our most general formula (Theorem \ref{thm:main})
for $\Phi_{\ell}(K, s)$, a generalization of the main theorem of \cite{Coh-Mor}. Subsequently we work to 
make everything more explicit,
for the most part specialized to the case $k = \Q$. In Section \ref{sec_q} we compute various quantities appearing
in Theorem \ref{thm:main} for $k = \Q$, leading to Theorem \ref{thm_main_q}, a more explicit specialized version of Theorem 
\ref{thm:main}. We also obtain asymptotics for counting $D_\ell$-extensions of $\Q$, proved in 
Corollary \ref{corasym}.

The formula of Theorem \ref{thm_main_q} involves a somewhat complicated group $G_{\mfb}$, and in Section \ref{sec:gb}
we further study its size. The main result is the Kummer pairing of 
Theorem \ref{thm:main_kummer}, familiar from (for example) the proof of the Scholz reflection principle, and fairly 
simple to prove. One important input (Proposition \ref{prop:kum_hec}) is a very nice 
relationship, due essentially to Kummer and Hecke, 
between the conductor of Kummer extensions of $K_z$, and congruence properties of the $\ell$th roots used to generate them.

%Some of our work in Section \ref{sec:gb} (including Theorem \ref{thm:main_kummer})
%is also critical in \cite{CRST}, and to avoid redundancy we only sketch a few results whose
%complete proofs are given there.

In Section \ref{sec:gb} we also explore the connection to the conjecture of Ankeny, Artin, and Chowla mentioned above. The truth or falsity of this conjecture will then be reflected in our explicit formula (Proposition \ref{prop:special_explicit}) 
for $\Phi_{\ell}(\Q(\sqrt{\ell}), s)$, and in Corollary \ref{cor:ac_exist} we will give a proof of an observation of Lemmermeyer,
that the existence of $D_{\ell}$-fields ramified only at $\ell$ is equivalent to the falsity of the Ankeny-Artin-Chowla conjecture.

In Section \ref{sec_trans} we further study the characters of the group $G_{\mfb}$, and prove (in Theorem \ref{thm:char_nf})
that each such character corresponds to an $F_\ell$-extension $E/k$, such that the values of $\chi$ correspond to the splitting
types of primes in $E$. This was done for $\ell = 3$ and $k = \Q$ in \cite{Coh-Tho}, but in Theorem \ref{thm:char_nf} we do not
require $k = \Q$.

It is here that the connection to the Ohno--Nakagawa theorem emerges; for $\ell = 3$ and $k = \Q$, we established in \cite{Coh-Tho}
(using Ohno--Nakagawa) that the set of characters of $G_{\mfb}$ corresponds precisely to a suitable and easily described set of
fields $E$. For $\ell > 3$ we require the generalization of Ohno--Nakagawa established in \cite{CRST}, and so in Section
\ref{sec_trans} we say a bit more about the results of \cite{CRST} and explain their relevance. We also prove an explicit
formula valid for the `special case' $K = \Q(\sqrt{\ell})$.

\section*{Acknowledgements}
We would like to thank Michael Filaseta, David Harvey, Franz Lemmermeyer, Hendrik Lenstra, David Roberts, and John Voight for helpful comments
and suggestions.

This material is based upon work supported by the National Science Foundation under Grant No. DMS-1201330 and
by the National Security Agency under a Young Investigator Grant.

\section{Galois and Kummer Theory}\label{sec:galois}

\subsection{Galois and Kummer theory, and the Group Ring}

We will use the results of \cite{CoDiOl3}, but before stating them we need 
some notation. We denote by $\ze$ a primitive $\ell$th root of unity,
we set $K_z=K(\ze)$, $k_z=k(\ze)$, $N_z=N(\ze)$, and we denote by $\tau$, $\tau_2$, and $\sigma$ 
generators of $k_z/k$, $K/k$, and $N/K$ respectively, with $\tau^{\ell - 1} = \tau_2^2 = \sigma^{\ell} = 1$.

The number $\ze$ could belong to $k$, or to $K$, or generate a nontrivial 
extension of $K$ of degree dividing $\ell-1$. These essentially correspond 
respectively to cases (3), (4), and (5) of \cite{Coh-Mor} (cases (1) and (2)
correspond to cyclic extensions of $k$ of degree $\ell$, which have been
treated in \cite{CoDiOl3}). Cases (3) and (4) are 
considerably simpler since we do not have to adjoin $\ze$ to $K$ to
apply Kummer theory. 

We are particularly interested in the case $k=\Q$, in which case 
either $[K_z:K]=\ell-1$, or $[K_z:K]=(\ell-1)/2$, i.e., $K\subset k_z$, which 
is equivalent to $K=\Q(\sqrt{\ell^*})$ with $\ell^*=(-1)^{(\ell-1)/2}\ell$. To balance
generality and simplicity, {\itshape we assume that $k$ is any number field for which
$[k_z:k]=\ell-1$.} Then, as for $k = \Q$ there are two possible cases:
either $[K_z:K]=\ell-1$, which we call the \emph{general case}, or 
$K\subset k_z=K_z$ and $[K_z:K]=(\ell-1)/2$, which we will call the
\emph{special case}. Note that if $\ell=3$ this 
means that $\ze\in K$, so we are in case (4), but there is no reason to treat
this case separately. It should not be particularly difficult to extend our results to any base field $k$,
as was done in \cite{CoDiOl3}.

We set the following notation:

\begin{itemize}
\item We let $g$ be a primitive root modulo $\ell$, and 
also denote by $g$ its image in $\F_\ell^*=(\Z/\ell\Z)^*$.
\item We let $G=\Gal(K_z/k)$. Thus in the general case 
$G\isom(\Z/2\Z)\times(\Z/\ell\Z)^*$, while in the special case 
$G=\Gal(k_z/k)\isom(\Z/\ell\Z)^*$. We denote by $\tau$ the unique element of 
$\Gal(k_z/k)$ such that $\tau(\ze)=\ze^g$, so that $\tau$ generates
$\Gal(k_z/k)$, and we again denote by $\tau$ its lift to $K_z$ or $N_z$.
\end{itemize}

The composite extension $N_z = N K_z$ is Galois over $k$, and $\sigma$ and 
$\tau$ naturally lift to $N_z$. In the general case, $\tau$ 
and $\sigma$ commute; in the special case, $\tau^2$ is a generator of 
$\Gal(K_z/K)$ and $\tau_2$ can be taken to be any odd power of $\tau$, 
for instance $\tau$ itself, so that $\tau\sigma\tau^{-1}=\sigma^{-1}$.

This information is summarized in the two Hasse diagrams below, depicting the general
and special cases respectively.

$$\xymatrix{
&&&N_z\ar@{-}_{<\tau>}[ddll]\ar@{-}^{<\sigma>}[ddd]&
&&&N_z\ar@{-}_{<\tau^2>}[ddll]\ar@{-}^{<\sigma>}[ddd]\\
&&&&&
&&&&\\
&N\ar@{-}_{<\tau_2>}[ddl]\ar@{-}^{<\sigma>\isom C_\ell}[ddd]&&&
&N\ar@{-}_{<\tau>}[ddl]\ar@{-}^{<\sigma>\isom C_\ell}[ddd]&\\
&&&K_z \supseteq \p_z\ar@{-}_{<\tau>}[ddll]\ar@{-}^{<\tau_2>}[ddl]&
&&&K_z=k_z \supseteq \p_z \ar@{-}_{<\tau^2>}[ddll]\ar@{-}^{<\tau>}[ddddlll]\\
\ell L\ar@{-}_\ell[ddd]&&&&
\ell L\ar@{-}_\ell[ddd]&&&\\
&K\supseteq \p\ar@{-}_{<\tau_2>}[ddl]&k_z=k(\ze) \supseteq \p_k \ar@{-}^{<\tau>}[ddll]&&
&K\supseteq \p\ar@{-}_{<\tau>}[ddl]&&&\\
&&&&&
&&&&\\
k \supseteq p&&&&
k \supseteq p&&&}$$

In the above $p, \p, \p_k, \p_z$ indicate our typical notation (to be used later)
for primes of $k, K, k_z, K_z$ respectively. We write $\ell L$ to indicate the existence of $\ell$
fields isomorphic to $L$.

\begin{lemma}\label{lemidem} For $a\bmod(\ell-1)$ and $b\bmod2$, set
$$e_a=\dfrac{1}{\ell-1}\sum_{j\bmod(\ell-1)}g^{aj}\tau^{-j}\in\F_\ell[G]\text{\quad and, in the general case,\quad}
e_{2,b}=\dfrac{1}{2}\sum_{j\bmod2}(-1)^{bj}\tau_2^{-j}\;.$$
The $e_a$ form a complete set of orthogonal idempotents in $\F_\ell[G]$, as do
the $e_{2,b}$ in the general case, so in the general case any 
$\F_\ell[G]$-module $M$ has a canonical decomposition
$M=\sum_{a\bmod(\ell-1),\ b\bmod2}e_ae_{2,b}M$, while in the special case we 
simply have $M=\sum_{a\bmod(\ell-1)}e_aM$.
\end{lemma}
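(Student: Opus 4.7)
The plan is to verify the defining properties of a complete system of orthogonal idempotents by direct computation in the group ring $\F_\ell[G]$, then invoke the standard module-theoretic consequence.

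First, I would work in the subring $\F_\ell[\langle\tau\rangle]$. The essential input is that $g$ is a primitive root mod $\ell$, so the character $j\mapsto g^j$ has order exactly $\ell-1$ in $\F_\ell^*$. In particular, the geometric series identity gives
\[
\sum_{i\bmod(\ell-1)} g^{ci} \;=\; \begin{cases} \ell-1 & \text{if } c\equiv 0\pmod{\ell-1},\\ 0 & \text{otherwise,}\end{cases}
\]
where both sides are read in $\F_\ell$ (note $\ell-1$ is invertible mod $\ell$, so the normalization $1/(\ell-1)$ in the definition of $e_a$ is legal). Expanding $e_ae_b$, reindexing by $m=i+j$, and factoring out a sum $\sum_i g^{(a-b)i}$ shows $e_ae_b=\delta_{a\equiv b}\,e_a$. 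Taking $\sum_a e_a$ and reversing the order of summation collapses the inner sum $\sum_a g^{aj}$ to $(\ell-1)\delta_{j\equiv 0}$, giving $\sum_a e_a=1$.

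Next, for the $e_{2,b}$ in the general case, the calculation is even shorter: using $\tau_2^2=1$ one computes
\[
4e_{2,b}^2 \;=\; (1+(-1)^b\tau_2^{-1})^2 \;=\; 2+2(-1)^b\tau_2^{-1} \;=\; 4e_{2,b},
\]
and $e_{2,0}e_{2,1}=\tfrac{1}{4}(1-\tau_2^{-2})=0$, and $e_{2,0}+e_{2,1}=1$. Here we use $\ell\neq 2$ so that $1/2\in\F_\ell$. Since in the general case $G\cong\langle\tau\rangle\times\langle\tau_2\rangle$ with $\tau$ and $\tau_2$ commuting, the products $\{e_ae_{2,b}\}$ are again a complete orthogonal system of idempotents in $\F_\ell[G]$: orthogonality and idempotence follow from those of the two factor systems together with commutativity, and $\sum_{a,b}e_ae_{2,b}=(\sum_a e_a)(\sum_b e_{2,b})=1$.

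Finally, the module decomposition is the standard consequence: for any ring $R$ with a complete orthogonal system $\{\varepsilon_i\}$ of idempotents and any $R$-module $M$, one has $M=\bigoplus_i \varepsilon_i M$, because $m=(\sum_i\varepsilon_i)m=\sum_i\varepsilon_i m$ gives the decomposition, and the orthogonality $\varepsilon_i\varepsilon_j=\delta_{ij}\varepsilon_i$ forces the sum to be direct. Applied with $R=\F_\ell[G]$ and the idempotents just constructed, this yields exactly the asserted decomposition in both the general and special cases. There is no real obstacle; the only things one must watch are that $\ell-1$ and (in the general case) $2$ are invertible in $\F_\ell$, which holds since $\ell$ is an odd prime.
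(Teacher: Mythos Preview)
Your proof is correct and is precisely the direct verification the paper has in mind: the paper's own proof is the single line ``Immediate and classical; see, e.g., Section 7.3 of \cite{Eis}.'' You have simply written out the standard computation underlying that reference.
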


\begin{proof} Immediate and classical; see, e.g., Section 7.3 of \cite{Eis}.\end{proof}

We set the following definitions: 

\begin{definition}\label{defT}
In the group ring $\F_\ell[G]$, we set 
$$T=\begin{cases}
\{\tau_2+1,\tau-g\}\text{\quad in the general case\;,}\\
\{\tau+g\}\text{\quad in the special case\;.}\end{cases}$$
\begin{enumerate}
\item We define $\i(\tau_2+1)=e_{2,1} = \frac{1}{2}(1 - \tau_2)$, and for any $a$
we define $\i(\tau-g^a)=e_a$, so that for instance
$\i(\tau+g)=e_{(\ell+1)/2}$.
\item For any $\F_\ell[G]$ module $M$, we denote by 
$M[T]$ the subgroup annihilated by all the elements of $T$.
\end{enumerate}\end{definition}

\begin{lemma}\label{lem24} Let $M$ be an $\F_\ell[G]$-module.
\begin{enumerate}
\item For any $t\in T$ we have $t\circ\i(t)=\i(t)\circ t=0$, where the
action of $t$ and $\i(t)$ is on $M$.
\item For all $t\in T$ we have $M[t]=\i(t)M$ and $M[\i(t)]=t(M)$.
\item If $x\in M[t]$ then $\i(t)(x)=x$.
\end{enumerate}
\end{lemma}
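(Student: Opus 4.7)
The plan is to reduce everything to standard properties of the orthogonal idempotents $e_a$ and $e_{2,b}$ from Lemma \ref{lemidem}, combined with the eigenvalue action of $\tau$ and $\tau_2$ on the canonical decomposition of $M$. The key computational observation, which I would establish first, is that for every $a$,
\[
\tau \cdot e_a \;=\; g^{a}\, e_a \quad\text{in } \F_\ell[G],
\]
which follows by re-indexing the defining sum $e_a=\frac{1}{\ell-1}\sum_{j} g^{aj}\tau^{-j}$ via $j\mapsto j-1$; similarly $\tau_2\cdot e_{2,b}=(-1)^{b}e_{2,b}$. Consequently, $\tau$ acts on the summand $e_a M$ as multiplication by $g^a$, and $\tau_2$ acts on $e_{2,b}M$ as $(-1)^b$.

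For part (1), I would simply check $t\cdot\iota(t)=0$ directly in the group ring. For $t=\tau_2+1$, $(\tau_2+1)\cdot\tfrac12(1-\tau_2)=\tfrac12(1-\tau_2^2)=0$ since $\tau_2^2=1$. For $t=\tau-g^a$ (so $\iota(t)=e_a$), the eigenvalue formula gives $(\tau-g^a)e_a=g^a e_a-g^a e_a=0$. In the special case $t=\tau+g$, I note that $-g=g^{(\ell+1)/2}$, so $\iota(t)=e_{(\ell+1)/2}$ and the same calculation applies.

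For part (2), I would use the canonical decomposition $M=\bigoplus_{a,b} e_ae_{2,b}M$ (or $\bigoplus_a e_a M$ in the special case). For $t=\tau-g$: $t$ acts on $e_a M$ as multiplication by $g^a-g\in\F_\ell$, which is invertible for $a\ne 1$ and zero for $a=1$; hence $M[t]=e_1M=\iota(t)M$, and $tM$ equals $\bigoplus_{a\ne 1}e_a M$, which coincides with $\ker(e_1)=M[\iota(t)]$ by orthogonality of the idempotents. The argument for $t=\tau_2+1$ is identical, using that $\tau_2$ has eigenvalues $\pm 1$ on the two summands, and the case $t=\tau+g$ works analogously with $e_{(\ell+1)/2}$ in the role of $e_1$.

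Part (3) is then immediate from (2) together with the fact that $\iota(t)$ is idempotent: if $x\in M[t]$, then by (2) we can write $x=\iota(t)y$, whence $\iota(t)(x)=\iota(t)^2 y=\iota(t)y=x$. The only mild obstacle is bookkeeping the three possible forms of $t$ uniformly; I would handle this by treating $\tau_2+1$ separately and then observing that both $\tau-g$ (general case) and $\tau+g$ (special case) are instances of $\tau-g^a$ for appropriate $a$, so a single calculation on eigenspaces of $\tau$ covers both.
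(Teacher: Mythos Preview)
Your proof is correct and follows essentially the same approach as the paper: the paper's proof simply says ``This follows from Lemma~\ref{lemidem}. In particular, $\tau e_a = g^a e_a$, so that the image of $\tau - g^a$ is $\sum_{b \neq a} e_b M$,'' which is exactly the eigenvalue computation you spell out in detail. You have supplied the routine verifications the paper leaves implicit.
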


\begin{proof} This follows from Lemma \ref{lemidem}. In particular, $\tau e_a = g^a e_a$,
so that the image of $\tau - g^a$ is $\sum_{b \neq a} e_a M$.
\end{proof}
\subsection{The Bijections}

\begin{proposition}\label{propbij1}\begin{enumerate}
\item There exists a bijection between elements $L\in\calFL(K)$ and classes
of elements $\ov{\al}\in(K_z^*/{K_z^*}^\ell)[T]$ such that $\ov{\al}\ne\ov{1}$,
modulo the equivalence relation identifying $\ov{\al}$ with $\ov{\al^j}$ for
all $j$ with $1\le j\le\ell-1$.
\item If $\al\in K_z^*$ is some representative of $\ov{\al}$, the extension
$L/k$ corresponding to $\al$ is the field $K_z(\root\ell\of\al)^G$, i.e.,
the fixed field of $K_z(\root\ell\of\al)$ by a lift of  $G=\Gal(K_z/k)$ to
$\Gal(K_z(\root\ell\of\al)/k)$.
\end{enumerate}\end{proposition}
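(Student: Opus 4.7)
The plan is to assemble the bijection in three stages: pure Kummer theory over $K_z$, the descent condition coming from $G = \Gal(K_z/k)$, and a check that this descent condition is exactly the module-theoretic constraint ``annihilated by $T$.''

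First, I would associate to $L \in \calFL(K)$ the composite $N_z = NK_z$ where $N = \widetilde{L}$. In the general case $[K_z:K]=\ell-1$ is coprime to $[N:K]=\ell$, so $N_z/K_z$ is cyclic of degree $\ell$; in the special case $K_z = k_z$ but $K_z \cap N = K$ (because $N/K$ is totally ramified at primes above $\ell$ while $K_z/K$ is unramified away from $\ell$, or more directly by degree considerations), so again $[N_z:K_z] = \ell$. Since $\ze \in K_z$, standard Kummer theory gives $N_z = K_z(\root\ell\of{\al})$ for some $\al \in K_z^*$, with $\langle \ov\al \rangle$ determined uniquely by $N_z/K_z$; this is exactly the equivalence $\ov\al \sim \ov{\al^j}$ for $1 \le j \le \ell-1$.

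Second, I would translate the requirement that $N_z/k$ be Galois with the dihedral structure $\Gal(N/k)\cong D_\ell$ and resolvent $K$ into an action condition on $\ov\al$. Given any lift of $\gamma \in G$ to $\Gal(N_z/k)$, a direct computation with Kummer generators (exactly as in the derivation of the Kummer pairing) shows that $\gamma\sigma\gamma^{-1} = \sigma^{\chi(\gamma)/c(\gamma)}$, where $\chi$ is the cyclotomic character and $c(\gamma)\in\F_\ell^*$ is defined by $\gamma(\ov\al) = \ov\al^{c(\gamma)}$. In the general case, $\Gal(N_z/k) = D_\ell \times \Gal(k_z/k)$, so $\tau$ commutes with $\sigma$, forcing $c(\tau) = \chi(\tau) = g$, and the dihedral relation $\tau_2\sigma\tau_2^{-1} = \sigma^{-1}$ forces $c(\tau_2) = -\chi(\tau_2) = -1$. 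These two equalities read $(\tau - g)\ov\al = 0$ and $(\tau_2+1)\ov\al = 0$, i.e., $\ov\al \in (K_z^*/{K_z^*}^\ell)[T]$. In the special case one has $\tau_2 = \tau$ (the unique element of order $2$ in the cyclic group $\Gal(k_z/k)$, chosen as in the diagram) and $\tau\sigma\tau^{-1} = \sigma^{-1}$, which gives $c(\tau) = \chi(\tau)/(-1) = -g$, i.e., $(\tau+g)\ov\al = 0$. This is the content of Definition \ref{defT}.

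Third, to go back, start with $\ov\al \in (K_z^*/{K_z^*}^\ell)[T]$, $\ov\al \ne \ov 1$. Form $M := K_z(\root\ell\of{\al})$, which is cyclic of order $\ell$ over $K_z$. The annihilation condition says that each $\gamma \in G$ sends $\langle\ov\al\rangle$ into itself, so $M/k$ is Galois. The short exact sequence
\[
1 \longrightarrow \Gal(M/K_z) \longrightarrow \Gal(M/k) \longrightarrow G \longrightarrow 1
\]
has kernel of order $\ell$ coprime to $|G|$, so by Schur–Zassenhaus it splits and all complements are conjugate. Taking $L = M^H$ for any such complement $H$ gives a well-defined $k$-isomorphism class of degree $\ell$ extension, and the previous paragraph's computation run in reverse shows that the $c(\gamma)$'s dictate the action of $G$ on $\Gal(M/K_z)$ in such a way that the subgroup of $\Gal(M/k)$ fixing $K$ has structure $D_\ell$, with quadratic resolvent precisely $K$ and not some other quadratic subfield of $K_z$. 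Finally, the two constructions are visibly inverse modulo the indicated equivalence, and the condition $\ov\al \ne \ov 1$ corresponds exactly to $L \ne k$.

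The main obstacle I expect is the bookkeeping in the third step (and its counterpart in the second): one must be careful, especially in the special case where $K \subset k_z$, that the lift of $G$ really produces a fixed field of degree $\ell$ over $k$ whose Galois closure has group $D_\ell$ with the prescribed quadratic resolvent $K$, rather than, say, a cyclic $C_{2\ell}$-extension or a $D_\ell$-extension with a different resolvent inside $K_z$. Keeping track of the cyclotomic character $\chi$ versus the ``dihedral twist'' character on $\Gal(N/K)$ is the core of the calculation; the algebraic identities $T$ are precisely recording this twist.
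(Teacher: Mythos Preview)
Your approach is correct and runs parallel to the paper's, but with a genuinely different organizing principle in the backward direction. The paper constructs explicit lifts $\ov\tau(\theta)=\eta\theta^{\eps g}$ and $\ov{\tau_2}(\theta)=\eta_2/\theta$ of the generators of $G$ to $\Gal(K_z(\root\ell\of\al)/k)$ and then verifies the relations $\ov\tau^{\ell-1}=1$, $\ov{\tau_2}^2=1$, etc.\ by hand (with a separate argument in the special case that no lift of $\tau$ can have order $\ell(\ell-1)$). You instead invoke Schur--Zassenhaus on the extension $1\to\langle\sigma\rangle\to\Gal(M/k)\to G\to 1$, which gives both existence and conjugacy of complements in one stroke; this is cleaner and explains conceptually why the $\ell$ conjugate subfields $L$ arise. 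Your character identity $\gamma\sigma\gamma^{-1}=\sigma^{\chi(\gamma)/c(\gamma)}$ is exactly the content of the paper's computation with $\eta=\tau(\theta)/\theta^{\eps g}$, just packaged more uniformly.

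Two expository slips to fix before this is a complete proof. First, in the special case $\tau$ is \emph{not} ``the unique element of order $2$'' in $\Gal(k_z/k)$; it is a generator of order $\ell-1$ whose restriction to $K$ has order $2$. The relation $\tau\sigma\tau^{-1}=\sigma^{-1}$ you use is nonetheless correct. Second, and more substantively, the sentence ``the subgroup of $\Gal(M/k)$ fixing $K$ has structure $D_\ell$'' is false as stated: in the general case $\Gal(M/K)=\langle\sigma,\tau\rangle\simeq C_\ell\times C_{\ell-1}$ since $\tau$ commutes with $\sigma$. What you actually need is that the normal core of your complement $H$ in $\Gal(M/k)$ is $\langle\tau\rangle$ (general case) or $\langle\tau^2\rangle$ (special case), so that the Galois closure $N=M^{\text{core}(H)}$ has $\Gal(N/k)\simeq D_\ell$ with unique quadratic subfield $K$. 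You flag exactly this point as the ``main obstacle,'' and the verification is routine once the conjugation action is pinned down by your character formula, but it should be written out rather than asserted.
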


\begin{proof}
First assume that $L \in \calFL(K)$;
% and define $K_z$, $N$, $N_z$ as above;
since $\ze\in K_z$, by Kummer theory cyclic extensions of
degree $\ell$ of $K_z$ are of the form $N_z=K_z(\root\ell\of\al)$, where 
$\ov{\al}\ne\ov{1}$ is unique in $K_z^*/{K_z^*}^\ell$ modulo the equivalence
relation mentioned in the proposition. As $N_z$ determines $L$ up to conjugacy,
we must prove that $\ov{\al}$ is annihilated by $T$.

Writing $N_z = K_z(\th)$ with 
$\th^\ell=\al$,
we may assume the generator $\sigma$ chosen so that $\sigma(\th)=\ze\th$.
Set $\eps=1$ if we are in the general case, $\eps=-1$ if we are in the
special case, so that $\tau\sigma\tau^{-1}=\sigma^\eps$. We have 
$\tau(\ze)=\ze^g$, so that
$$\sigma(\tau(\th))=\tau(\sigma^\eps(\th))=\tau(\ze^\eps)\tau(\th)=\ze^{\eps g}\tau(\th)\;,$$
hence if we set $\eta=\tau(\th)/\th^{\eps g}$ we have
$\sigma(\eta)=\ze^{\eps g}\tau(\th)/\ze^{\eps g}\th^{\eps g}=\eta$. It follows
by Galois theory that $\eta\in K_z$, so that 
$\tau(\al)/\al^{\eps g}=\eta^\ell\in{K_z^*}^\ell$, hence
that $\ov{\al}\in (K_z^*/{K_z^*}^\ell)[\tau-\eps g]$. 

Concerning $\tau_2$ (in the general case only), 
the relation $\tau_2\sigma\tau_2^{-1}=\sigma^{-1}$ similarly
shows that $\sigma(\th\tau_2(\th))=\th\tau_2(\th)$ so that
$\ov{\al}\in (K_z^*/{K_z^*}^\ell)[\tau_2+1]$, in other words
$\ov{\al}\in (K_z^*/{K_z^*}^\ell)[T]$ as desired.

\medskip
To conclude, we must prove that each $\ov{\al}\in (K_z^*/{K_z^*}^\ell)[T]$ determines
such an $L \in \calFL(K)$. Again write $\theta = \root\ell\of\al$ with $\sigma(\theta) = \zeta_{\ell} \theta$ and $N_z = K_z(\theta)$.
Define an automorphism $\overline{\tau}$ of $N_z$, agreeing with $\tau$ on $K_z$, by writing $\overline{\tau}(\theta) = \eta \theta^{\eps g}$
($\eps = \pm 1$ as before), where 
$\eta^{\ell} = \tau(\alpha)/ \alpha^{ \eps g} \in K_z^{\ell}$, so that 
$\eta \in K_z$ is well-defined up to an $\ell$th root
of unity, and we make an arbitrary such choice.

Computations show that $\ov{\tau} \sigma^{\eps} (\theta) = \sigma \ov{\tau} (\theta)$ and that
$\ov{\tau}^{\ell - 1}(\theta)$ is $\theta$ times a root of unity. Each $\ov{\tau} \sigma^i$ is also a lift
of $\tau$. 

In the general case, 
we check that there is a unique such lift, which we denote simply by $\tau$, for 
which $\tau^{\ell - 1}(\theta) = \theta$. 
Write $\ov{\tau_2} (\theta) = \eta_2 / \theta$ with 
$\eta_2^{\ell} = \alpha \tau_2(\alpha)$ where $\eta_2$ is in 
$K_z$ and indeed $k_z$.
We check that $\ov{\tau_2}^2(\theta) = \theta$ and
$\ov{\tau_2} \sigma(\theta) = \sigma^{-1} \ov{\tau_2}$, so that by rewriting $\ov{\tau_2}$ as $\tau_2$
we see that $N_z/k$ is Galois with Galois group $C_{\ell - 1} \times D_{\ell}$, as required.
Here the choice of lift $\tau_2$ is not uniquely determined: $D_{\ell}$ has
$\ell$ elements of order $2$, corresponding to the $\ell$ conjugate subextensions $L/k$
of degree $\ell$.

In the special case, rewriting $\overline{\tau}$ as $\tau$ we now have $\tau^{\ell - 1} = 1$
regardless of the choice of lift: we have $\tau^{\ell - 1}(\theta) = \zeta_\ell^i \theta$ for some
integer $i$, so that unless $i \equiv 0 \pmod \ell$, $\tau$ is of order $\ell (\ell - 1)$. We already know
that $N_z/k$ is Galois, as the $\tau^r \sigma^s$ are distinct automorphisms of $N_z/k$ for
$0 \leq \tau < \ell - 1, \ 0 \leq \sigma < \ell$. We have already proved that $\Gal(N_z/k)$ is nonabelian,
and in particular noncyclic, hence $i = 0$. So $\tau^{\ell - 1} = 1$ and $\Gal(N_z/k)$ has the
required presentation.

\end{proof}

Recall from \cite{Coh1} the following definition:

\begin{definition} We denote by $V_\ell(K_z)$ the group of
\emph{($\ell$-)virtual units} of $K_z$, in other words the group of
$u\in K_z^*$ such that $u\Z_{K_z}=\q^\ell$ for some ideal $\q$ of
$K_z$, or equivalently such that $\ell\mid v_{\p_z}(u)$ for any prime ideal
$\p_z$ of $K_z$. We define the \emph{($\ell$-)Selmer group} $S_\ell(K_z)$ of
$K_z$ by $S_\ell(K_z)=V_\ell(K_z)/{K_z^*}^\ell$.\end{definition}

The following lemma shows in particular that the Selmer group is finite.
\begin{lemma}\label{lemseq} We have a split exact sequence of 
$\F_\ell[G]$-modules 
$$1\LR\dfrac{U(K_z)}{U(K_z)^\ell}\LR S_\ell(K_z)\LR \Cl(K_z)[\ell]\LR1\;,$$
where the last nontrivial map sends $\ov{u}$ to the ideal class of $\q$ such
that $u\Z_{K_z}=\q^\ell$.
\end{lemma}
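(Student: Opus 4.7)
The plan is to construct the map, verify exactness termwise, and then invoke semisimplicity of $\F_\ell[G]$ for the splitting.

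First I would define the right-hand map. Given $\ov{u} \in S_\ell(K_z)$ with representative $u$, there is a unique fractional ideal $\q$ of $K_z$ with $u\Z_{K_z} = \q^\ell$ by the defining property of virtual units and unique factorization. If we replace $u$ by $u v^\ell$ for some $v \in K_z^*$, then $\q$ is replaced by $\q \cdot (v\Z_{K_z})$, so the class $[\q]$ in $\Cl(K_z)$ is independent of the representative. Moreover $[\q]^\ell = [u\Z_{K_z}] = 1$, so $[\q] \in \Cl(K_z)[\ell]$. The map is $\F_\ell[G]$-equivariant because all Galois elements act on ideals by permuting the factorization, and $\sigma(u)\Z_{K_z} = \sigma(\q)^\ell$.

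Next I would check exactness. For surjectivity, if $[\q] \in \Cl(K_z)[\ell]$ then $\q^\ell = u\Z_{K_z}$ for some $u \in K_z^*$, and by construction $u \in V_\ell(K_z)$ maps to $[\q]$. For the kernel: $\ov{u}$ maps to the trivial class iff $\q = \beta \Z_{K_z}$ for some $\beta \in K_z^*$, iff $u\Z_{K_z} = \beta^\ell \Z_{K_z}$, iff $u = w \beta^\ell$ for some unit $w \in U(K_z)$; this says exactly that $\ov{u}$ lies in the image of $U(K_z)/U(K_z)^\ell$. Finally, for injectivity of the left map, if $w \in U(K_z)$ is an $\ell$th power in $K_z^*$, say $w = \beta^\ell$, then $\beta\Z_{K_z}$ is a fractional ideal whose $\ell$th power is trivial, forcing $\beta \in U(K_z)$ (since $\Cl(K_z)$ has no $\ell$-torsion issue here — the ideal itself must be trivial by unique factorization, not merely its class); hence $w \in U(K_z)^\ell$.

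For the splitting, the key observation is that $|G|$ divides $2(\ell-1)$ in the general case or equals $\ell-1$ in the special case, and is therefore coprime to $\ell$. By Maschke's theorem the group ring $\F_\ell[G]$ is semisimple, so every short exact sequence of $\F_\ell[G]$-modules splits. This yields the claimed splitting of $\F_\ell[G]$-modules without any further work. The only place one needs to be careful is making sure the maps above are genuinely $\F_\ell[G]$-linear, which follows from the equivariance already noted and from the fact that $G$ acts on $U(K_z)/U(K_z)^\ell$ and on $\Cl(K_z)[\ell]$ in the natural way compatible with the inclusion $U(K_z) \hookrightarrow V_\ell(K_z)$. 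I expect the main (minor) obstacle is bookkeeping in checking the well-definedness and equivariance of the connecting map; the splitting itself is essentially free once Maschke is invoked.
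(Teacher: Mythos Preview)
Your proof is correct and follows essentially the same approach as the paper: the paper simply says ``exactness follows from the definitions, and the sequence splits because $\ell \nmid |G|$,'' citing \cite{CoDiOl1} for the splitting. You have spelled out both of these steps in full, with the splitting justified via Maschke's theorem (semisimplicity of $\F_\ell[G]$), which is exactly the content behind the paper's remark that $\ell \nmid |G|$.
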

\begin{proof}
Exactness follows from the definitions, and the sequence splits because $\ell \nmid |G|$
(see for example \cite[Lemma 3.1]{CoDiOl1} for a proof).
\end{proof}

From Lemma \ref{lem24} we extract the following technical result.

\begin{lemma}\label{lem:bij_tech}
Given $t \in T$ and $\alpha \in K_z^*$ such that $t(\alpha)$ is a virtual 
unit, we have $t(\alpha) = \gamma^{\ell} t(u)$ for some $\gamma \in K_z^*$ and
some virtual unit $u$.

Moreover, if $\alpha$ is annihilated modulo ${K_z^*}^{\ell}$ by
$t' \neq t  \in T$, we may choose $u$ to be annihilated by $t'$ in 
$S_{\ell}(K_z)$.
\end{lemma}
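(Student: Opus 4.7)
The plan is to reformulate the statement as a question about the $\F_\ell[G]$-module $M := K_z^*/{K_z^*}^\ell$ and its submodule $S := S_\ell(K_z) = V_\ell(K_z)/{K_z^*}^\ell$. The conclusion $t(\alpha) = \gamma^\ell t(u)$ with $u$ a virtual unit is precisely the statement that $\ov{t(u)} = \ov{t(\alpha)}$ in $M$ for some $\ov{u} \in S$; similarly the refinement amounts to choosing $\ov{u} \in S[t']$.

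For the first part I would use Lemma \ref{lem24}(2) twice. By hypothesis $t(\ov{\alpha}) \in S$; by the lemma applied to $M$, it also lies in $t(M) = M[\i(t)]$. Hence $t(\ov{\alpha})$ lies in $S \cap M[\i(t)] = S[\i(t)]$, and applying the same lemma to the $\F_\ell[G]$-submodule $S$ gives $S[\i(t)] = t(S)$. So there exists $\ov{u} \in S$ with $t(\ov{u}) = t(\ov{\alpha})$, producing the required virtual unit $u$ after lifting.

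For the ``moreover'' clause, note first that in the special case $|T|=1$, so the hypothesis $t' \ne t$ in $T$ cannot hold and there is nothing to prove; assume therefore we are in the general case. Since $\F_\ell[G]$ is commutative, $t$ and $t'$ commute, so $t$ maps $M[t']$ into itself and likewise for $S[t']$, and both $M[t']$ and $S[t']$ are $\F_\ell[G]$-submodules. Commutativity also gives $t'(t(\ov{\alpha})) = t(t'(\ov{\alpha})) = 0$, so $t(\ov{\alpha}) \in M[t']$; combined with $t(\ov{\alpha}) \in S$ we conclude $t(\ov{\alpha}) \in S[t']$. Repeating the first argument with $S$ replaced by the submodule $S[t']$---and using once more that $t(\ov{\alpha}) \in M[\i(t)]$---yields $t(\ov{\alpha}) \in S[t'] \cap M[\i(t)] = S[t'][\i(t)] = t(S[t'])$, so we may indeed take $\ov{u} \in S[t']$.

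The argument is essentially formal once the right reformulation is in place, so I do not foresee a serious obstacle. The only step requiring care is verifying that each submodule to which we invoke Lemma \ref{lem24}(2) is genuinely an $\F_\ell[G]$-module, which reduces to the commutativity of $\F_\ell[G]$.
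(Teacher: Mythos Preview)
Your proof is correct and follows essentially the same approach as the paper: both arguments observe that $\overline{t(\alpha)}$ lies in $S_\ell(K_z)$ and is annihilated by $\iota(t)$ (the paper via Lemma~\ref{lem24}(1), you via Lemma~\ref{lem24}(2) applied to $M$), then apply Lemma~\ref{lem24}(2) to the Selmer group to write it as $t(\overline{u})$, and handle the refinement by passing to the submodule cut out by $t'$. Your write-up is in fact slightly more explicit than the paper's, in particular in noting that the ``moreover'' clause is vacuous in the special case.
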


\begin{proof}
Given $t$ and $\alpha$, (1) of Lemma \ref{lem24} applied to 
$M = K_z^*/{K_z^*}^\ell$ implies that $\iota(t)(t(\alpha)) \in {K_z^*}^{\ell}$.
Since $t(\alpha)$ is a virtual unit, its image $\overline{t(\alpha)}$ is 
annihilated by $\iota(t)$ in the Selmer group. By Lemma \ref{lem24} applied to
$M = S_{\ell}(K_z)$, we have $\overline{t(\alpha)} = t(\overline{\beta})$ for 
some $\overline{\beta} \in S_{\ell}(K_z)$, giving the first result. For the 
second, we replace each of the modules $M$ by $M[t']$: since $t$ and $t'$ 
commute, if $\alpha \in M$ is annhilated by $t'$, so is $t(\alpha)$. 
\end{proof}

\begin{proposition}\label{propbij2}\begin{enumerate}
\item There exists a bijection between elements $L\in\calFL(K)$ and equivalence
classes of $\ell$-tuples $(\a_0,\dots,\a_{\ell-2},\ov{u})$ modulo the
equivalence relation 
$$(\a_0,\dots,\a_{\ell-2},\ov{u})\sim (\a_{-i},\dots,\a_{\ell-2-i},\ov{u^{g^i}})$$
for all $i$ (with the indices of the ideals $\a$ considered modulo $\ell-1$),
where the $\a_i$ and $\ov{u}$ are as follows:
\begin{enumerate}
\item The $\a_i$ are coprime integral squarefree ideals of $K_z$ such that if 
we set $\a=\prod_{0\le i\le \ell-2}\a_i^{g^i}$ then the ideal class of $\a$ 
belongs to $\Cl(K_z)^\ell$, and $\ov{\a}\in (I(K_z)/I(K_z)^\ell)[T]$, where as
usual $I(K_z)$ denotes the group of (nonzero) fractional ideals of $K_z$.
\item $\ov{u}\in S_{\ell}(K_z)[T]$, and in addition $\ov{u}\ne\ov{1}$ when
$\a_i=\Z_{K_z}$ for all $i$.
\end{enumerate}
\item Given $(\a_0,\dots,\a_{\ell-2})$, $\a$, and $\overline{u}$ as in (a), the field $L \in \calFL(K)$ is determined as follows:
There exist an ideal $\q_0$ and an element $\al_0\in K_z$ 
such that $\a\q_0^\ell=\al_0\Z_{K_z}$ with 
$\ov{\al_0}\in (K^*_z/{K^*_z}^\ell)[T]$. Then $L$ is any of the $\ell$ conjugate
degree $\ell$ subextensions of $N_z=K_z(\root\ell\of{\al_0u})$, where $u$ is an arbitrary lift of 
$\ov{u}$.\end{enumerate}\end{proposition}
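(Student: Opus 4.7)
The plan is to derive this from Proposition \ref{propbij1} by decomposing the principal ideal $\alpha\Z_{K_z}$ according to the $\tau$-isotypic structure imposed on $I(K_z)/I(K_z)^\ell$ by the annihilation condition. Given a representative $\alpha$ of $\ov{\alpha}\in(K_z^*/{K_z^*}^\ell)[T]$, for each prime $\mathfrak{p}_z$ of $K_z$ let $n_{\mathfrak{p}_z}\in\{0,1,\ldots,\ell-1\}$ denote the residue of $v_{\mathfrak{p}_z}(\alpha)$ modulo $\ell$, and for $0\le i\le\ell-2$ let $\mathfrak{a}_i$ be the product of all primes $\mathfrak{p}_z$ with $n_{\mathfrak{p}_z}\equiv g^i\pmod\ell$. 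These are coprime squarefree integral ideals, and $\mathfrak{a}:=\prod_i\mathfrak{a}_i^{g^i}$ satisfies $\alpha\Z_{K_z}=\mathfrak{a}\mathfrak{q}^\ell$ for some fractional ideal $\mathfrak{q}$; in particular $[\mathfrak{a}]\in\Cl(K_z)^\ell$. The hypothesis that $t(\alpha)\in{K_z^*}^\ell$ for each $t\in T$ immediately yields $t(\mathfrak{a})\in I(K_z)^\ell$, so $\ov{\mathfrak{a}}\in(I(K_z)/I(K_z)^\ell)[T]$, verifying condition (a).

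For the reverse direction, a tuple $(\mathfrak{a}_0,\ldots,\mathfrak{a}_{\ell-2})$ satisfying (a) produces some ideal $\mathfrak{q}_0$ and element $\alpha_0\in K_z^*$ with $\mathfrak{a}\mathfrak{q}_0^\ell=\alpha_0\Z_{K_z}$, since $[\mathfrak{a}]\in\Cl(K_z)^\ell$. To match Proposition \ref{propbij1} we must upgrade this to $\ov{\alpha_0}\in(K_z^*/{K_z^*}^\ell)[T]$. For every $t\in T$, the condition $\ov{\mathfrak{a}}\in(I(K_z)/I(K_z)^\ell)[T]$ shows that $t(\alpha_0)\Z_{K_z}$ is an $\ell$th power ideal, so $t(\alpha_0)$ is a virtual unit. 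In the general case we apply Lemma \ref{lem:bij_tech} twice: first with $t=\tau_2+1$ to obtain a virtual unit $v_1$ with $\alpha_0/v_1$ annihilated modulo $\ell$th powers by $\tau_2+1$, then with $t=\tau-g$ and $t'=\tau_2+1$ (invoking the ``moreover'' clause) to obtain a virtual unit $v_2$, annihilated by $\tau_2+1$ in $S_\ell(K_z)$, such that $\alpha_0/(v_1v_2)$ is annihilated modulo $\ell$th powers by both elements of $T$. Since each $v_i\Z_{K_z}$ is an $\ell$th power, absorbing it into $\mathfrak{q}_0$ preserves the relation $\mathfrak{a}\mathfrak{q}_0^\ell=\alpha_0\Z_{K_z}$. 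The special case requires only a single application of the lemma.

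Any $\alpha$ giving rise to the same ideal data satisfies $(\alpha/\alpha_0)\Z_{K_z}\in I(K_z)^\ell$, so $u:=\alpha/\alpha_0$ is a virtual unit. The $G$-equivariance of the inclusion $S_\ell(K_z)\hookrightarrow K_z^*/{K_z^*}^\ell$, combined with $\ov{\alpha},\ov{\alpha_0}\in[T]$, forces $\ov{u}\in S_\ell(K_z)[T]$. The condition $\ov{\alpha}\neq\ov{1}$ of Proposition \ref{propbij1} is automatic whenever some $\mathfrak{a}_i\neq\Z_{K_z}$ (since then $\alpha_0$ is not a virtual unit) and otherwise becomes exactly $\ov{u}\neq\ov{1}$, matching condition (b). Part (2) is then immediate from Proposition \ref{propbij1}(2) applied to $\alpha_0u$.

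Finally, the equivalence $\alpha\sim\alpha^{g^i}$ of Proposition \ref{propbij1} multiplies each $v_{\mathfrak{p}_z}(\alpha)$ by $g^i$, moving each $\mathfrak{p}_z\in\mathfrak{a}_j$ to $\mathfrak{a}_{i+j}$, so $\mathfrak{a}_k^{\text{new}}=\mathfrak{a}_{k-i}^{\text{old}}$. Choosing $\alpha_0^{\text{new}}=\alpha_0^{g^i}$ (which has the correct ideal factorization) then forces $u^{\text{new}}=u^{g^i}$, matching the stated equivalence, while different choices of $\mathfrak{q}_0$ or of lifts of $\ov{u}$ alter $\alpha$ only by ${K_z^*}^\ell$. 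The main obstacle is the simultaneous annihilation step in the general case, which is precisely what the ``moreover'' clause of Lemma \ref{lem:bij_tech} is designed to handle.
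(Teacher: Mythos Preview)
Your proof is correct and follows essentially the same approach as the paper's: decompose $\alpha\Z_{K_z}$ into the $\mathfrak{a}_i$, invoke Lemma~\ref{lem:bij_tech} to produce an $\alpha_0$ with $\ov{\alpha_0}\in(K_z^*/{K_z^*}^\ell)[T]$, set $u=\alpha/\alpha_0$, and verify the equivalence relation. You are more explicit than the paper about how Lemma~\ref{lem:bij_tech} is used (the paper merely asserts the existence of $\alpha_0$, relying on the lemma having been placed immediately before), and your two-step application in the general case is exactly what the lemma is designed for.
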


\begin{proof} Given $L$, associate $N_z=K_z(\root\ell\of\al)$ as in Proposition \ref{propbij1}. We may write uniquely
$\al\Z_{K_z}=\prod_{0\le i\le \ell-2}\a_i^{g^i}\q^{\ell}$, where the
$\a_i$ are coprime integral squarefree ideals of $K_z$, and they must satisfy the conditions of (a).

Each $\a$ which thus occurs satisfies $\a \q^\ell = \al_0 \Z_{K_z}$ for some
$\al_0$ with $\ov{\al_0} \in (K^*_z/{K^*_z}^\ell)[T]$, and for each $\a$ we arbitrarily associate such an $\al_0$.
Given $\a \q^\ell = \al \Z_{K_z}$, $u := \al / \al_0$ is a virtual unit; writing $\overline{u}$ for its class
in $S_\ell(K_z)$, $\overline{u}$ is annhiliated by $T$ because both $\overline{\al}$ and $\overline{\al_0}$ are.

This establishes the bijection, and we conclude by observing the following:
\begin{itemize}
\item The elements
$\al$ and $\be$ give
equivalent extensions if and only if $\be=\al^{g^i}\ga^\ell$ for some
element $\ga$ and some $i$ modulo $\ell-1$, and then if 
$\al_0\Z_{K_z}=\prod_j\a_j^{g^j}\q^\ell$ and $\al=\al_0u$, we have on the
one hand $\be\Z_{K_z}=\prod_j\a_{j-i}^{g^j}\q_1^\ell$ for some ideal $\q_1$,
so the ideals $\a_j$ are permuted cyclically, and on the other hand
$\be=(\al_0u)^{g^i}\ga^{\ell}=\al_0^{g^i}u^{g^i}\ga^\ell$, so $\ov{u}$
is changed into $\ov{u}^{g^i}$, giving the equivalence described in (1).
\item
The only fixed point of the transformation
$(\a_0,\dots,a_{\ell-2},\ov{u})\mapsto(\a_{\ell-2},\a_0,\dots,\a_{\ell-3},\ov{u^g})$
is obtained with all the $\a_i$ equal and $\ov{u}=\ov{u^g}$, but since 
the $\a_i$ are pairwise coprime this means that they are all equal to
$\Z_{K_z}$, and $\ov{u}=\ov{u^{g^i}}$ for all $i$, and so $\ov{u}=1$.
\end{itemize}
\end{proof}

\begin{remark}\label{remclk} Note that condition (a) implies that $\ov{\a}\in(\Cl(K_z)/\Cl(K_z)^\ell)[T]$, and for any 
modulus $\m$ coprime to $\a$ also that
$\ov{\a}\in(\Cl_{\m}(K_z)/\Cl_{\m}(K_z)^\ell)[T]$.
\end{remark}

\begin{lemma}\label{lemconda1} Keep the above notation, and in 
particular recall that $\a=\prod_{0\le i\le\ell-2}\a_i^{g^i}$. The 
condition $\ov{\a}\in (I(K_z)/I(K_z)^\ell)[T]$ is equivalent to the 
following:
\begin{enumerate}\item In the general case $\tau(\a_i)=\a_{i-1}$ 
(equivalently, $\a_i=\tau^{-i}(\a_0)$), and 
$\tau^{(\ell-1)/2}(\a_0)=\tau_2(\a_0)$.
\item In the special case $\tau(\a_i)=\a_{i+(\ell-3)/2}$, so that
$\a_{2i}=\tau^{-2i}(\a_0)$ and $\a_{2i+1}=\tau^{-2i}(\a_1)$, with
the following conditions on $(\a_0,\a_1)$:
\begin{itemize}\item If $\ell\equiv1\pmod4$ then
$\a_1=\tau^{(\ell-3)/2}(\a_0)$, or equivalently 
$\a_0=\tau^{(\ell+1)/2}(\a_1)$.
\item If $\ell\equiv3\pmod4$ then $\tau^{(\ell-1)/2}(\a_0)=\a_0$
and $\tau^{(\ell-1)/2}(\a_1)=\a_1$.
\end{itemize}\end{enumerate}
\end{lemma}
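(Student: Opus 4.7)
The plan is to reduce the annihilation condition $\overline{\a}\in (I(K_z)/I(K_z)^\ell)[T]$ to a prime-by-prime valuation condition. For each $t\in T$, the condition $t(\overline{\a})=\overline{1}$ means $v_{\mathfrak{P}}(t(\a))\equiv 0\pmod\ell$ for every prime $\mathfrak{P}$ of $K_z$. Since the $\a_i$ are coprime and squarefree and $\tau$ is an automorphism, for any $\mathfrak{P}$ both sequences $v_{\mathfrak{P}}(\a_i)$ and $v_{\mathfrak{P}}(\tau(\a_i))$ take values in $\{0,1\}$ with at most one nonzero entry. This turns each condition into a short case analysis, in which the fact that $g$ is a primitive root modulo $\ell$ forces any nontrivial contributions to pair up at specific indices.

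For the general case, first expand $\tau(\a)\a^{-g}=\prod_i(\tau(\a_i)/\a_{i-1})^{g^i}$. The only way $\sum_i g^i(v_{\mathfrak{P}}(\tau(\a_i))-v_{\mathfrak{P}}(\a_{i-1}))$ can vanish mod $\ell$ is that either all valuations are zero or $\mathfrak{P}\mid\a_{m-1}$ and $\mathfrak{P}\mid\tau(\a_m)$ for a single $m$ (any asymmetric case yields $\pm g^j\equiv 0$, impossible). Translating this back to ideals gives $\tau(\a_i)=\a_{i-1}$, equivalently $\a_i=\tau^{-i}(\a_0)$. Using this, the condition $(\tau_2+1)\overline{\a}=\overline{1}$ becomes a valuation equation whose nontrivial solutions force $\mathfrak{P}\mid\a_m$ and $\mathfrak{P}\mid\tau_2(\a_n)$ with $g^m+g^n\equiv 0\pmod\ell$, i.e.\ $m-n\equiv(\ell-1)/2$. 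Since $\tau_2$ commutes with $\tau$ in the general case, $\tau_2(\a_n)=\tau^{-n}\tau_2(\a_0)$, and unwinding gives $\tau_2(\a_0)=\tau^{(\ell-1)/2}(\a_0)$.

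For the special case, run the same analysis on $\tau(\a)\a^g=\prod_i\tau(\a_i)^{g^i}\prod_j\a_{j-1}^{g^j}$. The valuation sum $\sum_i g^i(v_{\mathfrak{P}}(\tau(\a_i))+v_{\mathfrak{P}}(\a_{i-1}))$ vanishes mod $\ell$ precisely when either everything is zero or, for a single $j$, $g^{m-n}\equiv -1\equiv g^{(\ell-1)/2}$, which rearranges (after an index shift $i\mapsto i-(\ell-3)/2$) to $\tau(\a_i)=\a_{i+(\ell-3)/2}$. Applying this identity twice yields $\tau^{-2}(\a_i)=\a_{i+2}$, hence $\a_{2i}=\tau^{-2i}(\a_0)$ and $\a_{2i+1}=\tau^{-2i}(\a_1)$, which exhausts the indices (since the even and odd residues each form a full $\tau^{-2}$-orbit).

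Finally, the split by $\ell\bmod 4$ arises as the compatibility condition between $\tau(\a_0)=\a_{(\ell-3)/2}$ and the parametrization $\a_{2i}=\tau^{-2i}(\a_0)$, $\a_{2i+1}=\tau^{-2i}(\a_1)$. When $\ell\equiv 1\pmod 4$, $(\ell-3)/2$ is odd so $\a_{(\ell-3)/2}$ is odd-indexed; writing $(\ell-3)/2=2k+1$ gives $\tau(\a_0)=\tau^{-2k}(\a_1)$, i.e.\ $\a_1=\tau^{(\ell-3)/2}(\a_0)$. When $\ell\equiv 3\pmod 4$, $(\ell-3)/2$ is even and $\a_{(\ell-3)/2}$ lies in the $\tau^{-2}$-orbit of $\a_0$; writing $(\ell-3)/2=2k$ gives $\tau(\a_0)=\tau^{-2k}(\a_0)$, i.e.\ $\tau^{(\ell-1)/2}(\a_0)=\a_0$, and the same argument starting from $\tau(\a_1)=\a_{1+(\ell-3)/2}$ yields $\tau^{(\ell-1)/2}(\a_1)=\a_1$. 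The main obstacle is purely bookkeeping: tracking index shifts modulo $\ell-1$, correctly handling the two parity regimes, and making sure the prime-by-prime case analysis is exhaustive—none of this is deep, but it is easy to slip a sign.
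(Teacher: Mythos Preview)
Your proof is correct and follows essentially the same route as the paper's. The only difference is packaging: the paper observes that the expression $\a=\prod_i\a_i^{g^i}$ with coprime squarefree $\a_i$ is the \emph{canonical} decomposition modulo $\ell$th powers, so that comparing $\tau(\a)=\prod_i\tau(\a_i)^{g^i}$ with $\a^g=\prod_i\a_{i-1}^{g^i}$ (and similarly for $\tau_2$ and for $\a^{-g}$ in the special case) forces $\tau(\a_i)=\a_{i-1}$ etc.\ by uniqueness; your prime-by-prime valuation analysis is exactly the content of that uniqueness, unrolled.
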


\begin{proof} Since $\tau(\a)=\prod_i\tau(\a_i)^{g^i}$ and the 
$\tau(\a_i)$ are integral, squarefree and coprime ideals, this is the 
canonical decomposition of $\tau(\a)$ (up to $\ell$th powers). On the 
other hand $\a^g=\prod_i\a_{i-1}^{g^i}$. Assume first that we are in
the general case. Since $\tau(\a)/\a^g$ is an $\ell$th power,
by uniqueness of the decomposition we deduce that $\tau(\a_i)=\a_{i-1}$.
A similar proof using that $g^{(\ell-1)/2}\equiv-1\pmod\ell$ shows that
$\tau_2(\a_i)=\a_{i+(\ell-1)/2}$, and putting everything together proves
(1). Assume now that we are in the special case, so that 
$\tau(\a)/\a^{-g}$ is an $\ell$th power. Since 
$-g\equiv g^{(\ell+1)/2}\pmod{\ell}$, the same reasoning shows that
$\tau(\a_i)=\a_{i-(\ell+1)/2}=\a_{i+(\ell-3)/2}$, so in particular
$\tau^2(\a_i)=\a_{i-(\ell+1)}=\a_{i-2}$, and the other formulas
follow immediately.\end{proof}

\begin{definition}\label{defcald} Let $\calD$ (resp., $\calD_\ell$) 
be the set of all prime ideals $p$ of $k$ with $p\nmid\ell$ (resp., with
$p\mid\ell$) such that the prime ideals $\p$, $\p_z$, and (in the general case)
$\p_k$
of $K$, $K_z$, and $k_z$  above $p$
satisfy the following conditions:

\begin{enumerate}\item In all cases $\p$ is totally split in the extension 
$K_z/K$.
\item In the general case $\p_k$ is split in the quadratic extension 
$K_z/k_z$.
\item In the special case with $\ell\equiv1\pmod4$, $p$ is totally split in the extension
$K_z/k$ (equivalently $p$ is split in the quadratic extension $K/k$).
\end{enumerate}

\end{definition}

Note that these conditions are independent of the choices of $\p$, $\p_z$, and $\p_k$ above
any particular $p$.

\begin{corollary}\label{corsplit1} If $\p_z$ is a prime ideal of $K_z$ 
dividing some $\a_i$, above a prime $p$ of $k$, then $p \in \calD \cap \calD_\ell$.
\end{corollary}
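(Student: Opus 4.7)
The strategy is to exploit the Galois-equivariance described in Lemma \ref{lemconda1} together with the pairwise coprimality of the $\a_i$ to produce many distinct Galois conjugates of $\p_z$, which then translate directly into the required splitting conditions. Throughout, I would make use of the elementary observation: if $\p_z \mid \a_i$, then by pairwise coprimality and squarefreeness we have $\p_z \nmid \a_j$ for every $j \not\equiv i$, so whenever $\sigma \in \Gal(K_z/k)$ moves the index $i$ to a different index $j$, we have $\sigma(\p_z) \neq \p_z$.

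In the general case, Lemma \ref{lemconda1}(1) gives $\tau(\a_i) = \a_{i-1}$, so $\tau^j(\p_z) \mid \a_{i-j}$ for $j = 0, \ldots, \ell-2$, yielding $\ell - 1$ pairwise distinct conjugates. Since $\Gal(K_z/K) = \langle \tau \rangle$ has order $\ell - 1$, the decomposition group of $\p_z$ over $\p$ is trivial, so $\p$ is totally split in $K_z/K$ (condition (1) of $\calD$). For condition (2), I would note that $\tau_2(\a_i) = \a_{i+(\ell-1)/2} \neq \a_i$, whence $\tau_2(\p_z) \neq \p_z$ and $\p_k$ therefore splits in the quadratic extension $K_z/k_z$.

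In the special case, the analogous argument using $\tau^2(\a_i) = \a_{i-2}$ (a consequence of $\tau(\a_i) = \a_{i+(\ell-3)/2}$) produces $(\ell-1)/2$ distinct conjugates of $\p_z$ under $\Gal(K_z/K) = \langle \tau^2 \rangle$, forcing $\p$ to be totally split in $K_z/K$. For condition (3), which is only imposed when $\ell \equiv 1 \pmod 4$, I would verify the elementary identity $\gcd((\ell-3)/2, \ell-1) = 1$ in this case (using that $(\ell-3)/2$ and $(\ell+1)/2$ are both odd, and differ by $2$). This identity implies that the map $i \mapsto i + (\ell-3)/2 \pmod{\ell-1}$ is a single $(\ell-1)$-cycle, so all $\ell - 1$ conjugates $\tau^j(\p_z)$ under $\Gal(K_z/k) = \langle \tau \rangle$ are distinct; hence $p$ is totally split in $K_z/k$, and in particular in the quadratic subextension $K/k$.

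Since none of these arguments depend on whether $p \mid \ell$, the same splitting conditions hold either way, and $p$ lies in $\calD$ or in $\calD_\ell$ according to its residue. The only step that is not immediate from Lemma \ref{lemconda1} plus coprimality is the $\gcd$ computation in the special subcase $\ell \equiv 1 \pmod 4$, which I expect to be the only genuine point of the argument; everything else reduces to the same "distinct ideals $\Rightarrow$ distinct primes $\Rightarrow$ trivial decomposition group" principle.
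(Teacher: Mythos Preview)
Your proof is correct and follows essentially the same approach as the paper: use Lemma~\ref{lemconda1} plus coprimality of the $\a_i$ to produce enough distinct Galois conjugates of $\p_z$, forcing the relevant decomposition groups to be trivial, with the gcd computation $\gcd((\ell-3)/2,\ell-1)=1$ for $\ell\equiv1\pmod4$ as the one nontrivial ingredient. The only cosmetic difference is that in the special case you verify condition~(1) directly via $\Gal(K_z/K)=\langle\tau^2\rangle$ and then condition~(3) separately, whereas the paper works throughout with the decomposition group in $\Gal(K_z/k)$ and deduces condition~(1) from the structure of $ef$; your organization is arguably a bit cleaner.
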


\begin{proof} Assume first that we are in the general case. Then $\tau$ acts transitively
on the $\mfa_i$, all of which are squarefree and coprime, and so any $\mfp$ dividing $\mfa_i$ must have
$\ell - 1$ nontrivial conjugates (including $\mfp$ itself), establishing (1). Similarly, 
$\tau_2(\a_i)=\a_{i+(\ell-1)/2}$, and for the same reason 
the prime ideals of $K_z$ dividing the $\a_i$ 
come from prime ideals $\p_k$ of $k_z$ which split in $K_z/k_z$.

In the special case, if $p$ splits as a product of
$h$ conjugate ideals in $K_z$, the decomposition group 
$D(\p_z/p)$ has cardinality $ef=(\ell-1)/h$ hence is the subgroup of 
$\Gal(K_z/k)$ generated by $\tau^h$ since $[K_z:k]=\ell-1$. Since 
$\tau^h(\a_i)=\a_{(\ell-3)h/2+i}$ and $\tau^h$ fixes $\p_z$, it follows as 
before that $(\ell-1)\mid(\ell-3)h/2$. Now evidently $(\ell-1,(\ell-3)/2)$
is equal to $1$ if $\ell\equiv1\pmod4$ and to $2$ if $\ell\equiv3\pmod4$. Thus
when $\ell\equiv1\pmod4$ we deduce as above that $(\ell-1)\mid h$ hence that 
$e=f=1$, so that $p$ is totally split in $K_z/k$. On the other hand if 
$\ell\equiv3\pmod4$ we only have $(\ell-1)/2\mid h$. If $h=\ell-1$ then $p$ is
again totally split. On the other hand, if $h=(\ell-1)/2$ then $ef=2$, so
$p$ is either inert or ramified in the quadratic extension $K/k$, so
$\p$ is totally split in $K_z/K$.\end{proof}

\subsection{The Mirror Field}
We now introduce the {\itshape mirror field} of $K$. When $\ell = 3$ this notion is
classical and well-known; the mirror field of $\Q(\sqrt{D})$ is $\Q(\sqrt{-3D})$ and the
{\itshape Scholz reflection principle} establishes that the $3$-ranks of their class groups
differ by at most $1$. 

In the case $\ell > 3$ this notion is less well known but 
does appear in the literature (see for instance the works
of G.~Gras \cite{Gra1} and \cite{Gra2}), and in
particular Scholz's theorem can be generalized to this context, see for instance
\cite{Kis} for the case $\ell=5$.

\begin{definition} In the general case, we define the \emph{mirror field}
$K'$ of $K$ (implicitly, with respect to the prime $\ell$) to be the 
degree $\ell-1$ subextension of $K_z/k$ fixed by $\tau^{(\ell-1)/2}\tau_2$.
\end{definition}

We do not define the mirror field for the special case, so in this subsection we assume that we are in the general case.

\begin{lemma}\label{lemmirror} Write $K=k(\sqrt{D})$ for some $D\in k^*\setminus {k^*}^2$.
\begin{enumerate}
\item The extension $K'/k$ is cyclic of degree $\ell-1$, and $K'=k\bigl(\sqrt{D}(\ze-\ze^{-1})\bigr)$.
\item The field $K'$ is a quadratic extension of $k(\ze+\ze^{-1})$,
more precisely $$K'=k(\ze+\ze^{-1})\Bigl(\sqrt{-D(4-\al^2)}\Bigr)\;,$$ 
where $\al=\ze+\ze^{-1}$. 
\end{enumerate}
\end{lemma}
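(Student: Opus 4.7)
The plan is to work directly from the definition of $K'$ as the fixed field of the order-two element $\rho := \tau^{(\ell-1)/2}\tau_2 \in G$.

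First, I would establish part (1). Because $G$ is abelian, every subgroup is normal, so $K'/k$ is Galois with $\Gal(K'/k) \cong G/\langle \rho\rangle$. Writing $G = \langle \tau\rangle \times \langle \tau_2\rangle$ with $|\tau|=\ell-1$ and $|\tau_2|=2$, the relation $\rho=1$ identifies $\tau_2$ with $\tau^{(\ell-1)/2}$, so the quotient is cyclic of order $\ell-1$, generated by the image of $\tau$. In particular $[K':k]=\ell-1$. Next, set $\beta=\sqrt{D}(\ze-\ze^{-1})\in K_z$. Since $\tau(\sqrt{D})=\sqrt{D}$, $\tau_2(\sqrt{D})=-\sqrt{D}$, $\tau_2(\ze)=\ze$, and $\tau^{(\ell-1)/2}(\ze)=\ze^{g^{(\ell-1)/2}}=\ze^{-1}$ (using $g^{(\ell-1)/2}\equiv-1\pmod\ell$), direct computation gives
\[
\tau^{(\ell-1)/2}(\beta)=\sqrt{D}(\ze^{-1}-\ze)=-\beta,\qquad \tau_2(\beta)=-\beta,
\]
so $\rho(\beta)=\beta$. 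Hence $k(\beta)\subseteq K'$.

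To upgrade this to equality, I would compute $\beta^2=D(\ze-\ze^{-1})^2=D(\al^2-4)=-D(4-\al^2)$, where $\al=\ze+\ze^{-1}$. Thus $k(\beta)\supseteq k(\al)$, and $[k(\beta):k(\al)]\le 2$. To rule out equality with $k(\al)$, observe $k(\al)\subset k_z$, so it suffices to show $\beta\notin k_z$; and since $\ze-\ze^{-1}\in k_z^*$, this reduces to $\sqrt{D}\notin k_z$. But in the general case $[K_z:K]=\ell-1=[k_z:k]$ gives $K\cdot k_z=K_z$ with $[K_z:k_z]=2$, so $K\not\subset k_z$ and $\sqrt{D}\notin k_z$. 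Therefore $[k(\beta):k(\al)]=2$, giving $[k(\beta):k]=\ell-1=[K':k]$ and hence $K'=k(\beta)$.

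Part (2) now falls out immediately: from the chain $k\subset k(\al)\subset K'=k(\beta)$ and the identity $\beta^2=-D(4-\al^2)\in k(\al)$, we read off $K'=k(\ze+\ze^{-1})\bigl(\sqrt{-D(4-\al^2)}\bigr)$ as a quadratic extension of $k(\ze+\ze^{-1})$.

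The only non-mechanical step is the argument that $k(\beta)\neq k(\al)$, which is where the hypothesis of being in the general case is genuinely used; everything else is a formal Galois-theoretic computation using the explicit actions of $\tau$ and $\tau_2$ on $\sqrt{D}$ and $\ze$.
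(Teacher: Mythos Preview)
Your proof is correct and follows essentially the same approach as the paper, which simply declares the result ``straightforward'' and records only the identity $-D(4-\al^2)=D(\ze-\ze^{-1})^2$. You have filled in the details the paper omits, in particular the degree count showing $k(\beta)=K'$ and not merely $k(\beta)\subseteq K'$.
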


\begin{proof} Straightforward; for (2), note that 
$-D(4-\alpha^2) = D(\ze-\ze^{-1})^2$.
\end{proof}

The point of introducing the mirror field is the following result:

\begin{proposition}\label{propkp} Assume that we are in the general case.
As before, let $p$ be a prime ideal of $k$, $\p_z$ an ideal of $K_z$ above 
$p$, and $\p_k$ and $\p$ the prime ideals below $\p_z$ in $k_z$ and $K$ 
respectively. The following are equivalent:
\begin{enumerate}\item The ideals $\p_k$ and $\p$ are both totally split in 
$K_z/k_z$ and $K_z/K$ respectively (in other words $p\in\calD\cup\calD_\ell$).
\item The ideal $p$ is totally split in $K'/k$.
\end{enumerate}
In particular (by Corollary \ref{corsplit1}), (1)-(2) are true if $\p_z$ divides some $\a_i$.
Moreover, these conditions imply that exactly one of the following is true:
\begin{enumerate}[\ \ \ \ (a)]\item $p$ is split in $K/k$ and totally split in $k_z/k$.
\item $p$ is inert in $K/k$ and split in $k_z/k$ as a product of $(\ell-1)/2$
prime ideals of degree $2$.
\item $p$ is above $\ell$, is ramified in $K/k$, and its absolute
ramification index $e(p/\ell)$ is an odd multiple of $(\ell-1)/2$ (equivalently
$e(\p/\ell)$ is an odd multiple of $\ell-1$).
\end{enumerate}
\end{proposition}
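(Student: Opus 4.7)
The plan is to translate each of (1) and (2) into a statement about the decomposition group $D := D(\mathfrak{p}_z/p)$ and inertia group $I \subseteq D$ inside $G = \Gal(K_z/k) = \langle\tau\rangle \times \langle\tau_2\rangle$, where $\langle\tau\rangle = \Gal(K_z/K)$ and $\langle\tau_2\rangle = \Gal(K_z/k_z)$. Condition (2) becomes $D \subseteq \Gal(K_z/K') = \langle\tau^{(\ell-1)/2}\tau_2\rangle$, a subgroup of order $2$; condition (1) unpacks to $D \cap \langle\tau\rangle = \{1\}$ together with $D \cap \langle\tau_2\rangle = \{1\}$. The implication (1) $\Rightarrow$ (2) then reduces to a small group-theoretic observation: any nontrivial element of $D$ must have the shape $\tau^i \tau_2$ with $i \neq 0$, and its square $\tau^{2i}$ lies in $D \cap \langle\tau\rangle = \{1\}$, forcing $2i \equiv 0 \pmod{\ell-1}$ and hence $i = (\ell-1)/2$. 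The converse is immediate, since $\tau^{(\ell-1)/2}\tau_2$ lies in neither $\langle\tau\rangle$ nor $\langle\tau_2\rangle$.

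For the moreover statement, I would split into cases according to $|D|$ and whether $I = D$. The case $D = 1$ yields (a) directly. If $|D| = 2$ and $I = 1$, then projecting $D$ into $\Gal(K/k)$ and $\Gal(k_z/k)$ gives images $\langle\tau_2\rangle$ and $\langle\tau^{(\ell-1)/2}\rangle$ of order $2$, so $p$ is inert in $K/k$ and splits in $k_z/k$ as $(\ell-1)/2$ primes of residue degree $2$, which is case (b). If $I = D$ has order $2$, then both $K/k$ and $k_z/k$ are ramified at $p$; since $k_z = k(\zeta_\ell)$ is unramified outside $\ell$, this forces $p \mid \ell$, placing us in case (c) and giving $e(\mathfrak{p}/p) = 2$.

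The main obstacle is the odd-multiple assertion in (c). Since $g^{(\ell-1)/2} \equiv -1 \pmod\ell$, the automorphism $\tau^{(\ell-1)/2}$ fixes $\zeta_\ell + \zeta_\ell^{-1}$, and the fact that $D$ surjects onto $\langle\tau^{(\ell-1)/2}\rangle$ in $\Gal(k_z/k)$ forces $k_p \supseteq F := \Q_\ell(\zeta_\ell + \zeta_\ell^{-1})$, a totally ramified extension of $\Q_\ell$ of degree $(\ell-1)/2$. Hence $(\ell-1)/2 \mid e(p/\ell)$, and it remains to prove that $e := e(k_p/F) = e(p/\ell)/((\ell-1)/2)$ is odd. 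For this I would argue locally: $K_\mathfrak{p} = k_p(\zeta_\ell)$ is a ramified quadratic extension of $k_p$ (since $\mathfrak{p}$ is ramified in $K/k$), while on the other hand $\Q_\ell(\zeta_\ell) = F(\sqrt{\pi_F u})$ for some uniformizer $\pi_F$ of $F$ and some unit $u$. If $e$ were even, then $v_{k_p}(\pi_F u) = e$ would be even as well, and the standard classification of quadratic extensions in odd residue characteristic would make $k_p(\sqrt{\pi_F u})/k_p$ either trivial or unramified, contradicting the ramification of $K_\mathfrak{p}/k_p$. Hence $e$ is odd. The final clause of the proposition (the case $\mathfrak{p}_z \mid \mathfrak{a}_i$) is immediate from Corollary \ref{corsplit1} and the equivalence just proved.
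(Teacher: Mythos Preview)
Your argument for the equivalence (1) $\Leftrightarrow$ (2) is essentially identical to the paper's: both reduce to the squaring trick in $G = \langle\tau\rangle \times \langle\tau_2\rangle$ showing $D(\p_z/p) \subseteq \{1, \tau^{(\ell-1)/2}\tau_2\}$. Your treatment of cases (a) and (b) via the size of $D$ and $I$ is likewise the same as the paper's, just phrased in terms of the decomposition group rather than the splitting type in $K/k$.

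For case (c), however, you take a genuinely different route. The paper invokes Lemma~\ref{condkz} (the conductor formula $\f(K_z/K) = \prod_{(\ell-1)\nmid e(\p/\ell)} \p$) to conclude that a prime ramified in $K/k$ under hypothesis (2) must satisfy $(\ell-1)\nmid e(p/\ell)$, and then uses \eqref{eq:pzp} together with condition (1) (which forces $e(\p_z/\p)=1$, hence $(\ell-1)\mid e(\p/\ell)=2e(p/\ell)$) to deduce immediately that $e(p/\ell)=n(\ell-1)/2$ with $n$ odd. Your argument is instead a self-contained local computation: identify $K_\p = k_p(\zeta_\ell)$ via the decomposition group, embed $F=\Q_\ell(\zeta_\ell+\zeta_\ell^{-1})$ in $k_p$, write $\Q_\ell(\zeta_\ell)=F(\sqrt{\pi_F u})$, and use the classification of quadratic extensions in odd residue characteristic to rule out even $e(k_p/F)$. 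Both are correct. The paper's argument is shorter once the machinery from \cite{CoDiOl1} and \cite{CoDiOl3} is in hand; yours is more elementary and avoids those external inputs entirely.
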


\begin{proof} (1) if and only if (2): We see that any nontrivial elements of
$D(\p_z/p)$ must be of the form $\tau^i\tau_2$ with $i \not \equiv 0 \pmod{\ell - 1}$, 
and squaring we have $\tau^{2i}\in D(\p_z/p)$, so $2i\equiv0\pmod{\ell-1}$,
so $D(\p_z/p)\subset\{1,\tau^{(\ell-1)/2}\tau_2\}$, yielding (2).  
The converse is proved similarly.

\smallskip

To prove the last statement, first recall from \cite{CoDiOl1} the following 
result:

\begin{lemma}\label{condkz} Let $K$ be any number field and $K_z=K(\ze)$. The 
conductor of the extension $K_z/K$ is given by the formula
$$\f(K_z/K)=\prod_{\substack{\p\mid\ell\\(\ell-1)\nmid e(\p/\ell)}}\p\;.$$
\end{lemma}

It follows in particular that if $p\nmid\ell$, or if $p\mid\ell$ and 
$(\ell-1)\mid e(p/\ell)$ then $p$ is unramified in $k_z/k$, and therefore
also (arguing via inertia groups) in $K/k$, since otherwise the ideal $\p_k$ would be 
ramified in $K_z/k_z$. Thus, assuming (2), the only prime ideals $p$ which can
be ramified in $K/k$ are with $p\mid\ell$ and $(\ell-1)\nmid e(p/\ell)$.

If $p$ is split or inert in $K/k$, we check that $f(\p_z | p)$ equals $1$ or $2$ respectively, showing (a) and (b).
If $p$ is ramified, then \eqref{eq:pzp} implies that
$(\ell-1)\mid e(\p/\ell)=e(\p/p)e(p/\ell)$. Since
$(\ell-1)\nmid e(p/\ell)$ we conclude that $e(p/\ell)=n(\ell-1)/2$ with $n$ odd.
\end{proof}

The following corollaries are immediate:
\begin{corollary}\label{ramell} Let $p$ be a prime ideal of $k$ below a prime
ideal $\p_z$ of $K_z$ dividing some $\a_i$ as defined above. If $p$ is ramified in the quadratic 
extension $K/k$ then $p$ is above $\ell$.\end{corollary}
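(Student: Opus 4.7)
The plan is to deduce this immediately from Proposition \ref{propkp}. By Corollary \ref{corsplit1}, the hypothesis that $\p_z$ divides some $\a_i$ forces $p\in\calD\cup\calD_\ell$, so the equivalent conditions (1)–(2) of Proposition \ref{propkp} hold for $p$. Consequently, exactly one of the three alternatives (a), (b), (c) listed at the end of Proposition \ref{propkp} is true.

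In alternatives (a) and (b), $p$ is respectively split or inert in $K/k$, hence unramified. Only alternative (c) allows $p$ to be ramified in $K/k$, and it explicitly requires $p$ to lie above $\ell$ (with ramification index $e(p/\ell)$ an odd multiple of $(\ell-1)/2$). Thus if $p$ is ramified in $K/k$, we must be in case (c), and in particular $p\mid\ell$. So there is no independent work to do beyond invoking Proposition \ref{propkp}; the corollary is simply the negation of the possibility of ramification in cases (a) and (b). The only potential subtlety is that Proposition \ref{propkp} is stated in the general case, but Corollary \ref{ramell} inherits this assumption implicitly through the mirror field setup of this subsection.
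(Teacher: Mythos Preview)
Your proposal is correct and follows exactly the paper's approach: the paper presents Corollary~\ref{ramell} as immediate from Proposition~\ref{propkp}, and you have simply spelled out the obvious deduction (alternatives (a) and (b) are unramified, so ramification forces (c) and hence $p\mid\ell$). Your observation that the general-case assumption is inherited from the ambient subsection is also accurate.
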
 

\begin{corollary}\label{corsimp} In both the general and special cases, assume
that for any prime ideal $p$ of $k$ above $\ell$ the absolute ramification
index $e(p/\ell)$ is not divisible by $(\ell-1)/2$. Then all the
$\a_i$ are coprime to $\ell$.
\end{corollary}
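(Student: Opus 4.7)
The plan is to argue by contradiction: assume some prime $\mfp_z$ of $K_z$ dividing $\mfa_i$ lies above a prime $p \mid \ell$ of $k$. Corollary \ref{corsplit1} then forces $p \in \calD_\ell$, so the splitting conditions of Definition \ref{defcald} must hold at $p$. The strategy is to show that these local conditions are incompatible with the hypothesis on $e(p/\ell)$, and the argument naturally splits into the general and special cases.

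In the general case, I would simply invoke Proposition \ref{propkp}: membership in $\calD_\ell$ implies exactly one of the three descriptions (a), (b), (c). The hypothesis $(\ell-1)/2 \nmid e(p/\ell)$ implies $(\ell-1) \nmid e(p/\ell)$, so by Lemma \ref{condkz} the prime $p$ ramifies in $k_z/k$, ruling out cases (a) and (b), which both require $p$ to be split there. Case (c), on the other hand, requires $e(p/\ell)$ to be an odd multiple of $(\ell-1)/2$ and is thus directly incompatible with the hypothesis.

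The special case cannot use Proposition \ref{propkp} since no mirror field is available, so here I would argue with ramification indices directly. Condition (1) of Definition \ref{defcald} says $\mfp$ is totally split in $K_z/K$, so $e(\mfp_z/\mfp) = 1$ and hence $e(\mfp_z/p) \leq [K:k] = 2$. Since $K_z = k_z$ in the special case, the standard formula for ramification in the cyclotomic extension $k(\zeta_\ell)/k$ gives $e(\mfp_z/p) = (\ell-1)/\gcd(\ell-1, e(p/\ell))$. The elementary observation that any divisor of $\ell-1$ which is at least $(\ell-1)/2$ must in fact be $(\ell-1)/2$ or $\ell-1$ --- both multiples of $(\ell-1)/2$ --- combined with the hypothesis gives $\gcd(\ell-1, e(p/\ell)) < (\ell-1)/2$ and thus $e(\mfp_z/p) > 2$, contradicting the previous bound.

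The main point requiring a little care is the special case: the argument hinges on the local ramification formula for the cyclotomic tower and the elementary divisibility observation for $\ell-1$; once these are in hand the contradiction is immediate. The general case is essentially bookkeeping on top of Proposition \ref{propkp}.
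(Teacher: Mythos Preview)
Your proof is correct and fills in what the paper leaves implicit (the paper simply declares the corollary ``immediate'' after Proposition~\ref{propkp}). Your case split is natural and each half is sound.

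One small remark: your special-case argument can be made to work uniformly, so the detour through the trichotomy of Proposition~\ref{propkp} in the general case is unnecessary. In both cases condition~(1) of Definition~\ref{defcald} gives $e(\p_z/\p)=1$, and then Proposition~\ref{proppzp} (applied to $K_z/K$) yields $(\ell-1)\mid e(\p)$. Since $e(\p)=e(\p/p)\,e(p/\ell)$ with $e(\p/p)\in\{1,2\}$ and $\ell-1$ is even, this forces $(\ell-1)/2\mid e(p/\ell)$, contradicting the hypothesis. This is presumably what the paper has in mind, since Proposition~\ref{propkp} is stated only for the general case while the corollary covers both.
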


Note that for $\ell=3$ this corollary is empty, but the conclusion of the corollary always holds
when $\ell>2[k:\Q]+1$, and in particular when $k=\Q$ and $\ell\ge5$.

\begin{proposition}\label{propsplit2} There exists an ideal $\a_{\al}$ of $K$
such that $\prod_{0\le i\le\ell-2}\a_i=\a_{\al}\Z_{K_z}$. In addition:
\begin{enumerate}
\item In the general (resp., special) case, $\a_{\al}$ is stable by $\tau$ and $\tau_2$ 
(resp., by $\tau$).
\item If either the assumption of Corollary \ref{corsimp} is satisfied (for 
instance when $\ell>2[k:\Q]+1$), or we are in the special case with 
$\ell\equiv1\pmod4$, then $\a_{\al}=\a'_{\al}\Z_K$ for some ideal $\a'_{\al}$
of $k$.\end{enumerate}\end{proposition}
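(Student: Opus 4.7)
The plan is to set $\mfb := \prod_{0\le i\le\ell-2}\a_i$ and to argue in two stages: first, that $\mfb$ descends to an ideal $\a_\al$ of $K$ with the stated stabilities (proving (1)), and then, under the hypotheses of (2), that this $\a_\al$ descends further to an ideal $\a'_\al$ of $k$. Two observations drive everything. First, since each $\a_i$ is squarefree and the $\a_i$ are pairwise coprime, $v_{\p_z}(\mfb)\in\{0,1\}$ for every prime $\p_z$ of $K_z$, and hence $v_\mfp(\a_\al)\le 1$ for every prime $\mfp$ of $K$ once $\a_\al$ is known to exist. Second, Corollary \ref{corsplit1} guarantees that every prime appearing in any $\a_i$ lies over a prime of $k$ that is totally split in $K_z/K$, so there is no valuation-divisibility obstruction to descent from $K_z$ to $K$.

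For part (1), apply Lemma \ref{lemconda1}. In the general case $\tau(\a_i)=\a_{i-1}$, giving $\tau(\mfb)=\mfb$; the relation $\tau_2(\a_0)=\tau^{(\ell-1)/2}(\a_0)$ together with the commutativity of $\tau$ and $\tau_2$ yields $\tau_2(\a_i)=\a_{i+(\ell-1)/2}$, and hence also $\tau_2(\mfb)=\mfb$. In the special case $\tau(\a_i)=\a_{i+(\ell-3)/2}$, so again $\tau(\mfb)=\mfb$. In either case $\mfb$ is fixed by $\Gal(K_z/K)$ (which is $\langle\tau\rangle$ in the general case and $\langle\tau^2\rangle$ in the special case), and by the previous paragraph it descends to $\a_\al\subset K$ with $\mfb=\a_\al\Z_{K_z}$. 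The asserted $\tau$- and $\tau_2$-stability of $\a_\al$ then follows from the corresponding stability of $\mfb$ together with the injectivity of ideal extension from $K$ to $K_z$.

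For part (2), descent of a $\Gal(K/k)$-stable ideal of $K$ to $k$ holds if and only if, at every prime $\mfp$ of $K$ above a prime $p$ of $k$, the valuation $v_\mfp$ is divisible by $e(\mfp/p)$. Since $K/k$ is quadratic, this is automatic at unramified primes, and at ramified primes it requires $v_\mfp(\a_\al)$ to be even; combined with the bound $v_\mfp(\a_\al)\le 1$, it suffices to show that no prime of $K$ ramified over $k$ appears in $\a_\al$. By Corollary \ref{ramell} any such prime must lie above $\ell$. If the hypothesis of Corollary \ref{corsimp} holds, then every $\a_i$ is coprime to $\ell$ and we are done. In the special case with $\ell\equiv 1\pmod 4$, Definition \ref{defcald}(3) combined with Corollary \ref{corsplit1} forces any prime of $k$ below a divisor of some $\a_i$ to be totally split in $K_z/k$, hence split (and in particular unramified) in $K/k$. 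Either way, no ramified prime of $K/k$ divides $\a_\al$, and $\a_\al$ descends to $\a'_\al\subset k$.

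The main technical subtlety to watch for is the distinction between $\Gal(K/k)$-invariance and genuine descent: a ramified prime $\mfp$ of $K/k$ is automatically fixed by $\Gal(K/k)$, so an odd valuation at such $\mfp$ is consistent with invariance yet incompatible with descent from $k$. This is precisely the reason the special case with $\ell\equiv 3\pmod 4$ is excluded from (2): Definition \ref{defcald} there permits primes above $\ell$ ramified in $K/k$ to still divide the $\a_i$, producing genuine obstructions unaddressed by Galois invariance alone.
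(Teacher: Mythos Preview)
Your argument is correct, but the route differs from the paper's. You prove descent abstractly: show $\mfb=\prod_i\a_i$ is $\Gal(K_z/K)$-stable, then use Corollary~\ref{corsplit1} to see that every contributing prime has $e(\p_z/\p)=1$, so the valuation condition for descent is vacuous. The paper instead constructs $\a_\al$ explicitly as a norm: in the general case $\a_i=\tau^{-i}(\a_0)$ gives $\prod_i\a_i=\N_{K_z/K}(\a_0)\Z_{K_z}$ directly, and in the special case $\a_\al=\N_{K_z/K}(\a_0\a_1)$; for~(2) in the special case with $\ell\equiv1\pmod4$, the paper uses that $(\ell-3)/2$ is odd together with $\a_1=\tau^{(\ell-3)/2}(\a_0)$ to identify $\a'_\al=\N_{K_z/k}(\a_0)$. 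The norm approach is slightly more economical (it bypasses the valuation analysis and the appeal to Corollary~\ref{corsplit1} for part~(1)), and it yields the explicit formula $\a_\al=\N_{K_z/K}(\a_0)$ which is invoked later in the proof of Theorem~\ref{thm:main}. Your approach, on the other hand, makes transparent exactly where ramification would obstruct descent, which is why your closing remark on the $\ell\equiv3\pmod4$ special case reads so naturally.
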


\begin{proof} (1). In the general case, since $\tau(\a_i)=\a_{i-1}$ we 
have $\prod_{0\le i\le\ell-2}\a_i=\a_{\al}\Z_{K_z}$ with 
$\a_{\al}=\N_{K_z/K}(\a_0)$, and since $\tau_2(\a_i)=\a_{i+(\ell-1)/2}$,
$\a_{\al}$ is stable by $\tau_2$. In the special case, since
$\tau^2(\a_i)=\a_{i-2}$ we have 
$\prod_{0\le i<(\ell-1)/2}\a_{2i}=\N_{K_z/K}(\a_0)\Z_{K_z}$ and
$\prod_{0\le i<(\ell-1)/2}\a_{2i+1}=\N_{K_z/K}(\a_1)\Z_{K_z}$, so that
$\prod_{0\le i<\ell-1}\a_i=\a_{\al}\Z_{K_z}$ with
$\mfa_{\al}=\N_{K_z/K}(\a_0\a_1)$ an ideal of $K$, and since
$\tau(\a_i)=\a_{i+(\ell-3)/2}$, $\a_{\al}$ is stable by $\tau$.

\smallskip

(2). In the special case with $\ell\equiv1\pmod4$ then $(\ell-3)/2$
is odd, so since $\a_1=\tau^{(\ell-3)/2}(\a_0)$ it follows that 
$\tau(\N_{K_z/K}(\a_0))=\N_{K_z/K}(\a_1)$, so that
$\prod_{0\le i\le\ell-2}\a_i=\N_{K_z/k}(\a_0)\Z_{K_z}=\a'_{\al}\Z_{K_z}$ with
$\a'_{\al}$ an ideal of the base field $k$. On the other hand, if the
assumption of Corollary \ref{corsimp} is satisfied then $\a_{\al}$ is coprime to
$\ell$, hence by Corollary \ref{ramell} it is not divisible by any prime
ramified in $K/k$, and since it is stable by $\Gal(K/k)$ it comes from an
ideal $\a'_{\al}$ of $k$.\end{proof}

\section{Hecke Theory: Conductors}\label{sec:conductors}

Our goal (see Theorem \ref{thmfnk}) 
is to give a usable expression for the ``conductor'' $f(L)$
in terms of the fundamental quantities $(\a_0,\cdots,\a_{\ell-2},\ov{u})$
given by Proposition \ref{propbij2}, where we recall that the conductor of the
$C_\ell$-extension $N/K$ is equal to $f(N/K)=f(L)\Z_K$ and that 
$\gd(L/k)=\gd(K/k)^{(\ell-1)/2}f(L)^{\ell-1}$.

%In this section we will denote by $p$ a prime ideal of $k$ over $\ell$, by $\p$ a prime 
%ideal of $K$ above $p$, by $\p_z$ a prime ideal of $K_z$ above $\p$, and in the general case, by $\p_k$ a prime ideal of $k_z$ below $\p_z$.

We first recall from \cite{CoDiOl1} and \cite{CoDiOl3} some results concerning
the cyclotomic extensions $k_z/k$ and $K_z/K$. 

\begin{remark} By and large we stick to the notation of \cite{CoDiOl3}
  except that the notation $m(\mfp)$ of \cite{CoDiOl3} is the same as $M(\mfp)$
  here, which corresponds to numbers $A_{\al}$, while our $m(\mfp)$
  corresponds to numbers $a_{\al}$.
\end{remark}

\begin{proposition}\cite[Theorem 2.1]{CoDiOl3}\label{proppzp} As above, let $p$ be a prime of $k$ over $\ell$,
and let $e(\p)$ and $e(p)$ be the respective absolute ramification indices
over $\ell$. Then we have
\begin{equation}\label{eq:pzp}
e(\p_z/\p)=\dfrac{\ell-1}{(\ell-1,e(\p))}\text{\quad and\quad}\dfrac{e(\p_z/\ell)}{\ell-1}=\dfrac{e(\p)}{(\ell-1,e(\p))}\;.
\end{equation}
\end{proposition}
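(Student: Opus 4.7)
The plan is to reduce to a purely local computation at $\p$. Let $K_\p$ denote the completion of $K$ at $\p$; then the completion of $K_z$ at any prime $\p_z$ of $K_z$ above $\p$ is $K_\p(\ze)$, and ramification indices are preserved by completion. Both identities in the proposition are equivalent to the single local formula
\[
e\bigl(K_\p(\ze)/\Q_\ell\bigr) \;=\; \lcm\bigl(e(\p),\,\ell-1\bigr),
\]
from which the first identity follows by dividing by $e(\p)=e(K_\p/\Q_\ell)$ via the tower relation, and the second by a direct rearrangement.

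The lower bound $e(K_\p(\ze)/\Q_\ell)\geq \lcm(e(\p),\ell-1)$ is immediate: $K_\p(\ze)$ contains $K_\p$, whose ramification index over $\Q_\ell$ is $e(\p)$, and $\Q_\ell(\ze)$, which is totally tamely ramified of degree $\ell-1$. The latter is visible from $\prod_{1\le i\le\ell-1}(1-\ze^i)=\Phi_\ell(1)=\ell$, which shows that $1-\ze$ is an Eisenstein uniformizer of degree $\ell-1$. Hence $e(K_\p(\ze)/\Q_\ell)$ is divisible by both $e(\p)$ and $\ell-1$, and thus by their least common multiple.

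The upper bound is the main obstacle and requires separating tame from wild ramification. Write $e(\p)=e_te_w$ with $e_w=\ell^{v_\ell(e(\p))}$ and $(e_t,\ell)=1$, and consider the tower $\Q_\ell\subseteq K_\p^{\rm nr}\subseteq K_\p^{\rm tame}\subseteq K_\p$. Because $\Q_\ell$ (hence $K_\p^{\rm nr}$) already contains $\mu_{\ell-1}$, both $K_\p^{\rm tame}/K_\p^{\rm nr}$ and $K_\p^{\rm nr}(\ze)/K_\p^{\rm nr}$ are Kummer, given by adjoining $e_t$-th and $(\ell-1)$-th roots of $\ell$ respectively, up to units whose adjunction contributes only to residue degree in this residue characteristic. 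A direct computation then shows that their compositum has ramification index exactly $\lcm(e_t,\ell-1)$ over $K_\p^{\rm nr}$, since it embeds into $K_\p^{\rm nr}(\sqrt[m]{\ell})$ modulo unramified extensions, with $m=\lcm(e_t,\ell-1)$. The wild part $K_\p/K_\p^{\rm tame}$ of degree $e_w$ then contributes its ramification multiplicatively, because $(e_w,\ell-1)=1$ forces the wild and newly acquired tame ramification to be coprime, yielding the total ramification $e_w\cdot \lcm(e_t,\ell-1)=\lcm(e(\p),\ell-1)$.

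The delicate step is this tame Kummer calculation: carefully tracking how unit factors in the Kummer generators inflate residue degree but not ramification. An equivalent alternative uses local class field theory, since $\Q_\ell(\ze)/\Q_\ell$ corresponds to the norm subgroup $\ell^{\Z}\cdot(1+\ell\Z_\ell)\subset \Q_\ell^*$ of index $\ell-1$; pulling back along $N_{K_\p/\Q_\ell}\colon K_\p^*\to\Q_\ell^*$ produces the norm subgroup of $K_\p(\ze)/K_\p$, from which the ramification index can be read off by locating the image of the units in the resulting cyclic quotient. Either way, the essential content is the interaction of the tame ramification of $\Q_\ell(\ze)$ with the tame/wild filtration of $K_\p$.
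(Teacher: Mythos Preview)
The paper itself gives no proof of this proposition; it is simply quoted from \cite[Theorem~2.1]{CoDiOl3}. So there is nothing to compare against except the external reference, and your self-contained local argument is a reasonable substitute.

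Your reduction to the single identity $e(K_\p(\ze)/\Q_\ell)=\lcm(e(\p),\ell-1)$ and the lower bound are clean and correct. The upper-bound argument is also essentially correct, but one point of phrasing deserves care: you call $K_\p^{\mathrm{tame}}/K_\p^{\mathrm{nr}}$ ``Kummer'' on the grounds that $\mu_{\ell-1}\subset K_\p^{\mathrm{nr}}$, yet Kummer theory for an extension of degree $e_t$ would require $\mu_{e_t}\subset K_\p^{\mathrm{nr}}$, and nothing forces $e_t\mid\ell-1$ (indeed $e_t$ can exceed $\ell-1$). What you actually need, and what you implicitly use, is the structure theorem for totally tamely ramified extensions: any such extension of degree $e$ has the form $F(\sqrt[e]{\pi_F})$ for some uniformizer $\pi_F$, with no hypothesis on roots of unity. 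Combined with the fact that for $(n,\ell)=1$ adjoining an $n$th root of a unit to a local field of residue characteristic $\ell$ is unramified (Hensel on $X^n-\bar u$), your embedding into $K_\p^{\mathrm{nr}}(\sqrt[m]{\ell})$ times an unramified extension goes through and gives the desired bound $\lcm(e_t,\ell-1)$. The wild step is fine: $[K_\p(\ze):K_\p^{\mathrm{tame}}(\ze)]$ divides $[K_\p:K_\p^{\mathrm{tame}}]=e_w$, while the lower bound forces the full power $e_w$ to appear in $e(K_\p(\ze)/\Q_\ell)$, so the ramification contribution is exactly $e_w$.

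In short: the argument is sound once ``Kummer'' is replaced by the tame structure theorem; otherwise the proof is complete and would serve as an independent proof of the cited result.
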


\begin{definition}\label{defpowfrac}
Suppose that $p$, $\p$, and $\p_z$ are as above, 
so that $e(\p_z/p)\mid(\ell-1)$. Moreover, 
let $\ov{\al}\in (K_z^*/{K_z^*}^\ell)[T]$ be as in Proposition \ref{propbij1}.
\begin{enumerate}
\item If $p\Z_K=\p^2$ in $K/k$ we set $p^{1/2}=\p$, and if
$\p\Z_{K_z}=\p_z^{e(\p_z/\p)}$ in $K_z/K$,
we set $\p_z=\p^{1/e(\p_z/\p)}$.
\item We say that an ideal $p$ of $k$ divides some $\Gal(K_z/k)$-invariant ideal $\b$ of
$K$ (resp., of $K_z$) when $(p\Z_K)^{1/e(\p/p)}$ (resp.,
$(p\Z_K)^{1/e(\p_z/p)}$) does, or equivalently when $\p$ (resp.,
$\p^{1/e(\p_z/\p)}$) does, where this last condition is independent of the choice of 
ideal $\p$ of $K$ above $p$.
\item If $e$ is an integer, write $r(e)$ for the unique integer
such that $e\equiv r(e)\pmod{\ell-1}$ and $1\le r(e)\le\ell-1$.
\item We write
$$M(\p)=\frac{\ell e(\p_z/\ell)}{\ell-1} = \frac{ \ell e(\p)}{(\ell - 1, e(\p))} \in \Z \ , \ \ \
m(\p)=\frac{M(\p)}{e(\p_z/\p)}=\frac{\ell e(\p)}{\ell-1}\;.$$
\item 
Denote by $E_n$ the congruence $x^\ell/\al\equiv1\pmod{^*\p_z^n}$ in $K_z$.
\item Define quantities $A_{\al}(p)$ and $a_{\al}(p)$ as follows:
\begin{itemize}
\item If $E_n$ is soluble for $n=M(\p)$, we set 
$A_{\al}(\p)=M(\p)+1$ and $a_{\al}(\p)=m(\p)$.
\item Otherwise, if $n<M(\p)$ is the largest exponent for 
which it is soluble, we set $A_{\al}(\p)=n$ and we define
$$a_{\al}(\p)=\dfrac{A_{\al}(\p)-r(e(\p))/(\ell-1,e(\p))}{e(\p_z/\p)}
= 
\left\lceil\dfrac{A_{\al}(\p)}{e(\p_z/\p)}\right\rceil-1 \in \Z \;.$$
\end{itemize}
\end{enumerate}
\end{definition}

\begin{remarks}\begin{enumerate}
\item The quantity $r(e(\p))/(\ell-1,e(\p))=r(e(\p))/(\ell-1,r(e(\p)))$
is an integer, and equals $1$ when $\ell=3$ or when
$k=\Q$ for instance, and the second equality for $a_\al(\p)$ is proved below.
\item The notation $A_{\al}(\p)$ and $a_{\al}(\p)$ (instead of $A_{\al}(\p_z)$ 
and $a_{\al}(\p_z)$) is justified by the following lemma:
\end{enumerate}\end{remarks}

\begin{lemma}\label{lemsol} With the above assumptions, the solubility of $E_n$
%(the congruence $x^\ell/\al\equiv1\pmod{^*\p_z^n}$) 
is independent of the ideal 
$\p_z$ of $K_z$ above $p$; i.e.,
it is equivalent to 
$x^\ell/\al\equiv1\pmod{^*p^{n/e(\p_z/p)}}$ or to
$x^\ell/\al\equiv1\pmod{^*\p^{n/e(\p_z/\p)}}$.
\end{lemma}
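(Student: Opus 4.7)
The plan is to prove the three equivalences by Galois-conjugating a solution of $E_n$ and then packaging the resulting solutions at all conjugate primes via the Chinese Remainder Theorem.

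The key observation is that $\ov{\al}\in(K_z^*/{K_z^*}^\ell)[T]$ is a $G$-eigenvector, where $G=\Gal(K_z/k)$. Indeed, the conditions $(\tau-g)\ov{\al}=\ov{0}$ and (in the general case) $(\tau_2+1)\ov{\al}=\ov{0}$ give $\tau(\al)\equiv\al^g$ and $\tau_2(\al)\equiv\al^{-1}$ modulo ${K_z^*}^\ell$. Hence for every $\sigma\in G$ there exist an integer $c_\sigma$ coprime to $\ell$ and some $\g_\sigma\in K_z^*$ with $\sigma(\al)=\al^{c_\sigma}\g_\sigma^\ell$.

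Now suppose $E_n$ is soluble at $\p_z$, say $x_0^\ell\equiv\al\pmod{^*\p_z^n}$. Fix $\sigma\in G$ and set $\p_z'=\sigma(\p_z)$. Applying $\sigma$ to the congruence gives $(\sigma(x_0)/\g_\sigma)^\ell\equiv\al^{c_\sigma}\pmod{^*\p_z'^n}$. Choosing $c_\sigma'\in\Z$ with $c_\sigma c_\sigma'=1+k\ell$ for some $k\in\Z$, we raise to the $c_\sigma'$-th power and divide by $\al^k$ to obtain $\bigl((\sigma(x_0)/\g_\sigma)^{c_\sigma'}/\al^k\bigr)^\ell\equiv\al\pmod{^*\p_z'^n}$. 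Since $G$ acts transitively on the primes of $K_z$ above $p$, solubility of $E_n$ at one prime $\p_z\mid p$ implies solubility at every such prime.

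Finally, once $E_n$ is soluble at every $\p_z\mid p$, the Chinese Remainder Theorem applied to the units modulo $\prod_{\p_z\mid p}\p_z^n$ produces a single $x_0\in K_z^*$ with $x_0^\ell\equiv\al\pmod{^*\prod_{\p_z\mid p}\p_z^n}$. Because $K_z/k$ is Galois, all primes above $p$ share the common ramification index $e(\p_z/p)$, so in the fractional-ideal notation of Definition \ref{defpowfrac} we have $(p\Z_{K_z})^{n/e(\p_z/p)}=\prod_{\p_z\mid p}\p_z^n$, giving the first reformulation. The same argument applied to the transitive action of $\Gal(K_z/K)$ on the primes of $K_z$ above a fixed $\p\mid p$ in $K$ yields $\p^{n/e(\p_z/\p)}\Z_{K_z}=\prod_{\p_z\mid\p}\p_z^n$ and hence the second reformulation. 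The main obstacle is the bookkeeping: one must carefully track how the exponent of $\al$ shifts under Galois and then ``invert'' $c_\sigma$ modulo $\ell$ to recover a genuine solution of $E_n$ rather than of $x^\ell\equiv\al^{c_\sigma}$. The essential input, however, is simply the Galois eigenvector property forced by $\ov{\al}\in(K_z^*/{K_z^*}^\ell)[T]$.
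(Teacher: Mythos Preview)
Your proof is correct and follows essentially the same approach as the paper: both exploit that $\ov{\al}\in(K_z^*/{K_z^*}^\ell)[T]$ forces $\sigma(\al)=\al^{c_\sigma}\g_\sigma^\ell$ for each $\sigma\in G$, transport a solution of $E_n$ by Galois, and then invert the exponent $c_\sigma$ modulo $\ell$ to recover a solution at the conjugate prime. The paper writes the new solution more compactly as $y=(h(x)/\g)^{(-1)^jg^{-i}}$ for $h=\tau^i\tau_2^j$, whereas you spell out the inversion via $c_\sigma c_\sigma'=1+k\ell$; these are the same manipulation. Your explicit CRT step to pass from solubility at each $\p_z\mid p$ to the single congruence modulo $p^{n/e(\p_z/p)}$ (and similarly for $\p$) is a welcome clarification that the paper leaves implicit. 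One small remark: your opening sentence lists only the general-case relations $\tau(\al)\equiv\al^g$, $\tau_2(\al)\equiv\al^{-1}$; in the special case $T=\{\tau+g\}$ gives $\tau(\al)\equiv\al^{-g}$ instead, but your subsequent claim that $c_\sigma$ is coprime to $\ell$ for every $\sigma\in G$ remains valid there, so the argument goes through unchanged.
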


\begin{proof} If $\p'_z$ is another ideal above $p$, there exists
$h=\tau^i\tau_2^j\in \Gal(K_z/k)$ with $\p'_z=h(\p_z)$ 
(resp., simply $h=\tau^i$ in the
special case). Thus if $x^\ell/\al\equiv1\pmod{^*\p_z^n}$ we have 
$h(x)^\ell/h(\al)\equiv1\pmod{^*{\p'_z}^n}$. However, since
$\ov{\al}\in (K_z^*/{K_z^*}^\ell)[T]$, modulo $\ell$th powers we have 
$\ov{\tau(\al)}=\ov{\al^g}$ and $\ov{\tau_2(\al)}=\ov{\al^{-1}}$
(resp., $\ov{\tau(\al)}=\ov{\al^{-g}}$), hence
$h(\al)=\al^{(-1)^jg^i}\ga^\ell$ (resp., $h(\al)=\al^{(-1)^ig^i}\ga^\ell$)
for some $\ga\in K_z^*$. We deduce that
$y^\ell/\al\equiv1\pmod{^*{\p'_z}^n}$, with
$y=(h(x)/\ga)^{(-1)^jg^{-i}}$ (resp.,
$y=(h(x)/\ga)^{(-1)^ig^{-i}}$),
proving the lemma.
\end{proof}

\begin{proposition}\label{propap}\begin{enumerate}
\item We have $\ell\nmid A_{\al}(\p)$, and if $A_{\al}(\p)\leq M(\p)$ (equivalently, if $A_{\al}(\p)\leq M(\p) - 1$) 
then
$$A_{\al}(\p)\equiv \dfrac{e(\p)}{(\ell-1,e(\p))} \ \left( \textnormal{mod} \  \dfrac{\ell-1}{(\ell-1,e(\p))} \right)\;.$$
\item We have $a_{\al}(\p)=m(\p)$ if 
$A_{\al}(\p)=M(\p)+1$, and otherwise 
$$0\le a_{\al}(\p)\le \dfrac{\ell e(\p)}{\ell-1}-\dfrac{\ell-1+r(e(\p))}{\ell-1}
<\dfrac{\ell e(\p)}{\ell-1}-1 = m(\p) - 1\;.$$
\end{enumerate}\end{proposition}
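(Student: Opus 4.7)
The plan is to pass to the completion $K_{z,\p_z}$ and work locally at $\p_z$. Since $A_\al(\p)$ and $a_\al(\p)$ depend only on the class $\ov{\al}\in K_z^*/{K_z^*}^\ell$, we may multiply $\al$ by an $\ell$-th power to ensure $v_{\p_z}(\al)=0$. For the non-divisibility assertion in part (1), note that the definition of $A_\al(\p)$ forces the hypothesis $A_\al(\p)\le M(\p)$ to really mean $A_\al(\p)\le M(\p)-1$: if $E_{M(\p)}$ were soluble, then by definition $A_\al(\p)=M(\p)+1$. Arguing by contradiction, suppose $\ell\mid n$ where $n=A_\al(\p)$; since $M(\p)=\ell e(\p)/(\ell-1,e(\p))$ is itself divisible by $\ell$, we must have $n\le M(\p)-\ell$. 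The plan is to run the standard Kummer improvement: given $x$ with $x^\ell-\al=\pi_z^n\beta$ and $\beta$ a local unit, set $x'=x+\pi_z^{n/\ell}\gamma$ and pick $\gamma^\ell\equiv-\beta\pmod{\p_z}$, which is possible because the residue field of $\p_z$ has characteristic $\ell$ and hence surjective Frobenius. Expanding $(x')^\ell-\al$, the leading term $\pi_z^n(\beta+\gamma^\ell)$ will lie in $\p_z^{n+1}$, and the cross terms $\binom{\ell}{k}x^{\ell-k}\pi_z^{kn/\ell}\gamma^k$ for $1\le k\le\ell-1$ will have valuation at least $e(\p_z/\ell)+n/\ell$, exceeding $n$ provided $n\le M(\p)-\ell/(\ell-1)$; this is implied by $n\le M(\p)-\ell$. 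So $E_{n+1}$ will be soluble, contradicting maximality.

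For the congruence, I would first verify it for $M(\p)$ itself: writing $d=(\ell-1,e(\p))$, the identity $\ell\equiv 1\pmod{\ell-1}$ immediately gives $M(\p)=\ell e(\p)/d\equiv e(\p)/d\pmod{(\ell-1)/d}$. For $A_\al(\p)$ itself, I would invoke Lemma \ref{lemsol}, which, using the $T$-invariance of $\ov{\al}$, rephrases solubility of $E_n$ as a congruence in $K$ at $\p$. Since $K_z/K$ is tamely ramified of degree $f:=e(\p_z/\p)=(\ell-1)/d$ at $\p$, a uniformizer $\pi_z$ of $K_z$ satisfies $\pi_z^f=u\pi$ for a unit $u$ and uniformizer $\pi$ of $K$; the admissible $\p_z$-levels attained by $T$-invariant elements all lie in a single residue class modulo $f$, necessarily $e(\p)/d\pmod{(\ell-1)/d}$ since $M(\p)$ does.

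Part (2) will then follow by arithmetic. If $A_\al(\p)=M(\p)+1$, the equality $a_\al(\p)=m(\p)$ is by definition. Otherwise part (1) gives $A_\al(\p)\le M(\p)-1$ and $A_\al(\p)\equiv M(\p)\pmod f$, forcing $A_\al(\p)\le M(\p)-f$; dividing by $f$ gives the upper bound $a_\al(\p)\le m(\p)-1-r(e(\p))/(\ell-1)$, which is strictly less than $m(\p)-1$ since $r(e(\p))\ge 1$. The lower bound $a_\al(\p)\ge 0$ follows from $A_\al(\p)\ge 1$ (since $E_1$ is trivially soluble when $\al$ is a local unit) combined with the congruence, which forces $A_\al(\p)\ge r(e(\p))/d$. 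The hardest step will be the full residue-class assertion in part (1): ruling out $\ell\mid A_\al(\p)$ uses only Frobenius surjectivity on the residue field, whereas pinning down the exact class requires carefully threading the $T$-invariance of $\ov{\al}$ through Lemma \ref{lemsol} and the tame structure of $K_z/K$ at $\p$.
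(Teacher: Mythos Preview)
Your treatment of the non-divisibility $\ell\nmid A_\al(\p)$ via the Kummer--Hensel improvement (lifting by $x'=x+\pi_z^{n/\ell}\gamma$ with $\gamma^\ell\equiv-\beta$ in the residue field) is correct and is essentially the classical argument underlying the reference the paper cites. Likewise, your derivation of part (2) from part (1) is accurate: once the congruence is known, the bounds on $a_\al(\p)$ follow by the arithmetic you indicate.

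The gap is in your plan for the congruence $A_\al(\p)\equiv e(\p)/d\pmod{(\ell-1)/d}$. You propose to use Lemma~\ref{lemsol} to ``rephrase solubility of $E_n$ as a congruence in $K$ at $\p$,'' but this misreads the lemma. Lemma~\ref{lemsol} only asserts that the solubility of $E_n$ is independent of the choice of $\p_z$ above $p$; the congruence is still posed in $K_z$, and the notation $\p^{n/e(\p_z/\p)}$ is a formal fractional power standing for any $\p_z^n$. It does \emph{not} descend the problem to $K$. Consequently, your assertion that ``the admissible $\p_z$-levels attained by $T$-invariant elements all lie in a single residue class modulo $f$'' is unsupported: nothing in Lemma~\ref{lemsol} or in the tame structure of $K_z/K$ alone forces the maximal solubility level into a fixed residue class, and checking that $M(\p)$ lies in that class does not help, since $A_\al(\p)=M(\p)$ never actually occurs (you showed $\ell\mid M(\p)$ and $\ell\nmid A_\al(\p)$).

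As the paper's remark immediately following the proposition makes explicit, the congruence is equivalent to the integrality of $a_\al(\p)$ and genuinely requires a higher ramification group computation: the Kummer extension $K_z(\root\ell\of\al)/K_z$ descends to a cyclic degree-$\ell$ extension of $K$, and comparing conductor exponents at $\p_z$ and at $\p$ via the Hasse--Arf type filtration is what pins down the residue class. The paper itself does not reprove this and simply cites Proposition~3.8 of \cite{CoDiOl3} (together with Chapter~4 of \cite{serre_lf}); your sketch would need to incorporate that machinery rather than route through Lemma~\ref{lemsol}.
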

\begin{proof} (1) follows from Proposition 3.8 of \cite{CoDiOl3}, and (2) follows from the definitions
and from \eqref{eq:pzp}.
\end{proof}

\begin{remark} As mentioned in \cite{Coh-Mor}, the congruence (1), or
equivalently the integrality of $a_{\al}(\p)$ (when $A_{\al}(\p)<M(\p)$)
comes from a subtle although very classical computation involving higher 
ramification groups; see Proposition 3.6 of \cite{CoDiOl3} along with 
Chapter 4 of \cite{serre_lf}.
\end{remark}

We can now quote the crucial result from \cite{CoDiOl3} which gives the conductor of the extension $N/K$:

\begin{theorem}\cite[Theorem 3.15]{CoDiOl3}\label{thmfnk} Assume that $(\a_0, \dots, \a_{\ell - 2})$ are as in 
Proposition \ref{propbij2}, so that 
$\prod_{0\le i\le\ell-2}\a_i=\a_{\al}\Z_{K_z}$
with $\a_{\al}$ an ideal of $K$ stable by $\tau_2$ (resp., by $\tau$ in
the special case), and sometimes coming from $k$ (see Proposition
\ref{propsplit2}). Then the conductor of the associated field extension $N/K$ is given as follows:
$$f(N/K)=\ell\a_{\al}\dfrac{\prod_{\p\mid\ell}\p^{\lceil e(\p)/(\ell-1)\rceil}}
{\prod_{\p\mid\ell\;,\ \p\nmid\a_{\al}}\p^{\lceil a_{\al}(\p)\rceil}}\;.$$
\end{theorem}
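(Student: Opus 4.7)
The conductor $f(N/K)$ is multiplicative, so my plan is to determine the $\p$-adic contribution separately for each prime $\p$ of $K$, using the Kummer description $N_z = K_z(\root\ell\of{\al})$ with $\al\Z_{K_z} = \prod_{0 \le i \le \ell-2} \a_i^{g^i} \mfq^\ell$ from Proposition \ref{propbij2}. My strategy would be to first compute the conductor exponents for $N_z/K_z$ at each prime $\p_z$ above $\p$ via classical Kummer theory, and then transfer these back to $\p$-adic exponents for $N/K$ using standard base-change formulas for conductors, taking into account the ramification of $K_z/K$ controlled by Proposition \ref{proppzp}.

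For primes $\p \nmid \ell$, ramification in $N/K$ is tame; since $[N:K]=\ell$ is prime the conductor exponent at $\p$ is $0$ or $1$. A prime $\p_z$ ramifies in $N_z/K_z$ precisely when $v_{\p_z}(\al) \not\equiv 0 \pmod \ell$, equivalently when $\p_z \mid \a_i$ for some $i$. By Corollary \ref{corsplit1} such $\p$ lie in $\calD$, and by Proposition \ref{propsplit2} their contributions assemble into the $\Gal(K_z/k)$-stable ideal $\a_{\al}$ of $K$, producing the tame part of the formula.

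For wild primes $\p \mid \ell$---the main obstacle---I would invoke the classical theorem of Hecke on conductors of Kummer extensions (see \cite[Ch.~IV]{serre_lf} and the more modern treatment in \cite{CoDiOl1}): the conductor exponent of $K_z(\root\ell\of{\al})/K_z$ at $\p_z$ equals $M(\p)+1-A_{\al}(\p)$, where $A_{\al}(\p)$ is the largest $n \le M(\p)+1$ for which $x^\ell \equiv \al \pmod{^*\p_z^n}$ is soluble, with the convention that $A_{\al}(\p) = M(\p)+1$ (and the exponent vanishes) exactly when $\al$ is a local $\ell$th power at $\p_z$. The integrality and congruence conditions for $a_{\al}(\p)$ recorded in Proposition \ref{propap}, which are essential for the final formula to make sense, would be extracted from a higher ramification group analysis in the local field $K_{z,\p_z}$ following Chapter IV of Serre's book; this is the technical heart of the argument.

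Finally, I would descend the local exponent from $\p_z$ down to $\p$ by dividing by $e(\p_z/\p)$ from Proposition \ref{proppzp}, verifying that the resulting $\p$-adic exponent at a wild prime equals $\lceil m(\p)\rceil - \lceil a_{\al}(\p)\rceil$ when $\p \nmid \a_{\al}$ and $\lceil m(\p)\rceil + v_\p(\a_{\al})$ otherwise. Reassembling across all $\p$ yields the stated product, with $\ell \cdot \prod_{\p\mid\ell}\p^{\lceil e(\p)/(\ell-1)\rceil}$ encoding the default wild contribution $\lceil m(\p)\rceil = e(\p) + \lceil e(\p)/(\ell-1)\rceil$ at every $\p \mid \ell$, and the denominator recording the savings at wild primes outside $\a_{\al}$. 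Since the statement is essentially \cite[Theorem 3.15]{CoDiOl3} transcribed into our notation, a self-contained proof would follow the argument given there.
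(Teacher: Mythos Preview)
The paper does not give a proof of this theorem at all; it is simply quoted verbatim from \cite[Theorem~3.15]{CoDiOl3}, as the sentence preceding the statement (``We can now quote the crucial result from \cite{CoDiOl3}\dots'') makes explicit. Your sketch is therefore not being compared against anything in this paper, but it is a faithful outline of the argument in \cite{CoDiOl3}: compute the local conductor exponent of the Kummer extension $N_z/K_z$ at $\p_z$ via Hecke's formula $M(\p)+1-A_\al(\p)$, then descend to $K$ using that $K_z/K$ has degree prime to~$\ell$. Your identification $\lceil m(\p)\rceil = e(\p) + \lceil e(\p)/(\ell-1)\rceil$ is exactly what makes the displayed formula come out. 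The one step I would flag as needing more than a phrase is the descent ``dividing by $e(\p_z/\p)$'': the precise relation between $v_{\p_z}(\f(N_z/K_z))$ and $v_\p(\f(N/K))$ when $K_z/K$ is (tamely) ramified at $\p$ is not literally division, and \cite{CoDiOl3} handles it via the behaviour of upper-numbering ramification under tame base change together with the congruence in Proposition~\ref{propap}(1); but you have correctly identified this as the technical heart and pointed to the right sources.
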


\begin{remark}\label{rk:corappar}
One can now draw 
additional conclusions about the $a_{\alpha}(\p)$. For example, suppose that
$p$ is a prime ideal $k$ above $\ell$
with $p\Z_K=\p^2$, $\p\nmid\a_{\al}$ and $a_{\al}(\p)<m(\p)$.
Then $v_{\p}(f(N/K)/\ell)\equiv0\pmod2$, as $f(N/K)=f(L)\Z_K$ for an ideal $f(L)$ of $k$,
and it follows from the theorem and Proposition \ref{propap} that
\begin{equation}\label{eq:corappar}
a_{\al}(\p)\equiv\lceil e(\p)/(\ell-1)\rceil\pmod{2}.
\end{equation}
\end{remark}

\begin{definition}\label{defha} 
Let $a$ equal either $m(\p)$, or an integer with
$0\le a < m(\p) - 1$, and define
$$h(0,a,\p)=\begin{cases}0&\text{\quad if $(\ell-1)\nmid e(\p)$ \ or \
$a = m(\p)$\;,}\\
1&\text{\quad if $(\ell-1)\mid e(\p)$ \ and \ $a < m(\p)$\;;}\end{cases}$$
$$h(1,a,\p)=\begin{cases}1&\text{\quad if $(\ell-1)\nmid e(\p)$\;,}\\
2&\text{\quad if $(\ell-1)\mid e(\p)$\;.}\end{cases}$$
\end{definition}

\begin{remark}\label{rem:ell3}
Note that if $\ell>2[k:\Q]+1$, for instance when
$k=\Q$ and $\ell\ge5$, we have $e(\p)<\ell-1$ so $(\ell-1)\nmid e(\p)$.
Thus in this case we simply have $h(\eps,a,\p)=\eps$, independently of
$a$ and $\p$. We will also see in Remark \ref{remre} that a number of
other formulas simplify.
\end{remark}

\smallskip

\begin{lemma}\label{lembdef} Let $\p$ be a prime ideal of $K$ above $\ell$ and denote by 
$C_n$ the congruence $x^\ell/\al\equiv1\pmod{^*\p^n}$ in $K_z$. Then 
$a_{\al}(\p)$ is equal to the unique value of $a$ as in the previous definition such that
$C_n$
is soluble for $n=a+h(0,a,\p)$ and not soluble for $n=a+h(1,a,\p)$,
where this last condition is ignored if $a+h(1,a,\p)>m(\p)$.
\end{lemma}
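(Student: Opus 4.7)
The plan is to unfold the definition of $a_\al(\p)$ and of the function $h(\eps,a,\p)$, and to translate solubility of the $\p_z$-adic congruence $E_n$ into solubility of the $\p$-adic congruence $C_b$ using Lemma \ref{lemsol}. Since $\p\Z_{K_z}=\p_z^{e(\p_z/\p)}$, the congruence $C_b$ literally coincides with $E_{b\,e(\p_z/\p)}$, and the nested solubility of the family $(E_n)$ then translates into the clean equivalence: $C_b$ is soluble if and only if $b\cdot e(\p_z/\p)\le A_\al(\p)$ (with the convention that ``$\le A_\al(\p)$'' means ``$\le m(\p)\,e(\p_z/\p)=M(\p)$'' in the boundary case $A_\al(\p)=M(\p)+1$).

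First I would dispose of the boundary case $A_\al(\p)=M(\p)+1$, in which $a_\al(\p)=m(\p)$. With $a=m(\p)$ one has $h(0,a,\p)=0$, and $C_{m(\p)}$ coincides with $E_{M(\p)}$, which is soluble by hypothesis; moreover $h(1,m(\p),\p)\ge1$, so $a+h(1,a,\p)>m(\p)$ and the non-solubility clause is vacuous. Uniqueness follows because for any smaller admissible $a$ the required pair of solubility/non-solubility conditions would force a jump in the family $(E_n)$ strictly below $M(\p)$, contradicting maximal solubility.

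Next, in the generic case $A_\al(\p)\le M(\p)$, the definition gives $A_\al(\p)=a_\al(\p)\cdot e(\p_z/\p)+r$ with $r:=r(e(\p))/(\ell-1,e(\p))$, an integer lying in $\{1,\dots,e(\p_z/\p)\}$. I would then split according to whether $(\ell-1)\mid e(\p)$. If $(\ell-1)\nmid e(\p)$, then $e(\p_z/\p)>1$ and $r(e(\p))\le\ell-2$ force the strict bound $r<e(\p_z/\p)$; since $h(0,a,\p)=0$ and $h(1,a,\p)=1$, the lemma's conditions become $a\cdot e(\p_z/\p)\le A_\al(\p)<(a+1)\cdot e(\p_z/\p)$, uniquely pinpointing $a=\lceil A_\al(\p)/e(\p_z/\p)\rceil-1=a_\al(\p)$. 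If instead $(\ell-1)\mid e(\p)$, then $e(\p_z/\p)=1$, $r=1$, $h(0,a,\p)=1$, and $h(1,a,\p)=2$, so the two conditions collapse to the single equation $A_\al(\p)=a+1$, yielding $a=a_\al(\p)$ uniquely; the bound $a+2\le m(\p)$ from Proposition \ref{propap}(2) guarantees that the non-solubility clause is not ignored.

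The main obstacle is the careful bookkeeping across the two ramification indices $e(\p)$ and $e(\p_z/\p)$; in particular the congruence from Proposition \ref{propap}(1) must be invoked precisely to secure the strict inequality $r<e(\p_z/\p)$ in the first subcase, which is exactly what pins down the unique transition point and identifies it as $a_\al(\p)$.
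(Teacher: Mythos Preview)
Your proof is correct and follows essentially the same route as the paper's own argument: both translate between the $\p_z$-congruences $E_n$ and the $\p$-congruences $C_b$ via Lemma~\ref{lemsol}, treat the boundary case $A_\al(\p)=M(\p)+1$ separately, and in the generic case split according to whether $(\ell-1)\mid e(\p)$, using the decomposition $A_\al(\p)=a_\al(\p)\,e(\p_z/\p)+r$. One small imprecision: the strict inequality $r<e(\p_z/\p)$ when $(\ell-1)\nmid e(\p)$ is in fact immediate from the definition of $r(e(\p))$ (namely $r(e(\p))\le\ell-2$), not from Proposition~\ref{propap}(1); what that congruence actually buys you is the integrality of $a_\al(\p)$, which you use implicitly when writing $A_\al(\p)=a_\al(\p)\,e(\p_z/\p)+r$ with $a_\al(\p)\in\Z$.
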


\begin{proof} 
By Lemma \ref{lemsol} the solubility of $E_n$ is equivalent
to that of $C_{n/e(\p_z/\p)}$. If 
$a=a_{\al}(\p)=m(\p)$, then $E_n$ is soluble for
$n=\ell e(\p_z/\ell)/(\ell-1)$, which is equivalent to 
$C_{m(\p)}=C_a$ as desired.

If $a=a_{\al}(\p)<m(\p)$, we have
$A_{\al}(\p)=ae(\p_z/\p)+r(e(\p))/(\ell-1,e(\p))$, and 
Proposition \ref{propap} (1) implies that the solubility
of $E_n$ for $n=A_{\al}(\p)$ is equivalent to that of $E_{n'}$ 
when $A_{\al}(\p)-(\ell-1)/(\ell-1,e(\p))<n'\le A_{\al}(\p)$.
If $(\ell-1)\nmid e(\p)$ we have $r(e(\p))<\ell-1$ and choose
$n'=ae(\p_z/\p)$, while if $(\ell-1)\mid e(\p)$ we choose 
$n'=n=ae(\p_z/\p)+1$. Thus the solubility of $E_{A_{\al}(\p)}$ and 
$E_{n'}$ is equivalent to that of $C_{n''}$, where
$n''=n'/e(\p_z/\p)=a+h(0,a,\p)$ by definition of $h(0,a,\p)$. 
(Recall that $e(\p_z/\p) = 1$ when $(\ell - 1)\mid e(\p)$.)

Furthermore, since
$E_n$ is not soluble for $n=A_{\al}(\p)+1$, we also have that
$E_{n'}$ is not soluble, where $n' = n$ if $(\ell - 1) \mid e(\p)$
and $n' = a(\p_z/\p) + (\ell-1)/(\ell-1,e(\p)) \geq n'$ otherwise.
The solubility of $E_{n'}$ is equivalent to that of $C_{n''}$ where
 $n''=n'/e(\p_z/\p)=a+h(1,a,\p)$, as desired.
 
 Finally, we conclude by checking that the conditions are mutually exclusive.

\end{proof}

\section{The Dirichlet Series}\label{sec:dir_series}

Since $f(N/K)=f(L)\Z_K$ for some ideal $f(L)$ of $k$, we have
$\N_{K/\Q}(f(N/K))=\N_{k/\Q}(f(L))^2$.
To
emphasize the fact that we are mainly interested in the norm from $k/\Q$, we set
the following definition (norms from extensions other than $k/\Q$ will always indicate the field extension explicitly):
\begin{definition}\label{def41} If $\a$ is an ideal of $k$, we set 
$\N(\a)=\N_{k/\Q}(\a)$, while if $\a$ is an ideal of $K$, we set 
$$\N(\a)=\N_{K/\Q}(\a)^{1/2}\;.$$
In particular, for each ideal $\a$ of $k$ we have $\N(\a) = \N(\a \Z_K)$.
\end{definition}

Recall that we set
$$\Phi_{\ell}(K,s)=\dfrac{1}{\ell-1}+\sum_{L\in\calFL(K)}\dfrac{1}{\N(f(L))^s}\;,$$
with $f(N/K)=f(L)\Z_K$ is given by Theorem \ref{thmfnk}. By Proposition 
\ref{propbij1}, we have
$$(\ell-1)\Phi_{\ell}(K,s)=\sum_{\ov{\al}\in(K_z^*/{K_z^*}^\ell)[T]}\dfrac{1}{\N(f(L))^s}\;,$$
where $L=K_z(\root\ell\of\al)^G$ (including $\ov{\al}=1$ corresponding to $L=K_z^G=k$ with $f(L)=\Z_k$
and $\N(f(L))=1$), so by Proposition \ref{propbij2}, we have
$$(\ell-1)\Phi_{\ell}(K,s)=\sum_{(\a_0,\dots,\a_{\ell-2})\in J}\sum_{\ov{u}\in S_{\ell}(K_z)[T]}\dfrac{1}{\N(f(L))^s}\;,$$
\noindent
where $J$ is the set of $(\ell-1)$uples of ideals satisfying condition (a) of
Proposition \ref{propbij2}, and $f(L)$ is the conductor of the extension 
corresponding to $(\a_0,\dots,\a_{\ell-2},\ov{u})$. Thus, replacing $f(L)$ by
the formula given by Theorem \ref{thmfnk}, recalling that $\prod_{\p | \ell} \N(\p)^{e(\p)}
= \ell^{[k : \Q]}$, and writing
$$e(\p)=(\lceil e(\p)/(\ell-1)\rceil-1)(\ell-1)+r(e(\p))\;,$$
we obtain
\begin{equation}\label{eq:phil}
(\ell-1)\Phi_{\ell}(K,s)=
\ell^{-\frac{\ell}{\ell-1}[k:\Q]s}
\prod_{\p\mid\ell}\N(\p)^{-\frac{\ell-1-r(e(\p))}{\ell-1}s}
\sum_{(\a_0,\dots,\a_{\ell-2})\in J}\dfrac{S_{\al}(s)}{\N(\a_{\al})^s}\;,
\end{equation}
where
$$S_{\al}(s)=\sum_{\ov{u}\in S_{\ell}(K_z)[T]}\prod_{\substack{\p\mid\ell\\\p\nmid\a_{\al}}}\N(\p)^{\lceil a_{\al u}(\p)\rceil s}\;,$$
and where $\al$ is any element of $K_z^*$ such that
$\ov{\al}\in(K_z^*/{K_z^*}^\ell)[T]$ and $\q_0^\ell\prod_{0\le i\le\ell-2}\a_i^{g^i}=\al\Z_{K_z}$
for some ideal $\q_0$.

\begin{definition}\label{def:fal} For $\al\in K_z^*$ and an ideal $\b$ of $K_z$,
we introduce the function
$$f_{\al}(\b)=|\{\ov{u}\in S_{\ell}(K_z)[T],\ x^{\ell}/(\al u)\equiv1\pmod{^*\b}\text{\quad soluble in $K_z$}\}|\;,$$
with the convention that $f_{\al}(\b)=0$ if $\b\nmid(1-\ze)^\ell\Z_{K_z}$.
\end{definition}

Let $p_i$ for $1\le i\le n = n(\alpha)$ be the prime ideals of $k$ above $\ell$ and
not dividing $\a_{\al}$, and for each $i$ let $a_i$ be such 
that either $a_i=m(\p_i)$, or 
$0\le a_i \leq m(\p_i)-\frac{(\ell-1)+r(e(\p_i))}{\ell-1} = \lceil m(\p_i) \rceil - 2$ with
$a_i\in\Z$, where as usual $\p_i$ is an ideal of $K$ above $p_i$, and let $A$ be 
the set of such $(a_1,\dotsc,a_n)$. Noting that thanks to the convention
of Definition \ref{def41} we have 
$\prod_{\p_i\mid p_i}\N(\p_i)=\N(p_i)^{1/e(\p_i/p_i)}$, we thus have

\begin{equation}\label{eq:sal_prelim}
S_{\al}(s)=\sum_{(a_1,\dotsc,a_n)\in A}\prod_{1\le i\le n}\N(p_i)^{\lceil a_i\rceil s/e(\p_i/p_i)}
\sum_{\substack{\ov{u}\in S_{\ell}(K_z)[T]\\\forall{i},\ a_{\al u}(\p_i)=a_i}}1\;.
\end{equation}

By Lemma \ref{lembdef}, we have $a_{\alpha u}(\p_i) \geq a_i$ if and only if $\ov{u}$
is counted by $f_{\al}(\p_i^{b_i})$, where $b_i = a_i + h(0, a, \p_i)$, and we rewrite
$\p_i^{b_i} = p_i^{b_i/e(\p_i/p_i)}$. 
Let $B(\alpha)$ be the
set of $n$-uples $(b_1,\dotsc,b_n)$ with $0\le b_i\le m(\p_i)$,
$b_i\in \Z\cup \{m(\p_i)\}$.
By inclusion-exclusion we obtain the following:

\begin{lemma}\label{lempb} We have
\begin{equation}\label{eqn_sal}
S_{\al}(s)=\sum_{(b_1,\dots,b_n)\in B(\alpha)} f_{\al}\left(\prod_{1\le i\le n}p_i^{b_i/e(\p_i/p_i)}\right) \prod_{1\le i\le n} 
\left(\N(p_i)^{\lceil b_i\rceil s/e(\p_i/p_i)} Q(p_i^{b_i/e(\p_i/p_i)},s)\right),
\end{equation}
where $Q(p^{b/e(\p/p)},s)$ is defined as follows. Let as usual $\p$ be an 
ideal of $K$ above $p$ and define 
$q=\N(p)^{1/e(\p/p)}$. Then if $b=m(\p)$ or $0\le b<m(\p)$ with $b\in\Z$:
\begin{enumerate}
\item If $(\ell-1)\nmid e(\p)$ we set
$$Q(p^{b/e(\p/p)},s)=\begin{cases}
1 & \text{\quad if\quad}b=0\;,\\
1-1/q^s &\text{\quad if\quad}1\le b\le \lceil m(\p)\rceil-2\;,\\
-1/q^s &\text{\quad if\quad}b=\lceil m(\p)\rceil-1\;,\\
1 & \text{\quad if\quad}b=m(\p)\;.\end{cases}$$
\item If $(\ell-1)\mid e(\p)$ we set
$$Q(p^{b/e(\p/p)},s)=\begin{cases}
0 & \text{\quad if\quad}b=0\;,\\
1/q^s & \text{\quad if\quad}b=1 \;,\\
1/q^s-1/q^{2s} & \text{\quad if\quad}2\le b\le m(\p)-1\;,\\
1-1/q^{2s} & \text{\quad if\quad}b=m(\p)\;.\end{cases}$$
\end{enumerate}
\end{lemma}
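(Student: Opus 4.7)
The plan is to apply Lemma \ref{lembdef} pointwise to translate the prescribed values of $a_{\al u}(\p_i)$ into joint solubility conditions, and then to reindex the resulting sum so as to extract the claimed factor $Q$.

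By Lemma \ref{lembdef}, for a fixed $\ov u \in S_\ell(K_z)[T]$, the condition $a_{\al u}(\p_i) = a_i$ holds if and only if the congruence $x^\ell/(\al u) \equiv 1\pmod{^*\p_i^{a_i+h(0,a_i,\p_i)}}$ is soluble in $K_z$ and the analogous congruence at exponent $a_i + h(1,a_i,\p_i)$ is not, the latter clause being vacuous when $a_i + h(1,a_i,\p_i) > m(\p_i)$. Since solubility at a higher exponent forces solubility at any lower one, this indicator is the difference of the two solubility indicators, with the subtracted term read as zero in the vacuous case. Multiplying these differences over $i$ produces a sum over $\eps = (\eps_i) \in \{0,1\}^n$; since the primes $\p_i$ are pairwise coprime, the Chinese Remainder Theorem in $K_z$ converts the product of the individual solubility conditions into the single solubility condition modulo $\prod_i \p_i^{a_i + h(\eps_i, a_i, \p_i)}$. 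Summing over $\ov u$ thus collapses the inner sum to an $f_\al$ evaluation:
\begin{equation*}
\sum_{\substack{\ov u \in S_\ell(K_z)[T]\\ a_{\al u}(\p_i) = a_i\ \forall i}} 1
\;=\; \sum_{\eps \in \{0,1\}^n} (-1)^{|\eps|}\, f_\al\!\left(\prod_i \p_i^{a_i + h(\eps_i, a_i, \p_i)}\right).
\end{equation*}

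The next step is to substitute this into \eqref{eq:sal_prelim}, interchange the sums, and reindex via $b_i = a_i + h(\eps_i, a_i, \p_i)$, so that $f_\al$ depends only on $(b_1, \dotsc, b_n)$. A short verification shows that the admissible $b_i$'s sweep out exactly $B(\alpha)$ (with $b = 0$ in the case $(\ell-1) \mid e(\p)$ absent because it has no preimage, consistently with the $Q = 0$ entry in that branch of the formula). Grouping terms according to their $b$-tuple matches \eqref{eqn_sal} provided that for every $\p$ and every admissible $b$,
\begin{equation*}
Q(p^{b/e(\p/p)}, s)
\;=\; \N(p)^{-\lceil b \rceil s / e(\p/p)} \sum_{\substack{(a, \eps)\\ a + h(\eps, a, \p) = b}} (-1)^{\eps}\, \N(p)^{\lceil a \rceil s / e(\p/p)}.
\end{equation*}

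Verifying this identity is a routine but fiddly case analysis, partitioned by whether $(\ell-1) \mid e(\p)$ and by the position of $b$ among the boundary values. For example, when $(\ell-1) \nmid e(\p)$ and $1 \le b \le \lceil m(\p) \rceil - 2$, the preimages of $b$ are $(b, 0)$ and $(b-1, 1)$, contributing $q^{bs} - q^{(b-1)s}$ and hence $Q = 1 - 1/q^s$; the boundary values $b \in \{0, \lceil m(\p) \rceil - 1, m(\p)\}$ arise from one of the two preimages being absent, the case $a = m(\p)$ contributing only via $\eps = 0$ because $h(1, m(\p), \p)$ is vacuous. The parallel analysis when $(\ell-1) \mid e(\p)$ yields the second branch of the piecewise formula. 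The principal (though minor) obstacle is keeping this bookkeeping consistent across the boundaries, and ensuring that the vacuous-condition convention for $a = m(\p)$ assigns its contribution to the $b = m(\p)$ entry of $Q$.
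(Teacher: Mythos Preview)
Your proof is correct and follows the same inclusion--exclusion route as the paper, which merely observes that $a_{\al u}(\p_i)\ge a_i$ is equivalent to being counted by $f_\al(\p_i^{a_i+h(0,a_i,\p_i)})$ and then invokes inclusion--exclusion without spelling out the case analysis. Your reindexing via $b_i = a_i + h(\eps_i,a_i,\p_i)$ and the subsequent verification of $Q$ is exactly the omitted bookkeeping.
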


\medskip
\begin{remark}
There are conditions on the $a_i$, e.g.
\eqref{eq:corappar}, such that the inner sum in \eqref{eq:sal_prelim}
vanishes for impossible choices
of the $a_i$. One can use this to prove alternate versions of Lemma \ref{lempb}
that are nonobviously equivalent. In particular, if $(\ell - 1) \mid e(\p)$ then one can restrict
to $b_i \in 2\Z \cup \{ m(\p_i) \}$ with suitably modified $Q(p^{b/e(\p/p)}, s)$.
\end{remark}
\smallskip

\begin{definition}\label{defB}
\begin{enumerate}\item We let $\mathcal{B}$ be the set of formal products of 
the form \\$\b=\prod_{p_i\mid \ell}p_i^{b_i/e(\p_i/p_i)}$, where the $b_i$ are
such that $0\le b_i\le m(\p_i)$ and $b_i\in\Z\cup \{m(\p_i)\}$.
\item We will consider any $\b\in\mathcal{B}$ as an ideal of $K$, where by
abuse of language we accept to have fractional powers of prime ideals of $K$, 
and we will set $\b_z=\b\Z_{K_z}$, which is a true ideal of $K_z$ stable by
$\tau$, and also by $\tau_2$ in the general case.
\item If $\b\in\mathcal{B}$ as above, we set
$$\NC(\b)=\prod_{p_i\mid\b}\N(p_i)^{\lceil b_i\rceil/e(\p_i/p_i)}\text{\quad and\quad}P(\b,s)=
\prod_{p_i\mid\b}\widetilde{Q}(p_i^{b_i/e(\p_i/p_i)},s)\;.$$
where $\widetilde{Q}(p^{b/e(\p/p)},s) := Q(p^{b/e(\p/p)},s)$
except in the case $(\ell - 1) \mid e(\p)$ and $b = 0$, where we set
$\widetilde{Q}(p^{b/e(\p/p)},s) = 1$.
\end{enumerate}
\end{definition}

We thus obtain
\begin{equation}\label{eqn:before_reb}
\sum_{(\a_0,\dots,\a_{\ell-2})\in J}\dfrac{S_{\al}(s)}{\N(\a_{\al})^s}=
\sum_{\b\in\mathcal{B}} \NC (\b)^s P(\b,s)\sum_{\substack{(\a_0,\dots,\a_{\ell-2})\in J\\  (\a_{\al},\b)=1\\ \p\nmid \b\text{ and }(\ell-1)\mid e(\p)\Rightarrow \p\mid \a_{\al} }}\dfrac{f_{\al}(\b)}{\N(\a_{\al})^s}\;.
\end{equation}
The case $\p \nmid \b$, $(\ell - 1) \mid e(\p)$, and $\p \nmid \a_{\alpha}$ is precisely that for which
$Q(p^{b/e(\p/p)},s) = 0$ and $\widetilde{Q}(p^{b/e(\p/p)},s) = 1$. By excluding this case
we may substitute $\widetilde{Q}$ for $Q$ with $\widetilde{Q}(p^0, s) = 1$.

\begin{definition}\label{defd3}\begin{enumerate}\item For $\b$ as above we 
define
$$\gothr^e(\b)=\prod_{\substack{\p\mid \ell\Z_K,\ \p\nmid \b\\ (\ell-1)\mid e(\p)}}\p\;.$$
\item We set
$\gd_\ell=\prod_{p\in\calD_\ell}p$ (see Definition \ref{defcald}).
\end{enumerate}
\end{definition}

\begin{remark}\label{remre} 

Since $e(\p)=e(\p/p)e(p)\le 2[k:\Q]$,
we note that if $\ell>2[k:\Q]+1$ then $\gothr^e(\b)$ is always trivial, so that
specializing to the case $k=\Q$ and $\ell\ge5$ now would avoid some complications.

\end{remark}

\begin{lemma}\label{lem:reb} For each $\a_{\al}$ appearing in the inner sum of \eqref{eqn:before_reb} we have
\begin{equation}\label{eqn:reb_eq}
(\a_{\al},\ell\Z_K)=\gothr^e(\b) = \prod_{\substack{p \in D_{\ell} \\ (p, \b) = 1}} \prod_{\p | p} \p
\;,
\end{equation}
so that $\gothr^e(\b)\mid\gd_\ell$.

Additionally, in the special case with $\ell \equiv 1 \pmod 4$ we have
$\gothr^e(\b) = \prod_{\substack{p \in D_{\ell} \\ (p, \b) = 1}} p.
$
\end{lemma}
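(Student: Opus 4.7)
The plan is to establish the identities by a prime-by-prime analysis at primes $\p$ of $K$ above $\ell$. For such a $\p$ over $p$, I would first determine when $\p \mid \a_\al$: on one hand, $\p \mid \a_\al$ forces some $\p_z \mid \p$ to divide an $\a_i$, and so Corollary \ref{corsplit1} (read as $\calD \cup \calD_\ell$) forces $p \in \calD_\ell$ since $p \mid \ell$; on the other hand, the inner-sum constraints $(\a_\al,\b)=1$ and ``$\p \nmid \b$ and $(\ell-1)\mid e(\p) \Rightarrow \p\mid\a_\al$'' force the expected converse.

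The main auxiliary point is the equivalence $p \in \calD_\ell \Leftrightarrow (\ell-1)\mid e(\p)$ for $p \mid \ell$, and I expect this to be the main subtlety. The forward direction is uniform across both cases: condition (1) of Definition \ref{defcald} says $\p$ is totally split in $K_z/K$, hence unramified there, which by Lemma \ref{condkz} applied to $K_z/K$ forces $(\ell-1)\mid e(\p)$. (The residue field extension is automatically trivial in characteristic $\ell$, since $\zeta_\ell$ reduces to $1$.) The reverse direction is not unconditional, but holds in the context of the inner sum: if $(\ell-1)\mid e(\p)$ and $\p\nmid\b$ but $p\notin\calD_\ell$, then the constraint forces $\p\mid\a_\al$, yet Corollary \ref{corsplit1} forces $p \in \calD_\ell$, a contradiction. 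So either $p \in \calD_\ell$ for every such $\p$, or the inner sum is empty and the lemma is vacuous.

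For the multiplicities: $\a_\al \Z_{K_z} = \prod_i \a_i$ is squarefree since the $\a_i$ are coprime squarefree, and when $(\ell-1)\mid e(\p)$, Proposition \ref{proppzp} yields $e(\p_z/\p)=1$, so squarefreeness at $\p_z$ transfers to $v_\p(\a_\al)\leq 1$. Combining with the above,
\[
(\a_\al, \ell\Z_K) = \prod_{\substack{p \in \calD_\ell\\(p,\b)=1}}\ \prod_{\p\mid p}\p,
\]
which matches $\gothr^e(\b)$ by its definition and the characterization just proved. The divisibility $\gothr^e(\b)\mid\gd_\ell$ is then immediate from $\gd_\ell\Z_K=\prod_{p\in\calD_\ell}\prod_{\p\mid p}\p^{e(\p/p)}$, each exponent being at least $1$.

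For the final claim in the special case with $\ell\equiv 1\pmod 4$: condition (3) of Definition \ref{defcald} says $p\in\calD_\ell$ is totally split in $K_z/k$, and since $K\subseteq K_z$ here, $p$ is split in $K/k$; hence $p\Z_K = \prod_{\p\mid p}\p$ and each inner product collapses to $p$, giving the simplified formula.
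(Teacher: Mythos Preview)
Your proof is correct and follows essentially the same approach as the paper's: a prime-by-prime comparison using Corollary~\ref{corsplit1} to force $p\in\calD_\ell$ (equivalently $(\ell-1)\mid e(\p)$) whenever $\p\mid\a_\al$, and the inner-sum constraint for the converse. You are somewhat more explicit than the paper about the multiplicity $v_\p(\a_\al)\le 1$ and about the conditional nature of the auxiliary equivalence, and you invoke Lemma~\ref{condkz} where the paper appeals directly to Proposition~\ref{proppzp}, but these are minor reformulations of the same argument rather than a different route.
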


\begin{proof} If $\p\nmid\b$ and $(\ell-1)\mid e(\p)$
then clearly $\p\mid\a_{\al}$. Conversely, let $\p\mid\a_{\al}$ be above $\ell$. Since 
$(\a_{\al},\b)=1$ we know that $\p\nmid\b$. If we had
$(\ell-1)\nmid e(\p)$, Proposition \ref{proppzp} would imply that 
$e(\p_z/\p)>1$, 
contradicting Corollary \ref{corsplit1}. This proves the first equality of \eqref{eqn:reb_eq}, and
the rest follows similarly. 
\end{proof}

Thus we obtain
\begin{equation}\label{eq:jsum}
\sum_{(\a_0,\dots,\a_{\ell-2})\in J}\dfrac{S_{\al}(s)}{\N(\a_{\al})^s}=
\sum_{\substack{\b\in\mathcal{B}\\\gothr^e(\b)\mid\gd_\ell}} \NC (\b)^s
  P(\b,s)\sum_{\substack{(\a_0,\dots,\a_{\ell-2})\in J
 \\ (\a_{\al},\ell\Z_K)=\gothr^e(\b)}}
\dfrac{f_{\al}(\b)}{\N(\a_{\al})^s}.
\end{equation}

To compute $f_{\al}(\b)$ we set the following definition:

\begin{definition}\label{defsb} For any ideal $\b\in\calB$, and for any subset $T$
of $\F_\ell[G]$, we set
$$S_{\b_z}(K_z)[T]=\{\ov{u}\in S_{\ell}(K_z)[T],\ x^\ell/u\equiv1\pmod{^*\b_z}\text{ soluble}\}\;,$$
where $u$ is any lift of $\ov{u}$ coprime to $\b_z$, and the congruence is
in $K_z$.\end{definition}

\begin{lemma}\label{lem:sizefal} Let $(\a_0,\dots,\a_{\ell-2})$ satisfy condition (a) of
Proposition \ref{propbij2}, suppose that $\alpha$ satisfies the condition described before
Definition \ref{def:fal}, 
and recall that we set $\a=\prod_i\a_i^{g^i}$. We 
have
$$f_{\al}(\b)=\begin{cases}|S_{\b_z}(K_z)[T]|&\text{\quad if $\ov{\a}\in\Cl_{\b_z}(K_z)^\ell$\;,}\\
0&\text{\quad otherwise.}\end{cases}$$
\end{lemma}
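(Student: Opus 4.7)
The plan is to separate the solubility condition into an ideal-theoretic necessary condition and a multiplicative condition on virtual units, then count the valid virtual units subject to $T$-annihilation. First I would establish the necessary direction: if $x^\ell/(\al u) \equiv 1 \pmod{^* \b_z}$ is soluble, write $x^\ell = \al u y$ with $y \equiv 1 \pmod{^*\b_z}$ and take ideals to get $(x)^\ell = \a \q_0^\ell \q_u^\ell (y)$, where $(u) = \q_u^\ell$. Since $y \equiv 1 \pmod{^*\b_z}$ the principal ideal $(y)$ is trivial in $\Cl_{\b_z}(K_z)$, so $[\a] = [(x)]^\ell[\q_0\q_u]^{-\ell}$ is an $\ell$-th power in $\Cl_{\b_z}(K_z)$, giving $f_\al(\b)=0$ in the case $\ov{\a}\notin\Cl_{\b_z}(K_z)^\ell$.

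Conversely, assume $\ov{\a} \in \Cl_{\b_z}(K_z)^\ell$; the outer summation conditions of \eqref{eq:jsum} ensure $\a$ is coprime to $\b_z$, so this class is well-defined. I first exhibit an initial valid representative: write $\a = \mathfrak{T}^\ell(\gamma)$ with $\mathfrak{T}$ coprime to $\b_z$ and $\gamma \equiv 1 \pmod{^*\b_z}$. Then $(\al/\gamma) = (\mathfrak{T}\q_0)^\ell$, so $u'_0 := \gamma/\al$ is a virtual unit, and $\al u'_0 = \gamma$ trivially satisfies the congruence with $x = 1$. The set $A_\al := \{\ov{u} \in S_\ell(K_z) : x^\ell \equiv \al u \pmod{^*\b_z} \text{ soluble}\}$ is then a coset of $S_{\b_z}(K_z)$ inside $S_\ell(K_z)$: the ratio of two valid elements satisfies the quotient congruence, and multiplying any valid element by $\ov{w} \in S_{\b_z}(K_z)$ preserves solubility. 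Consequently, once I produce $u_0 \in A_\al$ with $\ov{u_0} \in S_\ell(K_z)[T]$, the map $\ov{v} \mapsto \ov{u_0 v}$ is a bijection $S_{\b_z}(K_z)[T] \to A_\al \cap S_\ell(K_z)[T]$, yielding the claimed cardinality.

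The main obstacle is producing such a $T$-annihilated representative. I exploit the fact that $\ov{\al}$ is itself $T$-annihilated in $K_z^*/{K_z^*}^\ell$: applying any $t \in T$ to $\al u'_0 \equiv x^\ell \pmod{^*\b_z}$ gives $t(u'_0) \equiv t(x)^\ell/t(\al) \pmod{^*\b_z}$, and since $t(\al) \in {K_z^*}^\ell$ we conclude that $t\ov{u'_0} \in S_{\b_z}(K_z)$ for each $t \in T$. Now $\F_\ell[G]$ is semisimple since $\ell \nmid |G|$, so writing $e_T := \prod_{t \in T} \i(t)$ for the idempotent projector onto $M[T]$ (which is a genuine idempotent, since the commuting $\i(t)$ are), I decompose $\ov{u'_0} = e_T \ov{u'_0} + (1-e_T)\ov{u'_0}$. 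On each isotypic component of $(1-e_T)S_\ell(K_z)$ some $t \in T$ acts as a nonzero scalar in $\F_\ell$, hence invertibly; combined with $t(1-e_T)\ov{u'_0} \in S_{\b_z}(K_z)$ this forces $(1-e_T)\ov{u'_0} \in S_{\b_z}(K_z)$. Setting $\ov{u_0} := e_T \ov{u'_0}$ thus differs from $\ov{u'_0}$ by an element of $S_{\b_z}(K_z)$, so it still lies in the coset $A_\al$ while being $T$-annihilated, completing the argument.
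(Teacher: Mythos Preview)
Your proof is correct. The paper itself omits the argument entirely, referring to Lemma~5.3 of \cite{Coh-Mor}, so there is no explicit proof to compare against; your write-up is precisely the kind of ``direct generalization'' the authors allude to. The two essential ingredients---the ideal computation showing that solubility forces $\ov{\a}\in\Cl_{\b_z}(K_z)^\ell$, and the coset argument reducing the count to $|S_{\b_z}(K_z)[T]|$ once a single $T$-annihilated representative is found---match what one would expect from the cubic case.

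The one place where you do a little more than strictly necessary is the construction of the $T$-annihilated representative: your use of the idempotent $e_T=\prod_{t\in T}\iota(t)$ and the isotypic decomposition is clean and correct (and relies on $S_{\b_z}(K_z)$ being a $G$-submodule, which you implicitly use and which holds because $\b_z$ is $G$-stable). An equivalent route, closer in spirit to Lemma~\ref{lem:bij_tech} of the present paper, is to treat the elements of $T$ one at a time, replacing $u'_0$ successively via that lemma; both approaches exploit the same semisimplicity of $\F_\ell[G]$.

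Two small points worth making explicit in a final write-up: (i) that one may choose the fixed $\alpha$ coprime to $\b_z$ (legitimate since $\a$ is coprime to $\b_z$ under the summation conditions of \eqref{eq:jsum}, and $\alpha$ is only determined up to $\ell$th powers), and (ii) that $S_{\b_z}(K_z)$ is indeed stable under $G$, which you need when projecting onto isotypic components. Neither is a gap, just an omitted routine verification.
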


\begin{proof} 
The lemma and its proof are a direct generalization of Lemma 5.3 of \cite{Coh-Mor}, and we omit the details.
\end{proof}

\section{Computation of $|S_{\b_z}(K_z)[T]|$}\label{sec:comp_gb}

In this section we compute the size of the group $S_{\b_z}(K_z)[T]$ appearing in Lemma
\ref{lem:sizefal}, as well as several related quantities.

\begin{lemma}\label{lemfal3} Set $Z_{\b_z}=(\Z_{K_z}/\b_z)^*$. Then
$$|S_{\b_z}(K_z)[T]|=\dfrac{|(U(K_z)/U(K_z)^\ell)[T]| 
|(\Cl_{\b_z}(K_z)/\Cl_{\b_z}(K_z)^\ell)[T]|}{|(Z_{\b_z}/Z_{\b_z}^\ell)[T]|}\;,$$
and in particular
$$|S_\ell(K_z)[T]|=|(U(K_z)/U(K_z)^\ell)[T]||(\Cl(K_z)/\Cl(K_z)^\ell)[T]|\;.$$
\end{lemma}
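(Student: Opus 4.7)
The plan is to build a five-term exact sequence of $\F_\ell[G]$-modules, apply the exact functor $[T]$, and then use multiplicativity of orders together with Lemma \ref{lemseq}.

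First I would define the natural map $\psi : S_\ell(K_z) \to Z_{\b_z}/Z_{\b_z}^\ell$ by sending $\ov{u}$ to the class modulo $\b_z$ of any lift $u$ coprime to $\b_z$. Such a lift exists because $v_{\p_z}(u) \equiv 0 \pmod{\ell}$ for every $\p_z \mid \b_z$, so that one may multiply $u$ by a suitable $\ell$-th power to clear those valuations. Any two such lifts differ by an $\ell$-th power coprime to $\b_z$, so $\psi$ is well defined, and by construction $\ker(\psi) = S_{\b_z}(K_z)$.

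Next I would combine $\psi$ with the classical four-term exact sequence
$$U(K_z) \longrightarrow Z_{\b_z} \longrightarrow \Cl_{\b_z}(K_z) \longrightarrow \Cl(K_z) \longrightarrow 1,$$
whose middle map sends $\ov{a}$ to $[a\Z_{K_z}]$. Applying the right-exact functor $M \mapsto M/M^\ell$ to the tail gives exactness of
$$Z_{\b_z}/Z_{\b_z}^\ell \xrightarrow{f} \Cl_{\b_z}(K_z)/\Cl_{\b_z}(K_z)^\ell \longrightarrow \Cl(K_z)/\Cl(K_z)^\ell \longrightarrow 0.$$
The key identity is $\mathrm{im}(\psi) = \ker(f)$: one inclusion is immediate, since a virtual unit $u$ satisfies $u\Z_{K_z} = \q^\ell \in \Cl_{\b_z}(K_z)^\ell$. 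For the converse, if $\ov{a} \in \ker(f)$ I can write $a\Z_{K_z} = \q^\ell\,\gamma\Z_{K_z}$ with $\gamma \equiv 1 \pmod{\b_z}$; then $u := a/\gamma$ is a virtual unit with $\psi(\ov{u}) = \ov{a}$, since $\ov{\gamma} = \ov{1}$ in $Z_{\b_z}$. Splicing yields the five-term exact sequence
$$0 \to S_{\b_z}(K_z) \to S_\ell(K_z) \xrightarrow{\psi} Z_{\b_z}/Z_{\b_z}^\ell \xrightarrow{f} \Cl_{\b_z}(K_z)/\Cl_{\b_z}(K_z)^\ell \to \Cl(K_z)/\Cl(K_z)^\ell \to 0.$$

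All five terms are $\F_\ell[G]$-modules, using that $\b_z$ is $G$-stable (Definition \ref{defB}). Since $|G|$ divides $2(\ell-1)$ and is coprime to $\ell$, Maschke's theorem implies $\F_\ell[G]$ is semisimple, so the functor $M \mapsto M[T]$ is projection onto an idempotent direct summand (the product of the commuting idempotents $\i(t)$ for $t \in T$), and in particular is exact. Multiplicativity of orders along the resulting exact sequence yields
$$|S_{\b_z}(K_z)[T]| \cdot |(Z_{\b_z}/Z_{\b_z}^\ell)[T]| \cdot |(\Cl(K_z)/\Cl(K_z)^\ell)[T]| = |S_\ell(K_z)[T]| \cdot |(\Cl_{\b_z}(K_z)/\Cl_{\b_z}(K_z)^\ell)[T]|.$$
Substituting $|S_\ell(K_z)[T]| = |(U(K_z)/U(K_z)^\ell)[T]| \cdot |\Cl(K_z)[\ell][T]|$ from Lemma \ref{lemseq}, using the elementary fact $|\Cl(K_z)[\ell][T]| = |(\Cl(K_z)/\Cl(K_z)^\ell)[T]|$ (apply the idempotents to the $\ell$-part of $\Cl(K_z)$), and canceling that common factor, gives the first formula. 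The second formula is the special case $\b_z = \Z_{K_z}$, where $Z_{\b_z}$ is trivial and $\Cl_{\b_z}(K_z) = \Cl(K_z)$. The main obstacle is verifying $\mathrm{im}(\psi) = \ker(f)$, which requires careful bookkeeping within the ray class group; once that is in place, the rest is a formal order count.
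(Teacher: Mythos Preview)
Your argument is correct and is precisely the standard route: the paper itself does not spell out a proof but simply cites Corollary~2.13 of \cite{CoDiOl3}, and the argument there is exactly the five-term exact sequence you construct, followed by the semisimplicity observation that $M\mapsto M[T]$ is exact because $\ell\nmid|G|$. So there is nothing genuinely different here; you have filled in the details the paper delegates to the reference.
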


\begin{proof}
This is a minor variant of Corollary 2.13 of \cite{CoDiOl3}, proved
in the same way.
\end{proof}

The quantity $|(U(K_z)/U(K_z)^\ell)[T]|$ is given by the following lemma.

\begin{lemma}\label{lemu5} Assume that $\ell > 3$, the case $\ell = 3$ being treated in \cite[Lemma 5.4]{Coh-Mor}.
For any number field $M$, write
$\rk_\ell(U(M)):=\dim_{\F_\ell}(U(M)/U(M)^\ell)$, 
and denote by $r_1(M)$ and $r_2(M)$ the number
of real and pairs of complex embeddings of $M$.
\begin{enumerate}\item For any number field $M$ we have
$$\rk_\ell(U(M))=\begin{cases} 
r_1(M)+r_2(M)-1 & \text{ if $\ze\notin M$,}\\
r_1(M)+r_2(M) & \text{ if $\ze\in M$.}\end{cases}$$
\item We have $|(U(K_z)/U(K_z)^\ell)[T]|=\ell^{RU(K)}$, where
$$RU(K):=\begin{cases}
r_2(K)-r_2(k)&\text{\quad in the general case,}\\
r_1(k)+r_2(k)&\text{\quad in the special case with $\ell\equiv3\pmod4$\;,}\\
r_2(k)&\text{\quad in the special case with $\ell\equiv1\pmod4$\;.}
\end{cases}$$
\item In particular, if $k=\Q$ we have $RU(K)=r_2(K)$ in all cases.
\end{enumerate}
\end{lemma}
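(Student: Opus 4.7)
Part~(1) reduces to the classical Dirichlet unit theorem: $U(M) \cong \mu(M) \times \Z^{r_1(M)+r_2(M)-1}$, so
$$U(M)/U(M)^\ell \cong \bigl(\mu(M)/\mu(M)^\ell\bigr) \oplus \F_\ell^{r_1(M)+r_2(M)-1},$$
and since $\mu(M)$ is cyclic, $|\mu(M)/\mu(M)^\ell|$ is $\ell$ or $1$ according to whether $\ell \mid |\mu(M)|$, which for odd $\ell$ is equivalent to $\ze \in M$. For (2), set $M := U(K_z)/U(K_z)^\ell$. By Lemma~\ref{lem24}, $M[T]$ equals the isotypic component $e_1 e_{2,1} M$ in the general case and $e_{(\ell+1)/2} M$ in the special case, corresponding to the $\F_\ell$-characters $\chi$ of $G$ given respectively by $\chi(\tau_2) = -1, \chi(\tau) = g$ and $\chi(\tau) = -g = g^{(\ell+1)/2}$. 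Because $\ell > 3$, both characters are nontrivial and distinct from the character $\tau_2 \mapsto 1,\ \tau \mapsto g$ by which $G$ acts on $\mu_\ell \subset K_z^*$.

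Consider the $G$-equivariant sequence $0 \to \mu(K_z) \to U(K_z) \to W \to 0$ with $W := U(K_z)/\mu(K_z)$ torsion-free. Tensoring with $\F_\ell$ (exact on the left because $W$ is $\Z$-free) gives an exact sequence $0 \to \mu_\ell \to M \to W/W^\ell \to 0$ of $\F_\ell[G]$-modules. Taking the $\chi$-isotypic component kills the torsion contribution by the previous paragraph, so
$$\dim_{\F_\ell} M[T] \;=\; \dim_{\F_\ell}(W/W^\ell)_\chi \;=\; \dim_\Q(W\otimes\Q)_\chi.$$
The logarithmic embedding identifies $W\otimes\Q$ as the $\Q[G]$-module $\bigl(\bigoplus_{w\in S_\infty(k)} \Ind_{G_w}^G \Q\bigr)/\Q$, where $G_w$ is the decomposition group. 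Frobenius reciprocity applied to the nontrivial $\chi$ at hand then yields
$$\dim_\Q(W\otimes\Q)_\chi \;=\; \#\bigl\{w\in S_\infty(k): \chi|_{G_w}=1\bigr\}.$$

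It remains to identify $c_w$, a generator of $G_w$, for each real $w$ (complex $w$ automatically contribute $1$). Since $k_z=k(\ze)$ and hence $K_z$ is totally complex whenever $k$ has a real place, $G_w$ is generated by a single complex conjugation $c_w$. Its restriction to $k_z$ must send $\ze \mapsto \ze^{-1}$, forcing $c_w|_{k_z}=\tau^{(\ell-1)/2}$; in the general case, its restriction to $K$ equals the identity or $\tau_2$ according to whether $w$ splits or ramifies (is inert) in $K/k$, so $c_w \in \{\tau^{(\ell-1)/2},\ \tau^{(\ell-1)/2}\tau_2\}$ respectively. Using $g^{(\ell-1)/2}=-1$, one computes: in the general case only complex $w$ and real $w$ ramified in $K/k$ contribute, totaling $r_2(k)+(r_1(k)-r_1^{\mathrm{sp}}(k)) = r_2(K)-r_2(k)$ via the identity $r_2(K) = 2r_2(k)+(r_1(k)-r_1^{\mathrm{sp}}(k))$; in the special case $\chi(c_w) = (-g)^{(\ell-1)/2} = (-1)^{(\ell+1)/2}$, which equals $+1$ exactly when $\ell\equiv 3\pmod 4$, yielding the two stated subcases. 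Part~(3) is then immediate from $r_1(\Q)=1,\ r_2(\Q)=0$ together with the observation that $K=\Q(\sqrt{\ell^*})$ is real (resp.~imaginary) quadratic according as $\ell\equiv 1$ (resp.~$3$) $\pmod 4$. The main obstacle is the archimedean decomposition-group bookkeeping, especially the split-versus-ramified dichotomy for $c_w$ in the general case; the supplementary verification that $\chi$ differs from the $\mu_\ell$-character genuinely uses $\ell > 3$ (the only case where $-g=g$ would force $\ell=2$, while the general-case nontriviality follows already from $\chi(\tau_2)=-1$).
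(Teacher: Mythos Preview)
Your argument is correct and takes a genuinely different route from the paper's. In the general case the paper constructs the exact sequence
\[
1 \longrightarrow \dfrac{U(k_z)}{U(k_z)^\ell}[\tau-g] \longrightarrow \dfrac{U(K_z)}{U(K_z)^\ell}[\tau-g] \longrightarrow \dfrac{U(K_z)}{U(K_z)^\ell}[T] \longrightarrow 1
\]
(with surjection $\eps\mapsto\tau_2(\eps)/\eps$ and exactness in the middle proved via Hilbert~90), and then quotes Herbrand's theorem from \cite[Theorem~2.3]{CoDiOl3} as a black box to evaluate the two $[\tau-g]$ pieces as $\ell^{r_2(K)+1}$ and $\ell^{r_2(k)+1}$; the special case is a direct appeal to the same citation. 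You instead identify $M[T]$ as a single $\chi$-isotypic component, strip off the torsion by checking that $\chi$ differs from the cyclotomic character, and compute the multiplicity of $\chi$ in the unit lattice via the logarithmic embedding and Frobenius reciprocity at the archimedean places---in effect reproving the relevant instance of Herbrand's theorem from scratch. Your approach is more self-contained and makes the infinite-place structure transparent; the paper's is shorter once Herbrand is available and avoids the case analysis on $c_w$.

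Two small remarks. First, your equality $\dim_{\F_\ell}(W/W^\ell)_\chi = \dim_\Q(W\otimes\Q)_\chi$ should strictly be read over $\Q_\ell$ or $\C$: the character values lie in $\mu_{\ell-1}\subset\Z_\ell^*$, so $\Z_\ell[G]$ splits completely and the $\F_\ell$- and $\Q_\ell$-multiplicities agree; over $\Q$ itself the isotypic component has the wrong dimension. Second, your closing parenthetical inverts the logic: the comparison $\chi\ne\chi_{\mu_\ell}$ holds for every odd $\ell$ (in the special case $-g=g$ would force $\ell=2$, as you note), whereas what actually fails for $\ell=3$ is the \emph{nontriviality} of $\chi$ in the special case ($-g=1$ when $g=-1$), which is what you need so that the quotient by the trivial representation does not affect the count.
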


\begin{proof} (1) is Dirichlet's theorem, and (3) is a consequence of (2). 
To prove (2) in the general case, where $T=\{\tau_2+1,\tau-g\}$,
we apply the exact sequence
\begin{equation}\label{eq:es_52}
1\LR\dfrac{U(k_z)}{U(k_z)^\ell}[\tau-g]\LR\dfrac{U(K_z)}{U(K_z)^\ell}[\tau-g]\LR\dfrac{U(K_z)}{U(K_z)^\ell}[\tau_2+1,\tau-g]\LR1\;,
\end{equation}
where the last nontrivial map sends
$\eps$ to $\tau_2(\eps)/\eps$. Surjectivity follows from
Lemma \ref{lem24}, and
$(\tau_2+1)(\tau_2-1)=0$ implies that the two nontrivial maps compose to zero.
Finally, suppose $\eps\in U(K_z)$
satisfies $\tau_2(\eps)=\eps\eta^\ell$ for some $\eta\in K_z$. Applying
$\tau_2$ to both sides we see that $\eta\tau_2(\eta)=\ze^a$ for some $a$,
and replacing $\eta$ with $\eta_1 = \eta \ze^b$ with $a+2b\equiv0\pmod\ell$,
we obtain
$\eta_1\tau_2(\eta_1)=1$ and 
$\tau_2(\eps)=\eps\eta_1^\ell$. By Hilbert 90 
there exists $\eta_2$ with $\eta_1=\eta_2/\tau_2(\eta_2)$, so that $\eps_1=\eps\eta_2^\ell$
satisfies $\tau_2(\eps_1)=\eps_1$, in other words $\eps_1\in k_z$, proving exactness of
\eqref{eq:es_52}.

By a nontrivial theorem of Herbrand (see Theorem
2.3 of \cite{CoDiOl3}), we have
$|(U(K_z)/U(K_z)^\ell)[\tau-g]|=\ell^{r_2(K)+1}$ and
$|(U(k_z)/U(k_z)^\ell)[\tau-g]|=\ell^{r_2(k)+1}$, establishing (2) in the general case.

\smallskip

In the special case, with
$T=\{\tau+g\}=\{\tau-g^{(\ell+1)/2}\}$, (2) follows directly from Herbrand's theorem
applied to the extension $k_z / k = K_z /k$, for which $\tau$ generates the Galois group.
\end{proof}

Note that for $\ell = 3$ the same is true except that in the special case we 
have $RU(K) = r_1(k) + r_2(k) - 1$. This follows from the shape
of \cite[Theorem 2.3]{CoDiOl3}, or may be easily verified from 
\cite[Lemma 5.4]{Coh-Mor}.

%We now need to compute $|(Z_{\b_z}/Z_{\b_z}^\ell)[T]|$.

\begin{lemma}\label{lemzz} Let $\b\in\calB$ 
satisfy $\b_z\mid(1-\ze)^\ell$, and define
$\c_z=\prod_{\substack{\p_z\subset K_z\\\p_z\mid\b_z}}\p_z^{\lceil v_{\p_z}(\b_z)/\ell\rceil}$. We have
$$|(Z_{\b_z}/Z_{\b_z}^\ell)[T]|=|(\c_z/\b_z)[T]|\;,$$
the latter being considered as an additive group.
\end{lemma}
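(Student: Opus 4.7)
The goal is to prove $|(Z_{\b_z}/Z_{\b_z}^\ell)[T]|=|(\c_z/\b_z)[T]|$, and the plan is in fact to establish the stronger assertion that $Z_{\b_z}/Z_{\b_z}^\ell\isom \c_z/\b_z$ as $\F_\ell[G]$-modules; applying $[T]$ to both sides then gives the claim. By CRT, $Z_{\b_z}=\prod_{\p_z\mid\b_z}(\Z_{K_z}/\p_z^{n_{\p_z}})^*$ where $n_{\p_z}=v_{\p_z}(\b_z)$, and correspondingly $\c_z/\b_z=\bigoplus_{\p_z\mid\b_z}\p_z^{\lceil n_{\p_z}/\ell\rceil}/\p_z^{n_{\p_z}}$. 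Since $G$ permutes primes over a given $p\mid\ell$, both decompositions group into induced modules over the decomposition subgroups $D(\p_z/p)$, so it suffices to establish the isomorphism locally at each $\p_z$.

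At a single prime, write $n=n_{\p_z}$ and factor $(\Z_{K_z}/\p_z^n)^*\isom\F_{\p_z}^*\times U_1^{(n)}$ where $U_1^{(n)}=(1+\p_z)/(1+\p_z^n)$. Because $\p_z\mid\ell$ we have $\gcd(\ell,|\F_{\p_z}^*|)=1$, so $\F_{\p_z}^*$ is killed by passage to $\ell$-th power quotient, and the local problem reduces to $U_1^{(n)}/(U_1^{(n)})^\ell\isom\p_z^{\lceil n/\ell\rceil}/\p_z^n$ as $D(\p_z/p)$-modules. The first key input is that the hypothesis $\b_z\mid(1-\ze)^\ell$ implies $(\ell-1)\mid e(\p_z/\ell)$ with $n\le\ell e(\p_z/\ell)/(\ell-1)$; expanding $(1+x)^\ell$ for $x\in\c_z=\p_z^{\lceil n/\ell\rceil}$ and checking term-by-term that $v_{\p_z}(\ell x^i)\geq e+\lceil n/\ell\rceil\geq n$ for $1\le i\le\ell-1$ and $v_{\p_z}(x^\ell)\geq n$ shows $(1+\c_z)^\ell\subseteq 1+\b_z$. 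Thus the inclusion induces a $D(\p_z/p)$-equivariant map $(1+\c_z)/(1+\b_z)\hookrightarrow U_1^{(n)}[\ell]$, where $|U_1^{(n)}[\ell]|=|U_1^{(n)}/(U_1^{(n)})^\ell|$ by the snake lemma on multiplication by $\ell$.

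For the size equality, filter both sides by the subgroups $U_i^{(n)}=(1+\p_z^i)/(1+\p_z^n)$ and $\p_z^i/\p_z^n$. The successive quotients $U_i/U_{i+1}\isom\p_z^i/\p_z^{i+1}\isom\F_{\p_z}$ are matched, and a graded computation of the $\ell$-th power map on $U_1^{(n)}$ (using $(1+x)^\ell\equiv 1+\ell x+x^\ell\pmod{\text{higher}}$, separating the cases $i<e/(\ell-1)$, $i=e/(\ell-1)$, $i>e/(\ell-1)$) shows that $(U_1^{(n)})^\ell\subseteq 1+\p_z^{n-\lceil n/\ell\rceil}$ and that this inclusion is an equality modulo lower filtration, giving $|U_1^{(n)}/(U_1^{(n)})^\ell|=q^{n-\lceil n/\ell\rceil}=|\p_z^{\lceil n/\ell\rceil}/\p_z^n|$. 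Together with the injection above this forces $(1+\c_z)/(1+\b_z)\isom U_1^{(n)}[\ell]\isom U_1^{(n)}/(U_1^{(n)})^\ell$; the first isomorphism is $D(\p_z/p)$-equivariant essentially tautologically because $x\mapsto 1+x$ commutes with the Galois action on $\Z_{K_z}$.

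The main obstacle is not the size equality but the $G$-equivariance of the identification $U_1^{(n)}/(U_1^{(n)})^\ell\isom(1+\c_z)/(1+\b_z)\isom\c_z/\b_z$: the map $x\mapsto 1+x$ fails to be a group homomorphism because the cross term $xy$ need not lie in $\b_z$ when $2\lceil n/\ell\rceil<n$. The way around this is to observe that after modding out by $\ell$-th powers, only the graded pieces of the filtration contribute, and on each graded piece the map $x\mapsto 1+x$ \emph{is} a $D(\p_z/p)$-equivariant isomorphism. Assembling these graded isomorphisms (or equivalently, comparing $\chi$-isotypic components under the semisimple action of $\F_\ell[G]$, which applies because $\ell\nmid|G|$) yields the required module isomorphism. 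Assuming the cited reference \cite[Corollary 2.13]{CoDiOl3} already encodes this local-to-global passage, most of the above is a direct specialization, which is presumably what is meant by ``proved in the same way.''
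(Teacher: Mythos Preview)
Your sketch is essentially correct and amounts to unpacking the content of the references the paper cites (Proposition~2.6 and Theorem~2.7 of \cite{CoDiOl3}, not Corollary~2.13 as you write at the end). A few points deserve tightening, though none is fatal. First, your assertion that $\b_z\mid(1-\ze)^\ell$ implies $(\ell-1)\mid e(\p_z/\ell)$ is not a consequence of that divisibility; rather it holds automatically because $\ze\in K_z$ (see Proposition~\ref{proppzp}). Second, your size computation can be sharpened to an equality: the analysis of the $\ell$th power map actually shows directly that $U_1^{(n)}[\ell]=(1+\p_z^{\lceil n/\ell\rceil})/(1+\p_z^n)$, so no separate cardinality comparison is needed. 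Third, your handling of $G$-equivariance is the right idea but slightly conflates two separate passages: the bijection $x\mapsto 1+x$ between $\c_z/\b_z$ and $(1+\c_z)/(1+\b_z)$, and the comparison of $U_1^{(n)}[\ell]$ with $U_1^{(n)}/(U_1^{(n)})^\ell$. Both are handled by the same semisimplicity argument you invoke---since $\ell\nmid|G|$ and $G$ is abelian with $|G|\mid 2(\ell-1)$, every finite $\ell$-group with $G$-action splits into character eigenspaces over $\Z_\ell$, so that $M[\ell]\isom M/\ell M$ as $\F_\ell[G]$-modules, and filtered modules with $G$-isomorphic graded pieces are themselves $G$-isomorphic---but it would be clearer to state this principle once and apply it twice.
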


\begin{proof} See Proposition 2.6 and Theorem 2.7 of \cite{CoDiOl3}, or Lemma 1.5.6 of \cite{Mor}.
\end{proof}

\begin{theorem}\label{thmcb} 

We have in the general case
\begin{equation}
|(\c_z/\b_z)[\tau-g^j]|=\prod_{\p\mid\b_z}\N_{K/\Q}(\p)^{x_j(\p)}\;,
\end{equation}
where 
\begin{equation}\label{eq:xp}
x_j(\p) = 
\Big\lceil v_\p(\b) - \frac{ j e(\p) }{\ell - 1} \Big\rceil - 
\Big\lceil \frac{ \lceil e(\p_z/\p) v_\p(\b) / \ell \rceil (\ell - 1, e(\p)) - je(\p)}{\ell - 1} \Big\rceil.
\end{equation}

In the special case, \eqref{eq:xp} holds with $\p$ and $K$ replaced throughout by
$p$ and $k$ respectively.

Finally, in the general case, then \eqref{eq:xp} is also true with respect to $k_z/k$. In this case one must replace
$\p$, $K$, $\b_z$, and $\c_z$ respectively by $p$, $k$, $b_k := \c_z \cap k_z$, and
\begin{equation}\label{eq:ck}
\c_k := \c_z \cap \Z_{k_z} = 
\prod_{\substack{\p_k\subset k_z\\\p_k\mid\b_k}}\p_k^{\lceil v_{\p_k}(\b_k)/\ell\rceil}.
\end{equation}
\end{theorem}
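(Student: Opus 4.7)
The plan is to reduce to a local computation at each prime $\p_z$ dividing $\b_z$, by exploiting the $\tau$-equivariant filtration of $\c_z/\b_z$ by powers of prime ideals. Since $\b_z$ and $\c_z$ are $G$-stable, the Chinese Remainder Theorem gives an $\F_\ell[\langle \tau\rangle]$-module decomposition
\begin{equation*}
\frac{\c_z}{\b_z} \;\simeq\; \bigoplus_{\p_z \mid \b_z} \frac{\p_z^{\lceil v_{\p_z}(\b_z)/\ell\rceil}}{\p_z^{v_{\p_z}(\b_z)}}.
\end{equation*}
The primes $\p_z$ lying above a fixed $\p\subset K$ form a single $\tau$-orbit of size $h := g(\p_z/\p) = (\ell-1)/e(\p_z/\p) = (\ell-1,e(\p))$ (using Proposition \ref{proppzp} together with $f(\p_z/\p)=1$ for cyclotomic extensions at primes of residue characteristic $\ell$). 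The summands over one orbit form an induced $\F_\ell[\langle\tau\rangle]$-module, and a standard Shapiro-type argument identifies the $(\tau - g^j)$-kernel of such an induced module with the $(\tau^h - g^{jh})$-kernel of a single summand.

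Second, I would analyze the action of the decomposition-group generator $\tau^h$ on the filtration $\p_z^k \supseteq \p_z^{k+1}$ for a fixed $\p_z$ above $\p$. Each graded piece is one-dimensional over the residue field $\Z_{K_z}/\p_z \simeq \Z_K/\p$ of size $\N_{K/\Q}(\p)$. Using that $v_{\p_z}(1-\ze) = e(\p)/(\ell-1,e(\p))$ and that $\tau(1-\ze)/(1-\ze) = 1 + \ze + \cdots + \ze^{g-1} \equiv g \pmod{\p_z}$, one deduces that $\tau$ acts on $\p_z^k/\p_z^{k+1}$ by a residue-field scalar $\omega^k$ with $\omega^{e(\p)/(\ell-1,e(\p))} \equiv g \pmod{\p_z}$, so that $\tau^h - g^{jh}$ is either identically zero on $\p_z^k/\p_z^{k+1}$ or an $\F_q$-linear isomorphism (hence has trivial kernel). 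The zero case occurs precisely on those graded pieces for which $k$ lies in a single residue class modulo $e(\p_z/\p) = (\ell-1)/h$, and each such piece contributes $\N_{K/\Q}(\p)$ to the $\F_\ell$-cardinality.

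Third, I would convert this count into the formula \eqref{eq:xp}. The number of valid indices $k$ in the range $\lceil v_{\p_z}(\b_z)/\ell \rceil \le k < v_{\p_z}(\b_z)$ landing in the specified residue class, after using $v_{\p_z}(\b_z) = e(\p_z/\p)\, v_\p(\b)$ and rewriting the ``ceiling minus ceiling'' pattern coming from the two endpoints of the range, yields the exponent $x_j(\p)$ of $\N_{K/\Q}(\p)$ as written. The special case requires only replacing $\tau$ of order $\ell-1$ acting on $K_z/K$ by $\tau$ acting on $K_z/k$ throughout, so $K$ and $\p$ are replaced by $k$ and $p$. The final claim follows from the identical argument executed in $k_z/k$ instead of $K_z/K$, noting that $\b_k := \c_z \cap k_z$ and its associated $\c_k$ are exactly the right ideals for the $k_z/k$ computation.

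The main obstacle is the bookkeeping in the third paragraph: one must carefully verify that the combinatorics of ``residue class of $k$ modulo $(\ell-1)/h$, counted within the range $[\lceil v_{\p_z}(\b_z)/\ell\rceil, v_{\p_z}(\b_z))$'' collapses to the specific pair of ceiling functions in \eqref{eq:xp}. This is purely arithmetic, and in fact essentially repeats computations made in Proposition 2.6 and Theorem 2.7 of \cite{CoDiOl3} (already invoked in Lemma \ref{lemzz}) for cyclic degree-$\ell$ extensions; the whole proof here amounts to verifying that those computations carry over verbatim with the only change being that the ambient cyclotomic-type extension is now $K_z/K$, $K_z/k$, or $k_z/k$ rather than the one considered in loc. cit.
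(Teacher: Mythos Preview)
Your approach is the same as the paper's, which simply cites the computation on p.~177 of \cite{CoDiOl3} applied successively to $K_z/K$, $K_z/k$, and $k_z/k$; your sketch reconstructs exactly that argument via the CRT decomposition, the induced-module (Shapiro) reduction to a single $\p_z$, and the graded-piece count, as you yourself note in your final paragraph. One small slip to fix: in your second paragraph it is $\tau^h$, not $\tau$, that stabilizes $\p_z$ and hence acts on each $\p_z^k/\p_z^{k+1}$, and the relation coming from $1-\ze$ is then $\omega^{e(\p)/h}\equiv g^h$ rather than $g$ (this does not affect the subsequent count or the final formula).
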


\begin{proof}
This is the result at the bottom of \cite[p. 177]{CoDiOl3}, applied to $K_z/K$, $K_z/k$, and $k_z/k$ respectively.
As in \cite[Theorem 2.7]{CoDiOl3} the result may be simplified if 
$v_\p(\b)$ is either an integer or equal to $\frac{\ell e(\p)}{\ell - 1}$, and in particular always in the general
case with respect to $K_z/K$, but in other cases $v_p$ may be a half integer.

Finally, the equality in \eqref{eq:ck} is readily verified.
\end{proof}

Recall from \cite[Theorem 2.1]{CoDiOl3} and \eqref{eq:pzp} that $e(\p_z/\p) = \frac{\ell - 1}{(\ell - 1, e(\p))}$
and $e(\p_k/p) = \frac{\ell - 1}{(\ell - 1, e(p))}$. In the
special case, this theorem together with Lemma \ref{lemzz} gives the
cardinality of $(Z_{\b_z}/Z_{\b_z}^\ell)[T]$ by choosing $j=(\ell+1)/2$. In the 
general case we require the following additional lemma:

\begin{lemma}\label{lem:27_es}
Assume that we are in the general case and set 
$\c_k=\c_z\cap k_z$ and $\b_k=\b_z\cap k_z$. We have
$$|(Z_{\b_z}/Z_{\b_z}^\ell)[T]|=|(\c_z/\b_z)[\tau-g]|/|(\c_k/\b_k)[\tau-g]|\;,$$
where the two terms on the right-hand side are given by Theorem \ref{thmcb}.
\end{lemma}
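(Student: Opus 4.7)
The plan is to combine Lemma \ref{lemzz}, which identifies $|(Z_{\b_z}/Z_{\b_z}^\ell)[T]|$ with $|M[T]|$ for $M := \c_z/\b_z$, with a decomposition of $M$ induced by the $\tau_2$-action.

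Since $\b_z \mid (1-\ze)^\ell$, the ideals $\b_z$ and $\c_z$ are supported only on primes above $\ell$, so $M$ is a finite $\ell$-group and in particular $2$ acts invertibly on $M$. The idempotents $e_{2,0}=(1+\tau_2)/2$ and $e_{2,1}=(1-\tau_2)/2$ of Lemma \ref{lemidem} thus operate on $M$ and yield a canonical decomposition $M = M^{\tau_2} \oplus M[\tau_2+1]$. Because $\tau$ and $\tau_2$ commute in the general case, this splitting is $\langle\tau\rangle$-equivariant; intersecting with the kernel of $\tau - g$ on both sides yields
\[
M[\tau-g] = (M^{\tau_2})[\tau-g] \oplus M[T],
\]
and hence $|M[T]| = |M[\tau-g]| / |(M^{\tau_2})[\tau-g]|$.

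It remains to identify $M^{\tau_2}$ with the image of $\c_k/\b_k$ in $M$. The natural map $\c_k/\b_k \to M$ is injective because $\b_k = \b_z \cap k_z$, and its image is clearly contained in $M^{\tau_2}$. For the reverse containment, take $\ov{y} \in M^{\tau_2}$ with lift $\tilde y \in \c_z$. Then $\tilde y + \tau_2(\tilde y) \in \c_z$ (by $\tau_2$-stability of $\c_z$) is $\tau_2$-fixed, and therefore lies in $\c_z \cap k_z = \c_k$. Its class in $M$ is $(1+\tau_2)\ov{y} = 2\ov{y}$. Since multiplication by $2$ is an automorphism of the $\ell$-group $M$, and hence of the subgroup $I := \operatorname{image}(\c_k/\b_k \to M)$, the containment $2\ov{y} \in I$ forces $\ov{y}\in I$. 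Combining with the previous paragraph yields the claimed formula.

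The main subtlety is ensuring that the operator $(1+\tau_2)/2$ genuinely acts on the integral quotient $M$, rather than only on some ambient rational space. This is guaranteed throughout by the observation that $\b_z$ is supported on primes above the odd prime $\ell$, making $M$ a finite $\ell$-group on which $2$ is invertible; the same input powers the averaging argument identifying $M^{\tau_2}$ with $\c_k/\b_k$.
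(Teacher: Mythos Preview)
Your proof is correct and follows essentially the same approach as the paper: both arguments use Lemma~\ref{lemzz}, split $M=\c_z/\b_z$ along the $\tau_2$-action (you via the idempotents $e_{2,b}$, the paper via the short exact sequence with last map $x\mapsto x-\tau_2(x)$), and then identify $M^{\tau_2}$ with $\c_k/\b_k$ by the same averaging trick---the paper replaces a lift $x$ by $x+(\tau_2(x)-x)/2=(x+\tau_2(x))/2$, which is exactly your $(\tilde y+\tau_2(\tilde y))/2$, using that $2$ is invertible modulo $\b_z$.
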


\begin{proof} We have an exact sequence of
$\F_\ell[G]$-modules
$$1\LR\dfrac{\c_z}{\b_z}[\tau_2-1][\tau-g]\LR\dfrac{\c_z}{\b_z}[\tau-g]\LR\dfrac{\c_z}{\b_z}[T]\LR1\;,$$
the last map sending $x$ to $x - \tau_2(x)$. It therefore suffices to argue that
$(\c_z/\b_z)[\tau_2-1]=(\c_z\cap k_z)/(\b_z\cap k_z)$: if $x\in\c_z$
satisfies $\tau_2(x)=x+y$ for some $y\in\b_z$, then applying $\tau_2$ we see that
$\tau_2(y)=-y$, hence $\tau_2(x+y/2)=x+y/2$. Moreover
$x+y/2\equiv x\pmod{\b_z}$, because $2$ is invertible modulo $\ell$ hence
modulo $\b$.
\end{proof}

\begin{definition} We set $G_{\b}=(\Cl_{\b_z}(K_z)/\Cl_{\b_z}(K_z)^\ell)[T]$.
\end{definition}

We conclude with one additional lemma which will be needed in the next section.

\begin{lemma}\label{lemt1t2} In the general case set 
$u=\i(\tau_2+1)\i(\tau-g)$ and in the special case set $u=\i(\tau+g)$.
\begin{enumerate}
\item The map $I\mapsto u(I)$ induces a surjective
map from $\Cl_{\b_z}(K_z)/\Cl_{\b_z}(K_z)^\ell$ to $G_{\b}$, of which a section
is the natural inclusion from $G_{\b}$ to $\Cl_{\b_z}(K_z)/\Cl_{\b_z}(K_z)^\ell$.
\item Any character $\chi\in\widehat{G_{\b}}$ can be naturally
extended to a character of $\Cl_{\b_z}(K_z)/\Cl_{\b_z}(K_z)^\ell$ by setting
$\chi(\ov{I})=\chi(\ov{u(I)})$, which we again denote by $\chi$ by abuse of 
notation.
\item Let as usual $\a=\prod_{0\le i\le\ell-2}\a_i^{g^i}$ with the $\a_i$
satifying condition (a) of Proposition \ref{propbij2}.
\begin{itemize}\item In the general case and in the special case when
$\ell\equiv1\pmod4$, we have $\chi(\ov{\a})=\chi(\ov{\a_0})^{-1}$;
\item In the special case when $\ell \equiv 3\pmod 4$, we have
$\chi(\ov{\a})=\chi(\ov{\a_0\a_1^g})^{(\ell-1)/2}$,
\end{itemize}
where $\chi$ on the right-hand side is defined in (2).
\end{enumerate}
\end{lemma}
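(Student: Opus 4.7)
The plan is to derive the three parts in sequence, leaning on the idempotent structure of $\F_\ell[G]$ from Lemmas \ref{lemidem} and \ref{lem24}.

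For (1), I observe that $u$ is a product of commuting idempotents $\i(t)$ for $t \in T$: each $\i(t)$ equals one of the $e_a$ or $e_{2,b}$ of Lemma \ref{lemidem}, hence is idempotent, so $u$ is itself an idempotent in $\F_\ell[G]$. By Lemma \ref{lem24}(2), $\i(t) M \subseteq M[t]$ for each $t \in T$, so $u$ carries $M := \Cl_{\b_z}(K_z)/\Cl_{\b_z}(K_z)^\ell$ into $\bigcap_{t \in T} M[t] = G_{\b}$; by Lemma \ref{lem24}(3), $u$ fixes $G_{\b}$ pointwise. This simultaneously yields the surjection and shows that the natural inclusion is a section. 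Part (2) is then immediate: since $\ov{I}\mapsto u(\ov{I})$ is an $\F_\ell[G]$-module homomorphism, composing with $\chi$ extends $\chi$ to a character on all of $M$.

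The heart of (3) is the observation that, because $G$ is abelian, $u$ commutes with every $\sigma \in G$, so the extended character satisfies
\[
\chi(\sigma(\ov{I})) = \chi(u(\sigma(\ov{I}))) = \chi(\sigma(u(\ov{I}))),
\]
with $u(\ov{I}) \in G_{\b}$. The defining relations of $T$ force $\tau(\ov{J}) = \eps g\ov{J}$ on $G_{\b}$, where $\eps = 1$ in the general case and $\eps = -1$ in the special case, and also $\tau_2(\ov{J}) = -\ov{J}$ in the general case. Pushing through $\chi$ gives $\chi(\tau^j(\ov{I})) = \chi(\ov{I})^{(\eps g)^j}$ for every $\ov{I} \in M$.

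Writing $\ov{\a} = \sum_i g^i \ov{\a_i}$ additively and substituting the formulas from Lemma \ref{lemconda1}, the general case yields $\ov{\a_i} = \tau^{-i}(\ov{\a_0})$ and
\[
\chi(\ov{\a}) = \prod_{i=0}^{\ell - 2} \chi(\ov{\a_0})^{g^{-i} g^i} = \chi(\ov{\a_0})^{\ell - 1} = \chi(\ov{\a_0})^{-1}.
\]
In the special case, splitting by parity via $\a_{2m} = \tau^{-2m}(\a_0)$ and $\a_{2m+1} = \tau^{-2m}(\a_1)$ and noting $(-g)^{-2m} = g^{-2m}$, the same calculation gives
\[
\chi(\ov{\a}) = \chi(\ov{\a_0})^{(\ell - 1)/2} \chi(\ov{\a_1})^{g(\ell - 1)/2} = \chi(\ov{\a_0\a_1^g})^{(\ell - 1)/2},
\]
which is already the claim for $\ell \equiv 3 \pmod 4$. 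When $\ell \equiv 1 \pmod 4$, the additional relation $\a_1 = \tau^{(\ell - 3)/2}(\a_0)$ from Lemma \ref{lemconda1}(2), together with $g^{(\ell - 1)/2} \equiv -1 \pmod \ell$ and the parity of $(\ell - 3)/2$, gives $\chi(\ov{\a_1}) = \chi(\ov{\a_0})^{g^{-1}}$, which collapses the expression back to $\chi(\ov{\a_0})^{-1}$.

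The main obstacle I anticipate is the bookkeeping of exponents in the special case: one must track indices of the $\a_i$ modulo $\ell - 1$, the sign introduced by $\tau\mapsto -g$, and the parities of $(\ell - 1)/2$ and $(\ell - 3)/2$ simultaneously. Parts (1), (2), and the general case of (3) are essentially formal once the idempotent picture is in hand.
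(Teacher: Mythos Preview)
Your proof is correct and follows essentially the same approach as the paper: parts (1) and (2) are derived from the idempotent properties of Lemma \ref{lem24}, and part (3) is computed by combining the action $\chi(\tau(\ov{I}))=\chi(\ov{I})^{\eps g}$ with the relations $\a_i=\tau^{-i}(\a_0)$ (general case) and $\a_{2m}=\tau^{-2m}(\a_0)$, $\a_{2m+1}=\tau^{-2m}(\a_1)$, $\a_1=\tau^{(\ell-3)/2}(\a_0)$ (special case) from Lemma \ref{lemconda1}. The paper presents the special case first and merely remarks that the general case is similar, while you spell out the general case explicitly and give a slightly cleaner simplification $\chi(\ov{\a_1})=\chi(\ov{\a_0})^{g^{-1}}$ in the $\ell\equiv1\pmod4$ step, but the arguments are substantively identical.
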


\begin{proof} (1) and (2) are immediate from Lemma \ref{lem24}. 
For (3), assume that we are in the special case. Using Lemma \ref{lemconda1} we have $\a_{2i}=\tau^{-2i}(\a_0)$,
$\a_{2i+1}=\tau^{-2i}(\a_1)$, and 
$\chi(\ov{\tau^2(I)})=\chi(\ov{I})^{g^2}$, so that
$$\chi(\ov{\a})=\prod_{0\le i<(\ell-1)/2}\chi(\ov{\tau^{-2i}(\a_0\a_1^g)})^{g^{2i}}=\prod_{0\le i<(\ell-1)/2}\chi(\ov{\a_0\a_1^g})=\chi(\ov{\a_0\a_1^g})^{(\ell-1)/2}\;.$$
If in addition $\ell\equiv1\pmod4$ we have $\a_1=\tau^{(\ell-3)/2}(\a_0)$
and $\chi(\ov{\tau(I)})=\chi(\ov{I^{-g}})$, giving
$\chi(\ov{\a_1})=\chi(\ov{\a_0})^{(-g)^{(\ell-3)/2}}=\chi(\ov{\a_0})^{-g^{(\ell-3)/2}}$
and $\chi(\ov{\a_1^g})=\chi(\ov{\a_0})$, so 
$\chi(\ov{\a_0\a_1^g})^{(\ell-1)/2}=\chi(\ov{\a_0})^{\ell-1}=\chi(\ov{\a_0})^{-1}$.

The general case of (3) is proved similarly, with $\a_i = \tau^{-i}(\a_0)$.

\end{proof}

\section{Semi-Final Form of the Dirichlet Series}\label{sec:semifinal}

We can now put everything together, and obtain a complete analogue of the 
main theorem of \cite{Coh-Mor}:

\begin{theorem}\label{thm:main} 
%Recall that for any (true or formal) ideal $\b$ of $K$ as above we set
%$G_{\b}=(\Cl_{\b_z}(K_z)/\Cl_{\b_z}(K_z)^\ell)[T]$. 
We have
\begin{align*}\Phi_{\ell}(K,s)=\dfrac{\ell^{RU(K)}}{(\ell-1)\ell^{\frac{\ell}{\ell-1}[k:\Q]s}}
&\prod_{\p\mid\ell}\N(\p)^{-\frac{(\ell-1-r(e(\p))}{\ell-1}s}\;\cdot\\
&\phantom{=}\cdot\sum_{\substack{\b\in\mathcal{B}\\\gothr^e(\b)\mid\gd_\ell}}
\left(\dfrac{\NC(\b)}{\N(\gothr^e(\b))}\right)^s\dfrac{P(\b,s)}{|(Z_{\b_z}/Z_{\b_z}^\ell)[T]|}\sum_{\chi\in\widehat{G_{\b}}}F(\b,\chi,s) \;,\end{align*}
where
$$F(\b,\chi,s)=\prod_{\substack{p\mid\gothr^e(\b)\\p\in\mathcal{D}_\ell'(\chi)}}(\ell-1)\prod_{\substack{p\mid\gothr^e(\b)\\p\in\mathcal{D}_\ell\setminus{\mathcal{D}_\ell}'(\chi)}}(-1)\prod_{p\in\mathcal{D}'(\chi)} \left(1+\dfrac{\ell-1}{\N(p)^{s}}\right)\prod_{p\in\mathcal{D}\setminus\mathcal{D}'(\chi)} \left(1-\dfrac{1}{\N(p)^{s}}\right)\;,$$
and $\mathcal{D}'(\chi)$ (resp. $\mathcal{D}_\ell'(\chi)$) is the set of
$p\in\mathcal{D}$ (resp. $\mathcal{D}_\ell$) such that 
$\chi(\p_z)=1$, where $\p_z$ is any prime ideal of $K_z$ above $p$.
\end{theorem}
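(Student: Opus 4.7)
The plan is to assemble the formula by chaining together the intermediate identities established in Sections~\ref{sec:dir_series} and~\ref{sec:comp_gb}. We begin from identity~\eqref{eq:jsum}, and use Lemma~\ref{lem:sizefal} to replace $f_\alpha(\b)$ by $|S_{\b_z}(K_z)[T]|$ at the cost of restricting the inner sum to tuples with $\ov\a\in\Cl_{\b_z}(K_z)^\ell$. Since the Selmer-group cardinality does not depend on the tuple it pulls outside, and Lemmas~\ref{lemfal3} and~\ref{lemu5} evaluate it as $\ell^{RU(K)}|G_\b|/|(Z_{\b_z}/Z_{\b_z}^\ell)[T]|$, accounting simultaneously for the prefactor $\ell^{RU(K)}$ of the theorem, the denominator $|(Z_{\b_z}/Z_{\b_z}^\ell)[T]|$, and an extra factor of $|G_\b|$ to be cancelled in the next step.

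The key reduction is to detect the condition $\ov\a\in\Cl_{\b_z}(K_z)^\ell$ by character orthogonality. Remark~\ref{remclk} guarantees that the class of $\a$ in $\Cl_{\b_z}(K_z)/\Cl_{\b_z}(K_z)^\ell$ automatically lies in $G_\b$, so the condition is that $\ov\a$ is trivial in $G_\b$, which equals $|G_\b|^{-1}\sum_{\chi\in\widehat{G_\b}}\chi(\ov\a)$. Substituting and swapping the order of summation cancels $|G_\b|$, and leaves, for each $\chi$, an inner sum of the shape
$$\sum_{\substack{(\a_0,\dots,\a_{\ell-2})\in J\\(\a_\alpha,\ell\Z_K)=\gothr^e(\b)}}\frac{\chi(\ov\a)}{\N(\a_\alpha)^s}.$$
By Lemma~\ref{lemt1t2}(3) we rewrite $\chi(\ov\a)$ as $\chi(\ov{\a_0})^{-1}$ (general case, and special case with $\ell\equiv1\pmod4$) or as $\chi(\ov{\a_0\a_1^g})^{(\ell-1)/2}$ (special case with $\ell\equiv3\pmod4$). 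Combined with Lemma~\ref{lemconda1} (which determines the full tuple from $\a_0$, or from $(\a_0,\a_1)$ in the last case) and with Corollary~\ref{corsplit1} and Proposition~\ref{propkp} (which restrict the admissible primes to $\calD\cup\calD_\ell$), the inner sum factors as an Euler product over primes $p$ of $k$.

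It remains to verify the shape of each local factor. For $p\in\calD$, either no primes above $p$ enter the tuple, contributing $1$, or a single orbit under the relevant Galois action does; as the orbit representative is cycled by $\tau$ the character value runs through $\chi(\p_z)^{g^i}$ for $i=0,\dots,\ell-2$, up to a uniform nonzero exponent depending on the case. Since $g$ is a primitive root modulo $\ell$ this exhausts all of $(\Z/\ell\Z)^*$, so the sum collapses to $\ell-1$ when $\chi(\p_z)=1$ and to $-1$ otherwise, producing the factors $1+(\ell-1)/\N(p)^s$ and $1-1/\N(p)^s$ respectively. For $p\mid\gothr^e(\b)$ the inclusion of $p$ in $\a_\alpha$ is forced by Lemma~\ref{lem:reb}, so only the nonempty option survives and the same character sum yields the constants $\ell-1$ or $-1$; no $\N(p)^{-s}$ factor appears since its contribution is already contained in $\NC(\b)^s/\N(\gothr^e(\b))^s$. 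This exactly reproduces the Euler factorisation $F(\b,\chi,s)$ of the theorem, and dividing by $\ell-1$ gives $\Phi_\ell(K,s)$.

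The main obstacle will be the prime-by-prime bookkeeping, in particular the interaction between the three cases of Proposition~\ref{propkp}, the general-versus-special dichotomy, and the parity split in Lemma~\ref{lemconda1}. In each of these situations one must confirm that the character sum really collapses uniformly to $\ell-1$ or $-1$, which requires tracking exactly how $\tau$ (and, in the general case, $\tau_2$) permutes the primes above $p$ in $K_z$ and how the $T$-relations on $\chi$ transform the corresponding exponents of $\chi(\p_z)$; once this verification is made uniformly, the final formula follows mechanically from reassembly.
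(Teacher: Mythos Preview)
Your proposal is correct and follows essentially the same approach as the paper: extract the Selmer factor via Lemmas~\ref{lem:sizefal}, \ref{lemfal3}, and~\ref{lemu5}, detect the ray-class condition by orthogonality over $\widehat{G_\b}$, then use Lemmas~\ref{lemconda1} and~\ref{lemt1t2} together with Corollary~\ref{corsplit1} to reduce to a case-by-case Euler-product computation over primes in $\calD\cup\calD_\ell$. One minor bookkeeping caution: after inserting the character sum, the inner sum must range over $J'$ (tuples satisfying condition~(a) of Proposition~\ref{propbij2} \emph{without} the constraint that $\ov\a\in\Cl(K_z)^\ell$) rather than over $J$ as you wrote, since otherwise the sum does not factor multiplicatively---this passage is free because the ray-class condition $\ov\a\in\Cl_{\b_z}(K_z)^\ell$ you are detecting already implies the weaker one defining $J$.
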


\begin{proof} 
We begin with the formula for $\Phi_{\ell}(K, s)$ given by 
\eqref{eq:phil} and \eqref{eq:jsum}. By Remark \ref{remclk} 
we have $\ov{\a}\in(\Cl_{\b_z}(K_z)/\Cl_{\b_z}(K_z)^\ell)[T]$ with
$\mfa = \prod_{0 \leq i \leq \ell - 2} \mfa_i^{g^i}$.
Thus $\ov{\a}\in \Cl_{\b_z}(K_z)^\ell$ if and only if 
$\chi(\ov{\a})=1$ for all characters $\chi\in\widehat{G_{\b}}$. The number of
such characters being equal to $|G_{\b}|$, by orthogonality of characters and
Lemmas \ref{lem:sizefal}, \ref{lemfal3}, and \ref{lemu5} we obtain
$$\Phi_\ell(K,s)=\dfrac{\ell^{RU(K)}}
{(\ell-1)\ell^{\frac{\ell}{\ell-1}[k:\Q]s}}\prod_{\p\mid \ell}
\N(\p)^{-\frac{\ell-1-r(e(\p))}{\ell-1}s}\sum_{\substack{\b\in\mathcal{B}\\\gothr^e(\b)\mid\gd_\ell}}\dfrac{\NC(\b)^sP(\b,s)}
{|(Z_{\b_z}/Z_{\b_z}^\ell)[T]|}\sum_{\chi\in\widehat{G_{\b}}}H(\b,\chi,s)\;,$$
with
$$H(\b,\chi,s)=\sum_{\substack{(\a_0,\cdots,\a_{\ell-2})\in J'\\(\a_{\al},\ell\Z_K)=\gothr^e(\b)}}\dfrac{\chi(\ov{\a})}{\N(\a_{\al})^s}\;,$$
where $\mfa_{\alpha}$ was defined in Proposition \ref{propsplit2}, and
$J'$ is the set of $(\ell-1)$uples of coprime squarefree ideals of 
$K_z$, satisfying condition (a) of Proposition \ref{propbij2}, but now
without the condition that the ideal class of $\mfa$ belongs to $\Cl(K_z)^{\ell}$, so satisfying the condition of
Lemma \ref{lemconda1}.

{\bf Assume first that we are in the general case.} By Lemma \ref{lemconda1} we can
replace the sum over $J'$ by a sum over ideals $\a_0$ of $K_z$. The 
conditions and quantities linked to $\a_0$ are then as follows:

\begin{enumerate}[(a)]
\item The ideal $\a_0$ is a squarefree ideal of $K_z$ such that
$\tau^{(\ell-1)/2}(\a_0)=\tau_2(\a_0)$.
\item The ideals $\a_0$ and $\tau^i(\a_0)$ are coprime for $(\ell-1)\nmid i$.
\item If $\p_z$ is a prime ideal of $K_z$ dividing $\a_0$, $\p$ the prime 
ideal of $K$ below $\p_z$, and $p$ the prime ideal of $k$ below $\p_z$ then by
Corollary \ref{corsplit1} we have $p\in \calD\cup\calD_\ell$. Conversely, if 
this is satisfied then the ideals $\a_i=\tau^{-i}(\a_0)$ must be
pairwise coprime since otherwise $\a_{\al}$ would be divisible by some 
$\p_z^2$Â¸ which is impossible since $\p$ is unramified in $K_z/K$.
\item We have $\N_{K_z/K}(\a_0)=\a_{\al}$.
\item By Lemma \ref{lemt1t2} we have $\chi(\ov{\a})=\chi^{-1}(\ov{\a_0})$.
\end{enumerate}

Thus if we denote temporarily by $J''$ the set of ideals $\a_0$ of $K_z$
satisfying the first three conditions above, we have
$$H(\b,\chi,s)=\sum_{\substack{\a_0\in J''\\(\N_{K_z/K}(\a_0),\ell\Z_K)=\gothr^e(\b)}}\dfrac{\chi^{-1}(\ov{\a_0})}{\N(\N_{K_z/K}(\a_0))^s}\;.$$

So that we can use multiplicativity, write $\a_0=\c\d$, where $\c$ is the $\ell$-part of
$\a_0$ and $\d$ is the prime to $\ell$ part (recall that $\a_0$ is squarefree).
The condition $(\N_{K_z/K}(\a_0),\ell\Z_K)=\gothr^e(\b)$ is thus equivalent
to $\N_{K_z/K}(\c)=\gothr^e(\b)$. Thus $H(\b,\chi,s)=S_cS_d$ with
$$S_c=\sum_{\substack{\c\in J''\\\N_{K_z/K}(\c)=\gothr^e(\b)}}\dfrac{\chi^{-1}(\ov{\c})}{\N(\N_{K_z/K}(\c))^s}\text{\quad and\quad}S_d=\sum_{\substack{\d\in J''\\(\N_{K_z/K}(\d),\ell\Z_K)=1}}\dfrac{\chi^{-1}(\ov{\d})}{\N(\N_{K_z/K}(\d))^s}\;.$$

Consider first the sum $S_d$. By multiplicativity we have
$S_d=\prod_{p\in\calD}S_{d,p}$ with
$$S_{d,p}=\sum_{\substack{\d\mid p\Z_{K_z}\\\tau^{(\ell-1)/2}(\d)=\tau_2(\d)}}\dfrac{\chi^{-1}(\ov{\d})}{\N(\N_{K_z/K}(\d))^s}\;.$$

As $p$ is not above $\ell$, it is unramified in
$K/k$ by Proposition \ref{propkp} and we consider the remaining two cases:

\begin{enumerate}\item Assume that $p\Z_K=\p$, i.e, that $p$ is inert in $K/k$.
Since $\p$ is totally split in $K_z/K$ we have 
$\p\Z_{K_z}=\prod_{0\le i\le\ell-2}\tau^i(\p_z)$ for some prime ideal $\p_z$
of $K_z$. Furthermore, since $\p_z/\p_k$ (with our usual notation) is split
we have $\tau_2(\p_z)\ne\p_z$, and since $\p$ is stable by $\tau_2$, 
$\tau_2(\p_z)$ is again above $\p$, so 
we deduce that $\tau_2(\p_z)=\tau^{(\ell - 1)/2}(\p_z)$.

Since $\d$ is squarefree and coprime to its $K_z/K$-conjugates,
we see that $\d=\Z_{K_z}$ or $\d=\tau^i(\p_z)$ for some $i$, with
$\N(\N_{K_z/K}(\d))$ equal to $1$ or $\N(p)$ respectively. In the latter case we have

\begin{equation}\label{eqn:sdp}
S_{d,p}=1+\sum_{0\le i\le\ell-2}\dfrac{\chi(\ov{\p_z})^{-g^i}}{\N(p)^s}=
1+\sum_{1\le j\le\ell-1}\dfrac{\chi(\ov{\p_z})^j}{\N(p)^s}\;,
\end{equation}
so that $S_{d,p}=1+(\ell-1)/\N(p)^s$ if $\chi(\ov{\p_z})=1$, and
$S_{d,p}=1-1/\N(p)^s$ otherwise.

\item If instead $p\Z_K=\p\tau_2(\p)$ is split in $K/k$, then similarly either
$\d = \Z_{K_z}$ or 
$\d=\tau^i(\p_z \tau^{\frac{\ell - 1}{2}} \tau_2(\p_z))$ for some $i$ and $\p_z$.
We have that
$\chi(\tau^{(\ell-1)/2}(\tau_2(\p_z)))=\chi^{-1}(\tau_2(\p_z))=\chi(\p_z),$
and hence obtain
the same result as above.

\end{enumerate}

Consider now the sum $S_c$. By multiplicativity, since $\b$ is stable by
$\tau_2$, and applying Lemma \ref{lem:reb} we have
$$S_c=\dfrac{1}{\N(\gothr^e(\b))^s}\sum_{\substack{\c\in J''\\\N_{K_z/K}(\c)=\gothr^e(\b)}}\chi^{-1}(\ov{\c})=\dfrac{1}{\N(\gothr^e(\b))^s}\prod_{\substack{p\in\calD_\ell\\(p,\b)=1}}S_{c,p}\;,$$
with
$$S_{c,p}=\sum_{\substack{\c\mid p\Z_{K_z}\\\tau^{(\ell-1)/2}(\c)=\tau_2(\c)\\
\N_{K_z/K}(\c)=\prod_{\p\mid p}\p}}\chi^{-1}(\ov{\c})\;.$$
Our analysis is essentially the same as before, except $p$ can now be ramified in $K/k$
and the possibility 
$\c = \Z_{K_z}$ is now excluded. In all cases we
obtain that 
$S_{c,p}=\ell-1$ if $\chi(\ov{\p_z})=1$ and
$-1$ otherwise.

Putting everything together proves the theorem in the general case.

\medskip

{\bf In the special case with $\ell\equiv1\pmod4$} the proof is similar; condition (a) is absent
and (d) becomes $\N_{K_z/k}(\mfa_0) = \N(\a_{\al})$.  Imitating the inert case of the previous argument, we obtain
the same results.

\medskip

{\bf In the special case with $\ell\equiv3\pmod4$,}
we replace the sum over $J'$ by a sum over pairs $(\a_0,\a_1)$ of ideals
of $K_z$ satisfying suitable conditions: 
\begin{itemize}
\item
In place of (a), $\a_0$ and $\a_1$ are fixed by $\tau^{(\ell - 1)/2}$.
\item
In place of (b), the ideals $\a_0$, $\a_1$,
$\tau^{2i}(\a_0)$, and $\tau^{2i}(\a_1)$ must all be coprime.
\item
In place of (d), we have $\N_{K_z/K}(\mfa_0 \mfa_1) = \N(\a_{\al})$.
\item
In place of (e), we have
$\chi(\ov{\a}) = \chi(\ov{\a_0 \a_1^g})^{(\ell - 1)/2}.$
\end{itemize}
We must again consider all splitting types in $K/k$, and the arguments are similar. If $p$ is inert, we compute that
\[
S_{d, p} = 1 + \sum_{0 \leq i \leq \frac{ \ell - 3}{2}} \frac{\chi(\p_z)^{g^{2i} \cdot (\ell - 1)/2}}{\N(p)^s} + 
\sum_{0 \leq i \leq \frac{ \ell - 3}{2}} \frac{\chi(\p_z)^{g^{2i + 1} \cdot (\ell - 1)/2}}{\N(p)^s},
\]
equal to the same expression as before. If $p$ is split, recall that by Proposition \ref{propsplit2} $\mfa_{\alpha}$ 
must be stable by $\tau$;
the relevant computation is
\[
\chi(\ov{\p_z \tau^{(\ell - 1)/2}(\p_z)})^{(\ell - 1)/2} = \chi(\ov{\p_z^{1 - g^{(\ell - 1)/2}}})^{(\ell - 1)/2} = \chi(\ov{\p_z})^{-1},
\]
and again we obtain the same results. For $p \in \calD_{\ell}$ the argument is similar, once again
considering all three cases and obtaining the same result.
\end{proof}

\smallskip

As mentioned in Remarks \ref{rem:ell3}, if $\ell>2[k:\Q]+1$, and in particular
if $k=\Q$ and $\ell\ge5$, we always have $\gothr^e(\b)=(1)$. The theorem
simplifies and gives the following:

\begin{corollary}\label{cormain} Keep the same notation, and assume that 
$\ell>2[k:\Q]+1$.
We have
$$\Phi_{\ell}(K,s)=\dfrac{\ell^{RU(K)}}{(\ell-1)\ell^{\frac{\ell}{\ell-1}[k:\Q]s}}
\prod_{\p\mid\ell}
\N(\p)^{-\frac{\ell-1-r(e(\p))}{\ell-1} s}\sum_{\b\in\mathcal{B}}
\dfrac{\NC(\b)^sP(\b,s)}{|(Z_{\b_z}/Z_{\b_z}^\ell)[T]|}\sum_{\chi\in\widehat{G_{\b}}}F(\b,\chi,s) \;,$$
where
$$F(\b,\chi,s)=\prod_{p\in\mathcal{D}'(\chi)} \left(1+\dfrac{\ell-1}{\N(p)^{s}}\right)\prod_{p\in\mathcal{D}\setminus\mathcal{D}'(\chi)} \left(1-\dfrac{1}{\N(p)^{s}}\right)\;.$$
\end{corollary}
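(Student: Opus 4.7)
The plan is to deduce the corollary directly from Theorem \ref{thm:main} by showing that the hypothesis $\ell > 2[k:\Q]+1$ forces $\gothr^e(\b) = (1)$ for every $\b \in \calB$. This is exactly the observation indicated in Remark \ref{remre}, and once it is in hand the corollary follows by bookkeeping rather than by any new argument.

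To verify triviality of $\gothr^e(\b)$, recall from Definition \ref{defd3} that $\gothr^e(\b)$ is a product over primes $\p$ of $K$ above $\ell$ satisfying $(\ell-1) \mid e(\p)$. I would then bound $e(\p)$ as in Remark \ref{remre}: since $e(\p) = e(\p/p) e(p)$ with $e(\p/p) \le 2$ (as $[K:k]=2$) and $e(p) \le [k:\Q]$, one has $e(\p) \le 2[k:\Q] < \ell - 1$, so $(\ell-1) \nmid e(\p)$ and the product defining $\gothr^e(\b)$ is empty.

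With $\gothr^e(\b) = \Z_K$ in force, the simplification is mechanical. The summation condition $\gothr^e(\b) \mid \gd_\ell$ from Theorem \ref{thm:main} becomes vacuous, so the sum ranges over all $\b \in \calB$. The factor $\N(\gothr^e(\b))^{-s}$ equals $1$, collapsing $\bigl(\NC(\b)/\N(\gothr^e(\b))\bigr)^s$ to $\NC(\b)^s$. Finally, in the definition of $F(\b,\chi,s)$, both products indexed by primes $p \mid \gothr^e(\b)$ (the $(\ell-1)$-factors for $p \in \calD_\ell'(\chi)$ and the $(-1)$-factors for $p \in \calD_\ell \setminus \calD_\ell'(\chi)$) are empty products equal to $1$, leaving only the two Euler products indexed by $\calD'(\chi)$ and $\calD \setminus \calD'(\chi)$.

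There is essentially no obstacle here beyond the bound on $e(\p)$, and the proof could reasonably be stated in a single sentence referencing Theorem \ref{thm:main} and Remark \ref{remre}. If one wanted to be more thorough, the only point worth emphasizing is that the hypothesis $\ell > 2[k:\Q]+1$ is used solely through this ramification bound, and hence the corollary applies in particular to $k = \Q$ with $\ell \geq 5$, where $[k:\Q] = 1$ and $e(p) = 1$ so $e(\p) \le 2 < 4 \le \ell - 1$.
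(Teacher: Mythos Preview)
Your proposal is correct and matches the paper's own treatment: the corollary is stated immediately after Theorem \ref{thm:main} with the one-line justification (already noted in Remark \ref{remre}) that $\ell>2[k:\Q]+1$ forces $e(\p)\le 2[k:\Q]<\ell-1$ and hence $\gothr^e(\b)=(1)$, after which all the $\gothr^e(\b)$-dependent pieces of Theorem \ref{thm:main} collapse exactly as you describe.
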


In the general case, we now prove that the group $G_{\b}$ can be described in somewhat simpler terms, in terms
of the mirror field $K'$ of $K$. (See also Theorems \ref{thm:char_nf} and
Theorem \ref{thm:nakagawa} for a further characterization.)

\begin{proposition}\label{prop:iso_red}
There is a natural isomorphism
\[
\frac{ \Cl_{\mfb}(K_z)}{\Cl_{\mfb}(K_z)^{\ell}}[T] \rightarrow \frac{ \Cl_{\mfb'}(K')}{ \Cl_{\mfb'}(K')^{\ell} }[\tau - g],
\]
where $\mfb' = \mfb \cap K'$.

Moreover, using this isomorphism to regard a character $\chi$ of $\frac{ \Cl_{\mfb}(K_z)}{\Cl_{\mfb}(K_z)^{\ell}}[T]$
as a character $\widetilde{\chi}$ of $\frac{ \Cl_{\mfb'}(K')}{ \Cl_{\mfb'}(K')^{\ell} }[\tau - g]$,
the condition $\chi(\ov{\p_z}) = 1$ defining $\calD(\chi) \cap \calD'_{\ell}(\chi)$
is equivalent to the condition $\widetilde{\chi}(\ov{\p_{K'}}) = 1$ for the unique prime $\p_{K'}$ of $K'$ below $\p_z$.

\end{proposition}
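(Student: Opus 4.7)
The plan is to construct the desired isomorphism as (the inverse of) the conorm map
$\phi:\Cl_{\mfb'}(K')\to\Cl_{\mfb_z}(K_z)$ sending $I\mapsto I\Z_{K_z}$, restricted appropriately. First I would verify
that $\phi$ is well-defined on ray class groups and is $\tau$-equivariant: writing $\sigma=\tau^{(\ell-1)/2}\tau_2$
for the generator of $\Gal(K_z/K')$, the $\sigma$-stability of $\mfb_z$ yields $\mfb_z=\mfb'\Z_{K_z}$, and
$\tau$ preserves $K'$ since $\Gal(K_z/k)$ is abelian.

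Next I would exploit the standard identities $N_{K_z/K'}\circ\phi=[K_z:K']=2$ and $\phi\circ N_{K_z/K'}=1+\sigma$
on ideal classes. Since $\ell$ is odd, $2$ is invertible modulo $\ell$, so modulo $\ell$th powers $\phi$
is injective with image exactly the $\sigma$-fixed subgroup of $\Cl_{\mfb_z}(K_z)/\Cl_{\mfb_z}(K_z)^\ell$.
On the $(\tau-g)$-eigenspace one has $\sigma=g^{(\ell-1)/2}\tau_2=-\tau_2$, so being $\sigma$-fixed there is
equivalent to being annihilated by $\tau_2+1$. Hence $\phi$ restricts to
\[
(\Cl_{\mfb'}(K')/\Cl_{\mfb'}(K')^\ell)[\tau-g]\xrightarrow{\sim}G_{\mfb},
\]
which (after inversion) is the claimed isomorphism.

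For the character statement I would set $\widetilde\chi:=\chi\circ\phi$. Writing $e_g=\iota(\tau-g)$, the
$\tau$-equivariance of $\phi$ gives $\phi(e_g(\p_{K'}))=e_g(\p_{K'}\Z_{K_z})$, reducing the claim to
computing $\chi(e_g(\p_{K'}\Z_{K_z}))$ in each splitting case for $\p_{K'}$ in $K_z/K'$.
If $\p_{K'}$ is inert then $\p_{K'}\Z_{K_z}=\p_z$ and $\widetilde\chi(\p_{K'})=\chi(\p_z)$.
If $\p_{K'}$ splits as $\p_z\sigma(\p_z)$, the commutativity $e_g\tau_2=\tau_2 e_g$ yields
$e_g(\p_{K'}\Z_{K_z})=(1-\tau_2)e_g(\p_z)=2u(\p_z)$ (with $u=\iota(\tau_2+1)\iota(\tau-g)$),
so $\widetilde\chi(\p_{K'})=\chi(\p_z)^2$. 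If $\p_{K'}$ is ramified (which occurs for certain $p\in\calD_\ell$
ramified in $K/k$), so that $\p_{K'}\Z_{K_z}=\p_z^2$, then directly $\widetilde\chi(\p_{K'})=\chi(\p_z)^2$.
In all three cases $\chi$ takes values in a cyclic group of odd order, so
$\widetilde\chi(\p_{K'})=1\iff\chi(\p_z)=1$.

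The main obstacle I expect is the compatibility of $\phi$ with the conductor $\mfb_z$ when $\mfb$ has
fractional prime exponents above $\ell$ (as the elements of $\calB$ allow); this requires careful verification
that the ray class group formalism and its extension via $\phi$ behave as expected in this mildly non-standard
setting. A secondary technical point is to confirm that the idempotent extensions of $\chi$ and $\widetilde\chi$
from their defining subquotients to the ambient class groups interact correctly under $\phi$, but this should
follow formally from the $\tau$-equivariance of $\phi$ together with the commutativity of $\tau$ and $\tau_2$.
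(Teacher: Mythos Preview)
Your argument is correct and follows essentially the same approach as the paper's. The paper constructs the map in the opposite direction---starting from $G_{\mfb}$, it observes that $\sigma=\tau^{(\ell-1)/2}\tau_2$ acts trivially there (exactly your eigenvalue computation), represents classes by $\sigma$-invariant ideals $\mfa\cdot\sigma(\mfa)$, and descends these to $K'$---whereas you build the inverse via the conorm $I\mapsto I\Z_{K_z}$ and the norm--conorm identities; these are the two sides of the same descent. For the character statement the paper likewise reduces to $\chi(\overline{\p_{K'}\Z_{K_z}})$ and then asserts the splitting-case analysis you carry out explicitly, so your treatment is simply a more detailed version of theirs.
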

\begin{proof} The first statement is also proved in \cite[Proposition 3.6]{CRST}, so we will be brief. As
$\tau^{(\ell - 1)/2} \tau_2$ acts trivially on $G_{\b}$, it can be checked that 
elements of $G_{\b}$ can be represented by an ideal of the form
$\mfa \tau_2 \tau^{(\ell - 1)/2} \mfa$, which is of the form $\mfa' \Z_{K_z}$ for some ideal
$\mfa'$ of $K'$. We therefore obtain a well-defined injective map
$\frac{ \Cl_{\mfb}(K_z)}{\Cl_{\mfb}(K_z)^{\ell}}[T] \rightarrow \frac{ \Cl_{\mfb'}(K')}{ \Cl_{\mfb'}(K')^{\ell} }[\tau - g],$
which may easily be shown to be surjective as well.

The latter statement follows because the condition $\widetilde{\chi}(\ov{\p_{K'}}) = 1$ is equivalent to
$\chi(\ov{\p_{K'} \Z_{K_z}}) = 1$, which is easily seen to be equivalent to $\chi(\ov{\p_z}) = 1$ for any splitting type
of $\p_z | \p_{K'}$.

\end{proof}
\section{Specialization to $k=\Q$}\label{sec_q}

We now specialize to $k = \Q$,
where we will obtain more explicit results. Henceforth we assume that
$K=\Q(\sqrt{D})$ is a quadratic field with discriminant $D$, with $r_2(D) = 0$ if $D > 0$ and $r_2(D) = 1$ if $D < 0$.

By definition, $\calB = \{1, (\ell), (\ell)^{\ell/(\ell - 1)} \}$ in the general case with $\ell \nmid D$, 
and $\calB = \{1, (\ell)^{1/2}, (\ell), (\ell)^{\ell/(\ell - 1)} \}$ in the special case or in the general
case with $\ell \mid D$. Equivalently we may write
\begin{equation}\label{eqn:poss_b}
\b_z \in \Big\{ \Z_{K_z}, \ (1-\z_\ell)^{(\ell-1)/2}\Z_{K_z}, 
\ (1-\z_\ell)^{\ell-1}\Z_{K_z}=\ell\Z_{K_z}, 
(1-\z_\ell)^\ell\Z_{K_z} \Big\}\;,
\end{equation}
{with the second entry removed in the former case.}
Throughout, $(-, -, -, -)$ will describe quantities depending on $\calB$,
with asterisks denoting `not applicable'.

\begin{proposition}\label{propzb}
We have that $|(Z_{\b_z}/Z_{\b_z}^\ell)[T]|$ is equal to $(1, \ast, \ell, \ell)$ or $(1, 1, \ell, \ell)$
for $\ell \nmid D$ or $\ell | D$ respectively, unless $\ell = 3$ in the special case, in which case
$|(Z_{\b_z}/Z_{\b_z}^3)[T]| = (1, 1, 1, 3)$.
\end{proposition}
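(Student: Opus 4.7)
The plan is to apply Lemma \ref{lemzz} to reduce $|(Z_{\b_z}/Z_{\b_z}^\ell)[T]|$ to $|(\c_z/\b_z)[T]|$, and then to evaluate this second quantity by means of Theorem \ref{thmcb} (in the special case) or by Lemma \ref{lem:27_es} combined with Theorem \ref{thmcb} (in the general case). The first entry $1$ in each tuple is immediate: if $\b_z = \Z_{K_z}$ then $\c_z = \Z_{K_z}$, so the quotient is trivial.

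For the remaining three cases I enumerate $\b_z$ as in \eqref{eqn:poss_b}, giving $v_p(\b) \in \{1/2,\,1,\,\ell/(\ell-1)\}$, with $\c_z$ determined by the definition preceding Lemma \ref{lemzz}. In the special case I apply Theorem \ref{thmcb} with $\p$ and $K$ replaced by $p$ and $k$, taking $j = (\ell+1)/2$ so that $\tau - g^j$ generates $T$; substituting $e(p) = 1$ and $e(\p_z/p) = \ell - 1$ into \eqref{eq:xp} and simplifying the two nested ceilings yields $x_j(p) = 0$ for $\b = (\ell)^{1/2}$, $x_j(p) = 1$ for $\b = (\ell)^{\ell/(\ell-1)}$, and, for $\b = (\ell)$, the inner expression $(\ell-3)/(2(\ell-1))$, which is strictly positive for $\ell \geq 5$ (giving $1$) but vanishes when $\ell = 3$ (giving $0$). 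This reproduces $(1,1,\ell,\ell)$ for $\ell \geq 5$ and the exceptional tuple $(1,1,1,3)$ when $\ell = 3$.

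For the general case I invoke Lemma \ref{lem:27_es} to write
\[
|(\c_z/\b_z)[T]| \;=\; \frac{|(\c_z/\b_z)[\tau-g]|}{|(\c_k/\b_k)[\tau-g]|},
\]
and compute each factor by \eqref{eq:xp} with $j = 1$. When $\ell \nmid D$ we have $e(\p) = 1$, the valuations in $K$ and $k$ agree, and the $f(\p/p)g(\p/p) = 2$ primes of $K$ above $p$ contribute identical factors to the numerator, so the ratio collapses to $\ell^{x_1(p)}$; a direct evaluation of \eqref{eq:xp} gives $x_1(p) = 0, 1, 1$ for the three nontrivial values of $\b$, matching $(1,\ast,\ell,\ell)$. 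When $\ell \mid D$ there is a single ramified prime $\p$ of $K$ with $e(\p) = 2$, and using $(\ell-1,2) = 2$ one finds $e(\p_z/\p) = (\ell-1)/2$ and $v_\p(\b) = 2 v_p(\b)$; substituting into \eqref{eq:xp} and taking the ratio of the $K$-side and the $k$-side gives the claimed $(1,1,\ell,\ell)$.

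The main obstacle is the bookkeeping in the nested ceilings of \eqref{eq:xp}, whose value depends delicately on the arithmetic of $\ell$, $e(\p)$, and the half-integer $v_p(\b)$. The exceptional behavior at $\ell = 3$ in the special case arises precisely because $(\ell+1)/2 = 2$ causes the outer ceiling in \eqref{eq:xp} to collapse exactly at $v_p(\b) = 1$, which is the entry corresponding to $\b = (\ell)$; one must also verify that no further unexpected cancellations occur in the small-$\ell$ borderline cases.
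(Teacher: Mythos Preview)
Your approach is essentially the same as the paper's: reduce via Lemma \ref{lemzz} (or directly Lemma \ref{lem:27_es} in the general case) and then evaluate the $x_j$-exponents from Theorem \ref{thmcb}, with $j=(\ell+1)/2$ in the special case and $j=1$ in the general case. One small phrasing slip: when $\ell\nmid D$ you write ``the $f(\p/p)g(\p/p)=2$ primes of $K$ above $p$ contribute identical factors,'' but if $\ell$ is inert in $K$ there is a single prime of norm $\ell^2$; the conclusion $\prod_{\p\mid p}\N_{K/\Q}(\p)^{x_1(\p)}=\ell^{2x_1}$ is still correct either way, so the ratio $\ell^{x_1(p)}$ stands.
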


\begin{proof} This follows from Theorem \ref{thmcb} and Lemma \ref{lem:27_es}.
In the general case we obtain
\[
|(Z_{\b_z}/Z_{\b_z}^\ell)[T]| = (0, \ast, 2, 2) - (0, \ast, 1, 1) = (0, \ast, 1, 1)\;,
\]
\[
|(Z_{\b_z}/Z_{\b_z}^\ell)[T]| = (0, 1, 2, 2) - (0, 1, 1, 1)  = (0, 0, 1, 1)\;,
\]
depending on whether $\ell \nmid D$ or $\ell | D$ respectively; in the special case we obtain
\[
|(Z_{\b_z}/Z_{\b_z}^\ell)[T]| = (0, 0, 1, 1)\;,
\]
\[
|(Z_{\b_z}/Z_{\b_z}^\ell)[T]| = (0, 0, 0, 1)\;,
\]
depending on whether $\ell \geq 5$ or $\ell = 3$ respectively.
\end{proof}

Recall by Lemma \ref{lemmirror} that the mirror field of $K=\Q(\sqrt{D})$ 
with respect to $\ell$ is the degree $\ell-1$ field
$K'=\Q(\sqrt{D}(\ze-\ze^{-1}))$. The following is immediate from
the results of Section \ref{sec:galois}:

\begin{lemma}\label{lemkp} Let $p$ be a prime different from $\ell$.
\begin{itemize}\item We have $p\in\calD$ if and only if
$p\equiv\leg{D}{p}\pmod{\ell}$.
\item In the general case, this is equivalent to 
$p$ splitting completely in $K'/\Q$.
\item In the special case with $\ell\equiv1\pmod4$, this is equivalent to
$p\equiv1\pmod{\ell}$.
\item In the special case with $\ell\equiv3\pmod4$, this is equivalent to
$p\equiv\pm1\pmod{\ell}$.\end{itemize}
\end{lemma}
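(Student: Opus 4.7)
The plan is to analyze the conditions of Definition \ref{defcald} directly in terms of residue degrees and Frobenius elements, exploiting the especially simple structure of $k_z = \Q(\ze)$ over $\Q$: for a prime $p \ne \ell$, the Frobenius at $p$ in $\Gal(k_z/\Q)$ has order $\ord_\ell(p)$, so $f(\p_k/p) = \ord_\ell(p)$. For the quadratic field $K/\Q$, $f(\p/p)$ equals $1$ or $2$ according to whether $\leg{D}{p}$ is $1$ (or $0$) or $-1$.

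In the general case, $K_z = K \cdot k_z$ is the compositum of linearly disjoint abelian extensions of $\Q$, so $f(\p_z/p) = \lcm(f(\p/p), f(\p_k/p))$. Condition (1) of Definition \ref{defcald} ($\p$ totally split in $K_z/K$) translates to $f(\p_k/p) \mid f(\p/p)$, while condition (2) ($\p_k$ split in $K_z/k_z$) translates to $f(\p/p) \mid f(\p_k/p)$ together with $\p_k$ being unramified in the quadratic extension $K_z = k_z(\sqrt{D})/k_z$; this last ramification condition rules out primes dividing $D$. For $p \nmid D\ell$ we therefore obtain $p \in \calD$ iff $\ord_\ell(p) = f(\p/p)$, which translates to $p \equiv 1 = \leg{D}{p} \pmod{\ell}$ when $\leg{D}{p} = 1$ and to $p \equiv -1 = \leg{D}{p} \pmod{\ell}$ when $\leg{D}{p} = -1$; if $p \mid D$ both sides fail. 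This proves the first bullet. The second bullet is then immediate from Proposition \ref{propkp}: for $p \ne \ell$ one has $p \notin \calD_\ell$, so $p \in \calD$ is equivalent to $p \in \calD \cup \calD_\ell$, which that proposition identifies with $p$ being totally split in $K'/\Q$.

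In the special case $K_z = k_z$, so condition (1) alone gives $f(\p_z/\p) = 1$, i.e., $\ord_\ell(p) = f(\p/p) \in \{1, 2\}$, hence $p \equiv \pm 1 \pmod{\ell}$. Since $K = \Q(\sqrt{\ell^*})$ with $\ell^* = (-1)^{(\ell-1)/2}\ell$, the only rational prime ramifying in $K$ is $\ell$, so for $p \ne \ell$ we have $\leg{D}{p} \in \{\pm 1\}$, and quadratic reciprocity yields $\leg{\ell^*}{p} = \leg{p}{\ell}$. Thus $p \equiv \leg{D}{p} \pmod{\ell}$ reduces to $p \equiv \leg{p}{\ell} \pmod{\ell}$, which is automatic when $p \equiv 1 \pmod{\ell}$ and, when $p \equiv -1 \pmod{\ell}$, holds iff $\leg{-1}{\ell} = -1$, i.e., iff $\ell \equiv 3 \pmod 4$. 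This gives the fourth bullet. The third bullet then follows because when $\ell \equiv 1 \pmod 4$, condition (3) of Definition \ref{defcald} (that $p$ be totally split in $K_z/\Q$) is the strictly stronger requirement $p \equiv 1 \pmod{\ell}$, and it agrees with the first bullet by the same reciprocity computation.

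The main obstacle will be the careful bookkeeping around ramification in the general case to correctly exclude primes dividing $D$ via condition (2), and verifying that the quadratic reciprocity application in the special case handles both residues of $\ell \bmod 4$ cleanly; beyond these points, the result is essentially a direct Frobenius computation.
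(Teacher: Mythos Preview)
Your proposal is correct and follows the approach the paper has in mind: the paper simply declares the lemma ``immediate from the results of Section~\ref{sec:galois}'', and your direct Frobenius/residue-degree analysis of Definition~\ref{defcald}, together with the appeal to Proposition~\ref{propkp} for the mirror-field bullet, is exactly such a verification. One minor remark: in the special case with $\ell\equiv1\pmod4$ you describe condition~(3) as ``strictly stronger'', but in fact it is equivalent to condition~(1) there (both reduce to $p\equiv1\pmod\ell$ once you use $\leg{D}{p}=\leg{p}{\ell}$ and $\leg{-1}{\ell}=1$), so condition~(3) is redundant rather than an extra constraint---your conclusion is unaffected.
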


We come now to the analogue of Theorem 3.2 of \cite{Coh-Tho}.
The case $\ell=3$, which is slightly different, is treated in loc.~cit.:

\begin{theorem}\label{thm_main_q} Assume that $\ell\ge5$ and let $K=\Q(\sqrt{D})$. We have
$$\Phi_\ell(K,s)=\dfrac{\ell^{r_2(D)}}{\ell-1}\sum_{\b\in\mathcal{B}}A_{\b}(s)\sum_{\chi\in\widehat{G_{\b}}}F(\b,\chi,s)\;,$$
where the $A_{\b}(s)$ are given by the
following table:

\medskip

\centerline{
\begin{tabular}{|c||c|c|c|c|}
\hline
Condition on $D$ & $A_{(1)}(s)$ & $A_{(\sqrt{(-1)^{(\ell-1)/2}\ell})}(s)$ & $A_{(\ell)}(s)$ & $A_{(\ell^{\ell/(\ell-1)})}(s)$\\
\hline\hline
$\ell\nmid D$ & $\ell^{-2s}$ & $0$ & $-\ell^{-2s-1}$ & $1/\ell$\\
\hline
$\ell\mid D$ & $\ell^{-3s/2}$ & $\ell^{-s}-\ell^{-3s/2}$ & $-\ell^{-s-1}$ & $1/\ell$\\
\hline
\end{tabular}}

\medskip

$$F(\b,\chi,s)=\prod_{p\equiv\leg{D}{p}\pmod{\ell},\ p\ne\ell}\left(1+\dfrac{\om_{\chi}(p)}{p^s}\right)\;,$$
where we set:
$$\om_\chi(p)=\begin{cases}
\ell-1&\text{\quad if $\chi(\p_z)=1$}\\
-1&\text{\quad if $\chi(\p_z)\ne1$}\;,\end{cases}$$
where as usual $\p_z$ is any ideal of $K_z$ above $p$.
\end{theorem}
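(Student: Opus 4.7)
The plan is to specialize Corollary \ref{cormain} directly to $k = \Q$ and $\ell \geq 5$. Since $\ell > 2[\Q : \Q] + 1 = 3$, the hypothesis of the corollary holds, so we avoid the extra ramification factor $\gothr^e(\b)$ appearing in Theorem \ref{thm:main}. The proof then reduces to three ingredients: identifying the leading constant, computing the eight entries of the $A_\b(s)$ table by a case analysis on $\b$, and putting $F(\b,\chi,s)$ into the explicit Euler product form via Lemma \ref{lemkp}.

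For the leading constant, Lemma \ref{lemu5}(3) gives $RU(K) = r_2(K) = r_2(D)$, yielding the factor $\ell^{r_2(D)}/(\ell-1)$. The rest of the prefactor in Corollary \ref{cormain} is
\begin{equation*}
\ell^{-\frac{\ell}{\ell-1}s} \prod_{\p \mid \ell} \N(\p)^{-\frac{\ell-1-r(e(\p))}{\ell-1}s},
\end{equation*}
where $\p$ ranges over primes of $K$ above $\ell$ with $\N(\p) = \N_{K/\Q}(\p)^{1/2}$ by Definition \ref{def41}. A direct case-split (split, inert, or ramified) collapses this to $\ell^{-2s}$ when $\ell \nmid D$ (so $e(\p) = 1$, $r(e(\p))=1$) and to $\ell^{-3s/2}$ when $\ell \mid D$ (so $e(\p)=2$, $r(e(\p)) = 2$, $\N(\p)=\ell^{1/2}$). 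These are precisely the factors appearing in the $\b = (1)$ column of the table.

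Next I will enumerate $\calB$ using \eqref{eqn:poss_b}: three elements when $\ell \nmid D$ and four when $\ell \mid D$ (the extra one, $(1-\ze)^{(\ell-1)/2}\Z_{K_z}$, corresponding to $\b = \ell^{1/2}$ in $K$). For each $\b$ we have
\begin{equation*}
A_\b(s) = (\text{prefactor}) \cdot \NC(\b)^s \, P(\b,s) / |(Z_{\b_z}/Z_{\b_z}^\ell)[T]|,
\end{equation*}
where $\NC$ and $P$ are read off from Definition \ref{defB} using Lemma \ref{lempb}, and the denominator is provided by Proposition \ref{propzb}. Setting $q = \N(\ell)^{1/e(\p/\ell)}$ and running through the four cases $b \in \{0,\, 1,\, \lceil m(\p)\rceil - 1,\, m(\p)\}$ of Lemma \ref{lempb}, each of the eight table entries drops out.

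Finally, for $F(\b,\chi,s)$ we note that $\N(p) = p$ for $k = \Q$, and apply Lemma \ref{lemkp} to identify $\calD$ with $\{p \neq \ell : p \equiv \leg{D}{p} \pmod \ell\}$. The two possibilities $\chi(\p_z) = 1$ or $\neq 1$ then give the two values $\ell - 1$ and $-1$ of $\omega_\chi(p)$, collapsing the two product forms in Corollary \ref{cormain} into the single product in the statement. The main obstacle is purely bookkeeping: one must keep careful track of the half-valuations $\ell^{1/2}, \ell^{\ell/(\ell-1)}$ and the square-root norm convention of Definitions \ref{def41} and \ref{defpowfrac} in the ramified case, to ensure the prefactor and the local factor $\NC(\b)^s P(\b,s)/|(Z_{\b_z}/Z_{\b_z}^\ell)[T]|$ combine to the correct entry. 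Once these conventions are pinned down, all eight entries follow by direct calculation.
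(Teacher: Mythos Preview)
Your proposal is correct and follows essentially the same route as the paper: both specialize Corollary \ref{cormain} (applicable since $\ell>2[\Q:\Q]+1$), identify $RU(K)=r_2(D)$ via Lemma \ref{lemu5}, collapse the prefactor to $\ell^{-2s}$ or $\ell^{-3s/2}$, read off $\NC(\b)$, $P(\b,s)$, and $|(Z_{\b_z}/Z_{\b_z}^\ell)[T]|$ from Definition \ref{defB}, Lemma \ref{lempb}, and Proposition \ref{propzb}, and finally invoke Lemma \ref{lemkp} for the Euler product. The only difference is cosmetic: the paper records the intermediate tuples $\NC(\b)$ and $P(\b,s)$ explicitly before multiplying, whereas you describe combining them directly into $A_{\b}(s)$.
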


\begin{proof} The computation is routine, given the following consequences of
our previous results:
\begin{itemize}
\item We have $k=\Q$ so $\ell^{\frac{\ell}{\ell-1}[k:\Q]s}=\ell^{\ell s/(\ell-1)}$.
\item The factor $\prod_{\p\mid\ell}\N(\p)^{...}$ is equal to
$\ell^{-(\ell-2)s/(\ell-1))}$ if $\ell\nmid D$ and to
$\ell^{-(\ell-3)s/(2(\ell-1))}$ if $\ell\mid D$. Multiplied by the first factor
this gives $\ell^{-2s}$ if $\ell\nmid D$ and $\ell^{-3s/2}$
if $\ell\mid D$.
\item We have $\ell^{RU(K)}=\ell^{r_2(D)}$ by Lemma \ref{lemu5}, with $r_2(D) := r_2(\Q(\sqrt{D}))$.
\item By Definitions \ref{defB} and \ref{def41}, 
we have 
$\NC(\b)=(1,*,\ell,\ell^2)$ and
$\NC(\b)=(1,\ell^{1/2},\ell,\ell^{3/2})$ for 
$\ell \nmid D$ and $\ell \mid D$ respectively.
\item As already mentioned, if $k=\Q$ and $\ell>3$ we have $\gothr^e(\b)=(1)$,
so the terms and conditions involving $\gothr^e(\b)$ disappear (in other
words we use Corollary \ref{cormain}).
\item By Lemma 
\ref{lemkp}, we have $p\in\calD$ if and only if 
$p\equiv\leg{D}{p}\pmod{\ell}$ and $p\ne\ell$, and $\calD_{\ell}=\emptyset$
when $\ell\ne3$ by what we have just said.
\item By Lemma \ref{lempb} and Definition \ref{defB} of $P(\b,s)$, when
$\ell\nmid D$ and $\ell \mid D$ respectively.
we have $P(\b,s)=(1,*,-\ell^{-s},1)$, and 
$P(\b,s)=(1,1-\ell^{-s/2},-\ell^{-s/2},1)$ for $\ell \nmid D$ respectively for the 
usual sequence of $\b$.
\item The values of 
$|(Z_{\b_z}/Z_{\b_z}^\ell)[T]|$ are given in Proposition \ref{propzb}.
\end{itemize}
\end{proof}

\begin{corollary}\label{lelldef} Assume that $\ell\ge5$, and set
$L_\ell(s)=1+(\ell-1)/\ell^{2s}$ if $\ell\nmid D$ and
$L_\ell(s)=1+(\ell-1)/\ell^s$ if $\ell\mid D$.
There exists a function $\phi_D(s)=\phi_{D,\ell}(s)$, holomorphic for 
$\Re(s)>1/2$, such that
$$\Phi_\ell(K, s)=
%\sum_{L\in\calFL(K)}\dfrac{1}{f(L)^s}=
\phi_D(s)+\dfrac{1}{(\ell-1)\ell^{1-r_2(D)}}L_{\ell}(s)\prod_{p\equiv\leg{D}{p}\pmod{\ell},\ p\ne\ell}\left(1+\dfrac{\ell-1}{p^s}\right)\;.$$
\end{corollary}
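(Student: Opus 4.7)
The plan is to isolate from Theorem \ref{thm_main_q} the contribution of the trivial character $\chi_0 \in \widehat{G_\b}$, identify it with the claimed main term, and show that the remaining contributions (parametrized by nontrivial $\chi$) assemble into a function holomorphic for $\Re(s) > 1/2$.

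For the trivial character, $\mathcal{D}'(\chi_0) = \mathcal{D}$, hence $F(\b, \chi_0, s) = \prod_{p \equiv \leg{D}{p} \pmod{\ell},\ p \neq \ell}\bigl(1 + (\ell - 1)/p^s\bigr)$, independently of $\b$. A direct telescoping calculation from the table of $A_\b(s)$ values gives $\sum_{\b \in \calB} A_\b(s) = L_\ell(s)/\ell$ in both cases: for $\ell \nmid D$ the sum collapses to $\ell^{-2s}(1 - 1/\ell) + 1/\ell$, and for $\ell \mid D$ the entries at $\b = (1)$ and $\b = (\sqrt{(-1)^{(\ell-1)/2}\ell})$ cancel in pairs to leave $\ell^{-s}(1 - 1/\ell) + 1/\ell$. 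Consolidating the prefactors of Theorem \ref{thm_main_q} identifies the $\chi_0$-contribution with $\frac{L_\ell(s)}{(\ell - 1)\ell^{1 - r_2(D)}}\prod_p(1 + (\ell-1)/p^s)$, the main term in the corollary. Defining $\phi_D(s)$ to be $\Phi_\ell(K, s)$ minus this main term, we obtain
\[
\phi_D(s) = \frac{\ell^{r_2(D)}}{\ell - 1}\sum_{\b \in \calB} A_\b(s) \sum_{\chi \in \widehat{G_\b} \setminus \{\chi_0\}} F(\b, \chi, s),
\]
and since each $A_\b(s)$ is a polynomial in $\ell^{\pm s}$ and hence entire, it suffices to show that $F(\b, \chi, s)$ is holomorphic for $\Re(s) > 1/2$ whenever $\chi$ is nontrivial.

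For this step, I would invoke Proposition \ref{prop:iso_red} (with the analogous identification over $K_z = k_z$ in the special case) to regard a nontrivial $\chi$ as a nontrivial character $\widetilde\chi$ of $\Cl_{\b'}(K')/\Cl_{\b'}(K')^\ell$, and compare $F(\b, \chi, s)$ with the Hecke $L$-function $L(s, \widetilde\chi)$, which is entire since $\widetilde\chi$ is nontrivial. For $p \in \calD$, split completely in $K'/\Q$ as $\prod_i \tau^i(\p_{K'})$, the Euler factor of $L(s, \widetilde\chi)$ at $p$ equals $\prod_{i=0}^{\ell - 2}\bigl(1 - \widetilde\chi(\p_{K'})^{g^i}/p^s\bigr)^{-1}$ (using $\widetilde\chi(\tau x) = \widetilde\chi(x)^g$ from the $[\tau - g]$-eigenspace structure), which evaluates to $(1 - 1/p^s)^{-(\ell - 1)}$ if $\widetilde\chi(\p_{K'}) = 1$ and to $(1 - 1/p^s)/(1 - 1/p^{\ell s})$ otherwise, using $\prod_{\eta \in \mu_\ell}(1 - \eta/p^s) = 1 - 1/p^{\ell s}$. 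An elementary expansion shows that in both subcases this differs from the Euler factor $1 + \omega_\chi(p)/p^s$ of $F(\b, \chi, s)$ by a ratio $1 + O(p^{-2\Re(s)})$. The Euler factors of $L(s, \widetilde\chi)$ at primes of $K'$ of residue degree $\geq 2$ and at the finitely many ramified primes likewise deviate from $1$ by $O(p^{-2\Re(s)})$. Hence $F(\b, \chi, s) = L(s, \widetilde\chi) \cdot E(s, \chi)$ with $E(s, \chi)$ an Euler product absolutely convergent for $\Re(s) > 1/2$, giving the required holomorphy.

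The main obstacle is carrying out the Euler factor comparison rigorously: one must verify that $\chi(\p_z)$ is well-defined independent of the choice of $\p_z \mid p$ (which follows from $\chi$ lying in the $[T]$-eigenspace), track the Galois action correctly through Lemma \ref{lemt1t2} and Proposition \ref{prop:iso_red}, distinguish the two splitting types of $p \in \calD$ from Proposition \ref{propkp}, and handle the special case using the analogous mirror-field substitute $K_z = k_z$.
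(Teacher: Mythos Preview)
Your proposal is correct and follows essentially the same route as the paper: isolate the trivial-character contribution, verify $\sum_{\b}A_\b(s)=L_\ell(s)/\ell$ from the table, and then compare each nontrivial $F(\b,\chi,s)$ to a Hecke $L$-function attached to the extension of $\chi$ to the ray class group of $K'$ via Proposition~\ref{prop:iso_red}. The only stylistic difference is that the paper packages the comparison additively (summing over the orbit $\chi,\chi^2,\dots,\chi^{\ell-1}$ and writing $\sum_j F(\b,\chi^j,s)=g(s)+\sum_j L(s,\chi^j)$ with $g$ supported on squarefull integers), whereas you do it multiplicatively for each $\chi$ separately ($F=L\cdot E$ with $E$ an Euler product whose local factors are $1+O(p^{-2s})$). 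Your formulation is arguably cleaner, since ``supported on squarefull integers'' is a slight overstatement in the additive version---the difference is really only supported on non-squarefree integers---but of course the Euler-product structure makes either version go through. Your cautionary remarks about the special case are appropriate; note in particular that for $\ell\equiv 3\pmod 4$ the primes $p\equiv -1\pmod\ell$ in $\calD$ are not totally split in $k_z$, so the single-$L$-function comparison as written does not directly apply there, but this is moot once one knows $G_\b$ is trivial in that case (Proposition~\ref{prop:count_sel}).
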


\begin{proof} Same as in Proposition 7.5 of \cite{Coh-Mor}:
The main term is the contribution of the
trivial characters, and $\phi_D(s)$ is the contribution of 
the nontrivial characters: we first regard each 
$\chi \in \widehat{G_{\b}}$ as a character of
$\frac{ \Cl_{\mfb'}(K')}{ \Cl_{\mfb'}(K')^{\ell} }$ by Proposition \ref{prop:iso_red} and then by 
setting $\chi$ equal to $1$ on the orthogonal complement of
$\frac{ \Cl_{\mfb'}(K')}{ \Cl_{\mfb'}(K')^{\ell} }[\tau - g]$. By the previous lemma, the primes occurring in the product
are precisely those for which $p$ is totally split in $K'$. Therefore, for each set of nontrivial characters 
$\chi, \chi^2, \dots, \chi^{\ell - 1} \in \widehat{G_b}$, the sum of products
$F(\b, \chi, s)$ may be written as $g(s) + \sum_{\chi} L(s, \chi)$, where $L(s, \chi)$ is the (holomorphic)
Hecke $L$-function associated to $\chi$, 
and $g(s)$ is a Dirichlet series supported on squarefull numbers, absolutely convergent and therefore 
holomorphic in $\Re(s) > 1/2$. Therefore $\phi_D(s)$ is holomorphic in $\Re(s) > 1/2$ as well. 
We also note that the product of the main term may similarly be written as
$h(s) + L(s, \omega_0)$, where $\omega_0$ is the trivial Hecke character, and $h(s)$ satisfies
the same properties as $g(s)$.

The $\ell = 3$ case is slightly different due to the nontriviality of $\gothr^e(\b)$;
see \cite{Coh-Mor}.
\end{proof}

%This brings us to our asymptotic formulas:

\begin{corollary}\label{corasym} Assume that $\ell\ge5$ and denote by $M_\ell(D;X)$ 
the number of $L\in\calFL(\Q(\sqrt{D}))$ such that $f(L)\le X$. 
Set $c_1(\ell)=1/((\ell-1)\ell^{1-r_2(D)})$, $c_2(\ell)=(\ell^2+\ell-1)/\ell^2$ when
$\ell\nmid D$ or $c_2(\ell)=2-1/\ell$ when $\ell\mid D$.
\begin{enumerate}\item In the general case, or in the special case with $\ell\equiv1\pmod4$, for any $\eps>0$ we have
$$M_\ell(D;X)=C_\ell(D)X+O_D(X^{1-\frac{2}{\ell + 3}+\eps})\text{\;, with}$$
$$C_\ell(D)=c_1(\ell)c_2(\ell)\Res_{s=1}\prod_{p\equiv\leg{D}{p}\pmod{\ell}}\Bigl(1+\frac{\ell-1}{p^s}\Bigr)\;,$$
and in the special case the product is
equivalently over $p\equiv1\pmod{\ell}$.

\item In the special case with $\ell\equiv3\pmod4$, for any $\eps>0$ we have
$$M_\ell(D;X)=C_\ell(D)(X\log(X)+C'_\ell(D))+O_D(X^{1-\frac{2}{\ell + 3}+\eps})\text{\;, with}$$
$$C_\ell(D)=c_1(\ell)c_2(\ell)\lim_{s\to1^+}(s-1)^2\prod_{p\equiv\pm1\pmod\ell}\Bigl(1+\frac{\ell-1}{p^s}\Bigr)\;,$$
and $C'_\ell(D)$ can also be given explicitly if desired.
\end{enumerate}\end{corollary}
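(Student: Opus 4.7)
\textbf{Proof plan for Corollary \ref{corasym}.}

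The starting point is Corollary \ref{lelldef}, which already writes $\Phi_\ell(K,s) = \phi_D(s) + M(s)$ with $\phi_D$ holomorphic on $\re(s) > 1/2$, and $M(s)$ the explicit Euler-product main term. Since $M_\ell(D; X) = \sum_{\N(f(L)) \leq X} 1$, the plan is to extract the asymptotic via a truncated Perron-type argument, once the pole structure of $M(s)$ at $s = 1$ is identified and vertical-line growth to its left is controlled.

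To locate the pole, I will identify the Euler product in $M(s)$ with (a power of) a Dedekind zeta function up to a factor that converges absolutely for $\re(s) > 1/2$. By Lemma \ref{lemkp}, the primes $p \equiv (D/p) \pmod \ell$ are precisely those totally split in a field $L$: namely $L = K'$ (degree $\ell - 1$) in the general case, $L = \Q(\zeta_\ell)$ (degree $\ell - 1$) in the special case with $\ell \equiv 1 \pmod 4$, and $L = \Q(\zeta_\ell + \zeta_\ell^{-1})$ (degree $(\ell-1)/2$) in the special case with $\ell \equiv 3 \pmod 4$. A factor-by-factor check shows $(1 + (\ell-1)/p^s)(1 - p^{-s})^{\ell-1} = 1 + O(p^{-2s})$ for such $p$. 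In the first two cases this matches $\zeta_L(s)$ (whose $p$-factor is $(1-p^{-s})^{-(\ell-1)}$), while in the third case it matches $\zeta_L(s)^2$ (whose $p$-factor is $(1-p^{-s})^{-(\ell-1)}$ with multiplicity doubled). Consequently $M(s) = \zeta_L(s)^r \cdot h(s) \cdot (\text{harmless factors})$ with $h(s)$ absolutely convergent on $\re(s) > 1/2$, $r = 1$ in the first two cases, and $r = 2$ in the third, giving the expected simple or double pole at $s = 1$.

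The residue at $s = 1$ then assembles the leading constant $C_\ell(D)$ from the prefactor $c_1(\ell) = 1/((\ell-1)\ell^{1-r_2(D)})$, the value $L_\ell(1) = c_2(\ell)$, and the residue of the Euler product, matching the stated formula. In the double-pole case, expanding the Laurent series of $M(s)(s-1)^{-2}$ produces both the $X \log X$ term and the $X$ term $C_\ell(D)\, C'_\ell(D)$, where $C'_\ell(D)$ comes from the $(s-1)^{-1}$ coefficient of $M(s)$ and is explicit in principle.

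The main technical step, and the source of the error term, is bounding the contour integral after shifting to $\re(s) = 1/2 + \epsilon$ in a truncated Perron formula of height $T$. Invoking the standard convexity bound $|\zeta_L(1/2 + it)| \ll_\epsilon (1 + |t|)^{[L:\Q]/4 + \epsilon}$, the vertical integral has size $\ll X^{1/2} T^{(\ell-1)/4 + \epsilon}$ (the effective degree is $\ell - 1$ in all three cases, counting $\zeta_L^2$ in the double-pole case), while the Perron truncation costs $\ll X/T$. Balancing at $T = X^{2/(\ell+3)}$ yields the claimed error $O_D(X^{1 - 2/(\ell+3) + \epsilon})$. The chief obstacle is verifying uniform polynomial growth on the critical strip for the auxiliary factor $h(s)$ and for $\phi_D(s)$: for the former this is immediate from absolute convergence past $\re(s) = 1/2$, while for the latter (as in \cite{Coh-Mor} for $\ell = 3$) one factors the nontrivial-character contributions to $\phi_D(s)$ into products of Hecke $L$-functions via Proposition \ref{prop:iso_red} and appeals to standard convexity bounds.
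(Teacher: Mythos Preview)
Your proposal is correct and follows essentially the same approach as the paper: both start from Corollary~\ref{lelldef}, factor the main term and the nontrivial-character contributions into degree-$(\ell-1)$ Hecke/Dedekind $L$-functions, apply truncated Perron with a contour shift to $\Re(s)=1/2+\eps$, invoke convexity to bound the vertical integral by $X^{1/2}T^{(\ell-1)/4+\eps}$, and balance against the truncation error $X^{1+\eps}/T$ at $T=X^{2/(\ell+3)}$. Your identification of the relevant fields $K'$, $\Q(\zeta_\ell)$, and $\Q(\zeta_\ell+\zeta_\ell^{-1})$ in the three cases is a nice concrete touch, but the underlying analytic argument is the same as the paper's.
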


\begin{proof} The result follows by the same proof as in \cite{Coh-Mor},
with $C_\ell(D)$ equal to the residue at $s=1$
of $\Phi_\ell(K,s)$. 

We briefly recall how to obtain the error term. By the proof of Corollary \ref{lelldef}, it equals 
(up to an implied constant depending on $D$ and $\ell$)
the error made in estimating partial sums of Hecke $L$-functions
of degree $\ell -1$.
We carry this out in the standard way, subject to the limitation that we may not shift any contour to $\Re(s) \leq 1/2$.
We have by Perron's formula, for each Hecke $L$-function $\xi(s) = \sum_n a(n) n^{-s}$ and any $c > 1$,
\[
\sum_{n < X} a(n) = \frac{1}{2 \pi i} \int_{c - i \infty}^{c + i \infty} \xi(s) \frac{X^s}{s} ds\;,
\]
and we shift the portion of the contour from $c - iT$ to $c + iT$ to $\Re(s) = \sigma$ 
for $\sigma \in (1/2, 1)$ and $T> 0$ to be determined. 
By convexity we have 
$|\xi(s)| \ll T^{\frac{\ell - 1}{2} (1 - \sigma + \eps)}$, and choosing $c = 1 + \eps, \ \sigma = 1/2 + \eps$ 
our integral is $\ll 
T^{ \frac{\ell - 1}{2}( \frac{1}{2} + 2 \eps)} X^{\frac{1}{2} + \eps} + \frac{X^{1 + 2 \eps}}{T}$;
then choosing $T = X^{\frac{2}{\ell + 3}}$ we obtain an error term of $X^{1 - \frac{2}{\ell + 3}+\eps}$.
\end{proof}

In a separate paper by the first author \cite{Cohasym}, one explains how to compute the constants $C_\ell(D)$
to high accuracy ($100$ decimal digits, say) for reasonably small values of $|D|$. For example, we have
\[C_3(-3)  = 0.0669077333013783712918416\cdots,\quad C_3(-4) = 0.1362190676241212841449867\cdots.\\
%C_3(-7)&=0.1533459546528706230534532\cdots,\quad C_3(5)=0.0818840074459636358232037\cdots,\\
%C_3(8)&=0.0697794325982058645585930\cdots,\quad C_3(12)=0.0803828977056554045622405\cdots,\\
%C_5(-3)&=0.0507853244497800993782016\cdots,\quad C_5(-4)=0.0533779051631195981220655\cdots,\\
%C_5(-7)&=0.0646534750523185710598435\cdots,\quad C_5(5)=0.0203781870559037146558936\cdots,\\
%C_5(8) &=0.0134747747475919140437863\cdots,\quad C_5(12)=0.0154777556594427976114120\cdots,\\
%C_7(-3)&=0.0296332163247300745247219\cdots,\quad C_7(-4)=0.0292582526699757840365146\cdots,\\
%C_7(-7)&=0.0121052634214512298018579\cdots,\quad C_7(5)=0.0064676733264714100259068\cdots,\\
%C_7(8)&=0.0057454974330245481725806\cdots,\quad C_7(12)=0.0035078864947419330918974\cdots.
\]

%It is natural to ask whether we can obtain estimates for 
%the count $N_{D_\ell}^{\pm}(X)$ of
%degree $\ell$ fields $L$ with Galois group $D_\ell$, $|\Disc(L)| \leq X$, and whose quadratic
%resolvent is respectively real or imaginary,  A plausible guess is that for some $C_{\ell} > 0$ we have
%\begin{equation}\label{eqn:dl_guess}
%N_{D_\ell}^-(X)\sim C_\ell X^{2/(\ell-1)}\text{\quad and\quad}N_{D_\ell}^+(X)\sim \dfrac{C_\ell}{\ell}X^{2/(\ell-1)}\;.
%\end{equation}

%By Davenport--Heilbronn this is known for
%$\ell=3$ with $C_3=1/(4\zeta(3))$, but it is unclear how to recover
%this value of $C_3$, even heuristically, from our work. As such we still seem to be far from a proof
%of \eqref{eqn:dl_guess}.

\section{Study of the Groups $G_{\b}$}\label{sec:gb}

In this section, where we continue to assume that $k = \Q$ and also assume that $\ell \geq 5$,
we study the groups $G_{\b}$
appearing in Theorem \ref{thm_main_q}. Much of this was carried out 
in our paper \cite{CRST} with Rubinstein-Salzedo, and 
we give only a brief account of those
results which are proved there.

We are indebted to 
Hendrik Lenstra for help in this section. 

\smallskip
We recall a few of the important notations
used previously:
\begin{itemize}
\item $K_z$ is an abelian extension of $\Q$ containing the $\ell$th roots of unity, 
with $G = \Gal(K_z/\Q) = \langle \tau, \tau_2 \rangle$
or $\langle \tau \rangle$ in the general and special cases respectively.
\item As in Proposition \ref{propbij1}, $N_z=K_z(\root\ell\of\al)$ is a cyclic extension, for which we wrote 
$\alpha \Z_{K_z} = \q^{\ell} \prod_{0 \leq i \leq \ell - 2} \mfa_i^{g^i}$ and (in Proposition \ref{propsplit2})
$\prod_{0 \leq i \leq \ell - 2} \mfa_i = \mfa_{\alpha} \Z_{K_z}$ for an ideal $\mfa_{\alpha}$ of $K$.
\item We recall the possibilities for $\b$ (equivalently, $\b_z$) from  \eqref{eqn:poss_b},
and we continue to use the notation $(-, -, -, -)$ for quantities depending on $\b$.
\end{itemize}

For any $\b$ as in \eqref{eqn:poss_b} we define $\b^* := (1 - \z_\ell)^\ell/\b_z$.

\begin{proposition}\label{prop:kum_hec}
With the notation above, we have $\f(N_z/K_z)\mid\b_z$ if and only 
if $\ov{\al}\in S_{\b^*}(K_z)$.
\end{proposition}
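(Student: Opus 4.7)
The plan is to prove this equivalence locally at each prime of $K_z$, matching the local contributions to $\f(N_z/K_z)$ against the local conditions defining $S_{\b^*}(K_z)$. Both sides decompose: $\f(N_z/K_z)\mid\b_z$ is equivalent to $c(\p_z)\le v_{\p_z}(\b_z)$ at every prime $\p_z$ of $K_z$, where $c(\p_z)$ denotes the local conductor exponent of $N_z/K_z$; while $\ov{\al}\in S_{\b^*}(K_z)$ amounts to $\al$ being a virtual unit together with the solubility of $x^\ell\equiv\al\pmod{^*\p_z^{v_{\p_z}(\b^*)}}$ at every $\p_z$. Since each $\b_z$ in \eqref{eqn:poss_b} is supported only at primes above $\ell$, for $\p_z\nmid\ell$ the condition $c(\p_z)=0$ reduces to $v_{\p_z}(\al)\equiv 0\pmod{\ell}$ by elementary tame Kummer theory, while $v_{\p_z}(\b^*)=0$ makes the congruence condition vacuous. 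The primes away from $\ell$ thus match automatically, contributing the ``away from $\ell$'' part of the virtual-unit condition.

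At a prime $\p_z\mid\ell$ the key tool is the classical Hecke--Kummer formula for the local conductor of a cyclic Kummer extension, worked out in detail in \cite[Sections~2--3]{CoDiOl3}. Assuming $\al$ is a local $\ell$-unit at $\p_z$, this formula expresses $c(\p_z)$ as an explicit affine function of the invariant $A_\al(\p)$ of Definition \ref{defpowfrac}, whose relevant residue-class restrictions are recorded in Proposition \ref{propap}. Combined with the identity $v_{\p_z}(\b^*)+v_{\p_z}(\b_z)=v_{\p_z}((1-\z_\ell)^\ell)=M(\p_z)$, the bound $c(\p_z)\le v_{\p_z}(\b_z)$ translates into the correct lower bound on $A_\al(\p)$, which by the very definition of $A_\al(\p)$ is exactly the solubility of $x^\ell\equiv\al\pmod{^*\p_z^{v_{\p_z}(\b^*)}}$ in $K_z$. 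If instead $\al$ fails to be a local $\ell$-unit at some $\p_z\mid\ell$, the resulting Kummer extension is wildly ramified at $\p_z$ with conductor exponent strictly exceeding $M(\p_z)\ge v_{\p_z}(\b_z)$, so the conductor-side condition forces $\al$ to be a local $\ell$-unit at every prime above $\ell$; this supplies the remaining part of the virtual-unit condition.

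The main obstacle is the wild-prime analysis relating $c(\p_z)$ and $A_\al(\p)$: this rests on a careful computation with the higher ramification filtration of $K_z^{\p_z}$ and the integrality constraints hidden inside the ceiling functions of Theorem \ref{thmfnk}. Because these computations are already contained in \cite{CoDiOl3}, the proof essentially collates their local results prime by prime, checks each case of \eqref{eqn:poss_b}, and reads off the equivalence from the valuation identity $v_{\p_z}(\b^*)+v_{\p_z}(\b_z)=M(\p_z)$.
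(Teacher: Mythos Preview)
Your proposal is correct and follows essentially the same route as the paper's proof: both rely on the explicit conductor formula of \cite[Theorem 3.7]{CoDiOl3}, the identity $\b^*\b_z=(1-\z_\ell)^\ell\Z_{K_z}$, and the residue-class constraints of Proposition \ref{propap} to match the conductor bound with the congruence defining $S_{\b^*}(K_z)$. The only cosmetic difference is that you phrase everything locally prime-by-prime, whereas the paper applies the global conductor formula once and reads off the conditions; in particular, the paper handles the virtual-unit condition in one stroke via $\a_\al=\Z_K$, while you split it into the tame part (primes away from $\ell$) and the wild part (primes above $\ell$).
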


\begin{proof} This is very classical, and essentially due to Kummer and
Hecke: for instance, by Theorem 3.7 of \cite{CoDiOl3} we have
$$\f(N_z/K_z)=(1-\z_\ell)^\ell\a_{\al}/\prod_{\p_z\mid\ell,\ \p_z\nmid\a_{\al}}\p_z^{A_{\al}(\p_z)-1}\;.$$
Thus, since $\a_{\al}$ is coprime to the product then 
$\f(N_z/K_z)\mid(1-\z_\ell)^\ell$ if and only if $\a_{\al}=\Z_K$, i.e.,
if and only if $\al$ is a virtual unit. If this is the case, then $\f(N_z/K_z)\mid\b_z$ if
and only if the product is a multiple of $(1-\z_\ell)^\ell/\b_z=\b^*$, and by the definition of $A_{\al}$ and
the congruence in Proposition \ref{propap}, this is equivalent to the
solubility of the congruence $x^\ell/\al\equiv1\pmod{^*\b^*}$, hence to
$\ov{\al}\in S_{\b^*}(K_z)$.\end{proof}

\begin{theorem}\cite[Corollary 3.2]{CRST}\label{thm:main_kummer} Writing 
$C_{\b}:=\Cl_{\b_z}(K_z)/\Cl_{\b_z}(K_z)^\ell$, so that $G_{\b} = C_{\b}[T]$, and $\bmu_\ell$ for the group of $\ell$th roots of 
unity,
there exists a perfect, $G$-equivariant pairing of 
$\F_\ell[G]$-modules
$$C_{\b}\times S_{\b^*}(K_z)\mapsto\bmu_\ell\;.$$
\end{theorem}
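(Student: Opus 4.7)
The plan is to construct the pairing explicitly, exploiting the combination of the Artin map with Kummer theory, and then to identify it with the classical Kummer pairing between the Galois group of a specific abelian $\ell$-extension of $K_z$ and its Kummer generators. Given $\overline{u}\in S_{\b^*}(K_z)$, Proposition \ref{prop:kum_hec} guarantees that the extension $L_u:=K_z(\sqrt[\ell]{u})/K_z$ has conductor dividing $\b_z$. Thus for any ideal $\mfc$ coprime to $\b_z$, the Artin symbol $\sigma_{\mfc}\in\Gal(L_u/K_z)$ is defined, and I would set
\[
\langle\overline{\mfc},\overline{u}\rangle \;:=\; \frac{\sigma_{\mfc}(\sqrt[\ell]{u})}{\sqrt[\ell]{u}}\;\in\;\bmu_\ell.
\]

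First I would check the obvious well-definedness and $\F_\ell$-bilinearity. The choice of $\ell$th root is irrelevant because $\sigma_{\mfc}$ fixes $\bmu_\ell\subset K_z$; replacing $u$ by $uv^\ell$ only changes $\sqrt[\ell]{u}$ by $v\in K_z$, which is killed by $\sigma_{\mfc}/\mathrm{id}$. Replacing $\mfc$ by $\mfc\mfd^\ell$ replaces $\sigma_{\mfc}$ by $\sigma_{\mfc}\sigma_{\mfd}^\ell$, and $\sigma_{\mfd}^\ell$ acts trivially on $L_u$ since $[L_u:K_z]\mid\ell$; principal ideals generated by elements $\equiv1\pmod{^*\b_z}$ are in the kernel of the Artin map, so the definition descends to $C_{\b}$. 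The $G$-equivariance
\[
g\cdot\langle\overline{\mfc},\overline{u}\rangle \;=\;\langle g(\overline{\mfc}),g(\overline{u})\rangle
\]
is the standard calculation
\(\sigma_{g(\mfc)}(\sqrt[\ell]{g(u)})/\sqrt[\ell]{g(u)}
= g\sigma_{\mfc}g^{-1}(g\sqrt[\ell]{u})/g(\sqrt[\ell]{u})
= g(\sigma_{\mfc}(\sqrt[\ell]{u})/\sqrt[\ell]{u})\),
using the naturality $\sigma_{g(\mfc)}=g\sigma_{\mfc}g^{-1}$ of the Artin map.

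For perfectness I would invoke the duality between class field theory and Kummer theory. Let $M_{\b}/K_z$ be the maximal elementary abelian $\ell$-extension whose conductor divides $\b_z$. Class field theory provides an Artin isomorphism $C_{\b}\xrightarrow{\sim}\Gal(M_{\b}/K_z)$, under which our pairing becomes $\Gal(M_{\b}/K_z)\times S_{\b^*}(K_z)\to\bmu_\ell$, $(\sigma,\overline{u})\mapsto\sigma(\sqrt[\ell]{u})/\sqrt[\ell]{u}$. Since $\bmu_\ell\subset K_z$, Kummer theory identifies $M_{\b}$ as the compositum of all $K_z(\sqrt[\ell]{u})$ whose conductor divides $\b_z$, and Proposition \ref{prop:kum_hec} identifies this set of $\overline{u}$ with $S_{\b^*}(K_z)$ exactly. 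The restriction of the classical (perfect) Kummer pairing $\Gal(M_{\b}/K_z)\times\Hom(\Gal(M_{\b}/K_z),\bmu_\ell)\to\bmu_\ell$ to this description is our pairing, so perfectness follows.

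The main obstacle I anticipate is the clean identification of $M_{\b}$ with the Kummer compositum of $\{K_z(\sqrt[\ell]{u}):\overline{u}\in S_{\b^*}(K_z)\}$; one direction is Proposition \ref{prop:kum_hec}, and the other requires verifying that an arbitrary Kummer element generating an extension of conductor dividing $\b_z$ already satisfies the solubility condition defining $S_{\b^*}(K_z)$. This hinges on the normalisation $\b_z\cdot\b^*=(1-\zeta_\ell)^\ell$ and on the congruence part of Proposition \ref{propap}; once these are in hand, everything else is formal and the $G$-equivariance of the Artin isomorphism makes the pairing automatically $\F_\ell[G]$-linear on both sides.
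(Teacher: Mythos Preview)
Your proposal is correct and follows exactly the same approach as the paper: define the pairing by $(\overline{\mfc},\overline{u})\mapsto\sigma_{\mfc}(\sqrt[\ell]{u})/\sqrt[\ell]{u}$ via the Artin map, identifying it with the classical Kummer pairing. The paper's proof is a one-line sketch stating only the definition and invoking Proposition~\ref{prop:kum_hec} implicitly; your version spells out the well-definedness, $G$-equivariance, and perfectness arguments that the paper leaves to the reader (or to the cited \cite{CRST}).
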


\begin{proof} This is the Kummer pairing: given $\ov{\a} \in C_{\b}$, let $\sigma_{\a}$
denote its image under the Artin map; given $\ov{\al}\in S_{\b^*}(K_z)$, let $\al$ be any lift;
then define the pairing by 
$(\ov{\a},\ov{\al})\mapsto \sigma_{\a}(\root\ell\of\al)/\root\ell\of\al\in\bmu_\ell.$
\end{proof}

\begin{corollary}\cite[Corollary 3.3 (in part)]{CRST}\label{cor:gb_pairing} In the general case, where $T=\{\tau-g,\tau_2+1\}$,
define $T^*=\{\tau-1,\tau_2+1\}$, and in the special case, where $T=\{\tau+g\}$, define
$T^*=\{\tau+1\}$. Then
we have a perfect pairing
$$G_{\b}\times S_{\b^*}(K_z)[T^*]\mapsto\bmu_\ell\;.$$
In particular, we have
$$|G_{\b}| = |S_{\b^*}(K_z)[T^*]|\;.$$

\end{corollary}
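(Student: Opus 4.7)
The plan is to deduce this by restricting the perfect $G$-equivariant Kummer pairing of Theorem \ref{thm:main_kummer} to suitable eigenspaces. Since $|G|$ is coprime to $\ell$, Lemma \ref{lemidem} decomposes both $C_{\b}$ and $S_{\b^*}(K_z)$ into direct sums of simultaneous $G$-eigenspaces indexed by characters $\chi \colon G \to \F_\ell^*$. The Galois group acts on $\bmu_\ell \subset K_z$ via a character $\omega$ with $\omega(\tau) = g$ in both cases, and $\omega(\tau_2) = 1$ in the general case, since $\z_\ell \in k_z$ is fixed by $\tau_2$.

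The $G$-equivariance of the pairing then forces the restriction to a pair of eigenspaces $C_{\b}(\chi) \times S_{\b^*}(K_z)(\chi')$ to vanish unless $\chi\chi' = \omega$, and when $\chi\chi' = \omega$ perfection of the full pairing yields perfection of the restricted pairing between these dual eigenspaces. It then remains to match the defining conditions of $T$ and $T^*$ via direct character computation. In the general case, $G_{\b} = C_{\b}[T]$ is the single eigenspace where $\chi(\tau) = g$ and $\chi(\tau_2) = -1$, so the dual character $\chi' = \omega\chi^{-1}$ satisfies $\chi'(\tau) = 1$ and $\chi'(\tau_2) = -1$, which is exactly the eigenspace cut out by $T^* = \{\tau - 1,\tau_2 + 1\}$. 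In the special case, $G_{\b}$ is the eigenspace where $\chi(\tau) = -g$; the dual satisfies $\chi'(\tau) = -1$, matching $T^* = \{\tau + 1\}$. Hence $S_{\b^*}(K_z)[T^*]$ is the sum of the dual eigenspaces, the restricted pairing is perfect, and $|G_{\b}| = |S_{\b^*}(K_z)[T^*]|$ follows.

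The only subtlety I anticipate is getting the $G$-action on $\bmu_\ell$ correct, in particular noticing that $\tau_2$ acts trivially in the general case because $\z_\ell \in k_z$. Once that is in hand, the character bookkeeping collapses to the matching computations above, and the corollary follows at once from the perfection of the Kummer pairing.
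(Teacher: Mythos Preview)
Your proposal is correct and takes essentially the same approach as the paper: both restrict the $G$-equivariant Kummer pairing of Theorem~\ref{thm:main_kummer} to eigenspaces, using that $\tau$ acts on $\bmu_\ell$ by $g$ to match $C_{\b}[\tau-g^j]$ with $S_{\b^*}(K_z)[\tau-g^{1-j}]$ (taking $j=1$ and $j=(\ell+1)/2$ respectively). Your treatment is in fact slightly more explicit than the paper's terse proof, since you spell out the $\tau_2$-computation (that $\tau_2$ fixes $\z_\ell$, so the $\tau_2$-eigenvalue $-1$ is self-dual), which the paper leaves implicit.
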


\begin{proof} 
Recalling that $\tau(\z_\ell) = \z_\ell^g$, for any $j$ the preceding corollary yields 
a perfect pairing 
$$C_{\b}[\tau-g^j]\times S_{\b^*}(K_z)[\tau-g^{1-j}]\mapsto\bmu_\ell\;.$$

We conclude by taking $j = 1$ and $j = (\ell + 1)/2$ in the general and special cases respectively.
\end{proof}

\begin{proposition}\label{prop:sc_res}
In the special case, we have
$\Cl(\Q_z)/\Cl(\Q_z)^{\ell}[\tau + 1] = \{1\}\;.$
\end{proposition}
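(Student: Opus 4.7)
My plan is to reduce the statement to the assertion that $\ell$ does not divide the class number of the quadratic subfield $K = \Q(\sqrt{\ell^*}) \subset \Q_z$ via a standard reflection argument, and then invoke the corresponding class number result.

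The subfield $K$ is fixed by $H = \langle \tau^2 \rangle$, the unique subgroup of $G = \Gal(\Q_z/\Q)$ of index $2$ and order $(\ell-1)/2$. Since $|H|$ is coprime to $\ell$, the extension of ideals $j \colon \Cl(K) \to \Cl(\Q_z)$ and the relative norm $N \colon \Cl(\Q_z) \to \Cl(K)$ compose (in either order) to multiplication by $|H|$ on $\ell$-parts, hence induce an isomorphism $\Cl(K)/\ell \cong (\Cl(\Q_z)/\ell)^H$. Since $\ell \nmid |G|$, the $\F_\ell[G]$-module $\Cl(\Q_z)/\ell$ is semisimple, and its $H$-fixed part decomposes as the sum of the $\tau$-eigenspaces with eigenvalues $\pm 1$, namely $(\Cl(\Q_z)/\ell)[\tau - 1] \oplus (\Cl(\Q_z)/\ell)[\tau + 1]$. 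The first summand equals $\Cl(\Q)/\ell = 0$ by the same reflection applied with the full group $\langle \tau \rangle$ in place of $H$. So $(\Cl(\Q_z)/\ell)[\tau + 1] \cong \Cl(K)/\ell$, and the proposition becomes $\ell \nmid h(K)$.

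For $\ell \equiv 3 \pmod 4$, $K = \Q(\sqrt{-\ell})$ is imaginary quadratic, and the bound $h(K) \ll \sqrt{\ell}\log\ell < \ell$ (from the analytic class number formula and Minkowski) handles all but small $\ell$, which can be verified by hand. For $\ell \equiv 1 \pmod 4$, $K = \Q(\sqrt{\ell})$ is real quadratic, and the assertion $\ell \nmid h(K)$ is essentially the Ankeny--Artin--Chowla conjecture. Via the Kummer pairing of Corollary \ref{cor:gb_pairing} together with Proposition \ref{prop:kum_hec}, the same statement can equivalently be recast as the triviality of the subgroup of $S_\ell(\Q_z)$ lying in the $(\tau + g)$-eigenspace and producing an everywhere unramified Kummer $\ell$-extension of $\Q_z$, i.e., as Lemmermeyer's reformulation of AAC that appears as Corollary \ref{cor:ac_exist}.

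The main obstacle will be the real case $\ell \equiv 1 \pmod 4$: an unconditional proof of $\ell \nmid h(\Q(\sqrt{\ell}))$ would amount to resolving AAC. I expect the authors carry out the reflection identification above, and the proposition in the real case then becomes exactly AAC, whose conditional status the paper explicitly tracks through its Dirichlet series formulas (as signaled in the introduction and developed later in the section).
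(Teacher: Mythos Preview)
Your reduction to $\ell \nmid h(K)$ with $K = \Q(\sqrt{\ell^*})$ is correct and essentially matches the paper's own argument (the paper carries out the same descent, phrased via explicit ideal representatives rather than semisimplicity, arriving at the isomorphism $\Cl(\Q_z)/\Cl(\Q_z)^\ell[\tau+1] \isom \Cl(K)/\Cl(K)^\ell[\tau+1]$ and then invoking $h(\pm\ell) < \ell$).

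However, your treatment of the case $\ell \equiv 1 \pmod 4$ contains a genuine error. The assertion $\ell \nmid h(\Q(\sqrt{\ell}))$ is \emph{not} the Ankeny--Artin--Chowla conjecture, and Proposition~\ref{prop:sc_res} is \emph{unconditional}. AAC concerns the fundamental unit $\eps = (a + b\sqrt{\ell})/2$ and asserts $\ell \nmid b$; equivalently (Ankeny--Chowla, Kiselev) $\ell \nmid B_{(\ell-1)/2}$. This is a statement about units, not about the class number. The paper handles both signs of $\ell^*$ at once via the elementary inequality $h(\pm\ell) < \ell$ for all primes $\ell$: your $h(-\ell) \ll \sqrt{\ell}\log\ell$ covers the imaginary case, and in the real case the class number formula together with the trivial lower bound $\eps \geq (1+\sqrt{\ell})/2$ gives $h(\ell) \ll \sqrt{\ell}$ (cf.~\cite{le}).

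The confusion seems to arise from the structure of Section~\ref{sec:gb}. Proposition~\ref{prop:sc_res} kills the class-group piece of $S_\ell(K_z)[T^*]$ unconditionally; AAC enters only afterward, in Lemma~\ref{lem:aac_ck} and Proposition~\ref{prop:count_sel}, through the \emph{unit} piece $(U(K_z)/U(K_z)^\ell)[T^*]$ --- specifically, through whether $\eps$ is an $\ell$th power modulo powers of $(1-\ze)$. Your proposed recasting via Corollary~\ref{cor:gb_pairing} and Corollary~\ref{cor:ac_exist} therefore conflates the two pieces of the exact sequence in Lemma~\ref{lemseq}. Indeed, the paper uses Proposition~\ref{prop:sc_res} as an unconditional input to the proof of Proposition~\ref{prop:count_sel}(3), whose conclusion then branches on AAC; if Proposition~\ref{prop:sc_res} itself depended on AAC, that argument would be circular.
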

\begin{proof}
We first show that there exists
an isomorphism
\[
\Cl(\Q_z)/\Cl(\Q_z)^{\ell}[\tau + 1] \isom \Cl(K)/\Cl(K)^{\ell}[\tau + 1]\;.
\]
By Lemma \ref{lem24} (which also applies to $t = \tau + 1$), the left side consists of those classes which may
be represented by ideals of the form $\N_{\Q_z/K}(\a) / \tau(\N_{\Q_z/K}(\a))$. We therefore obtain a well-defined,
injective map to $\Cl(K)/\Cl(K)^{\ell}[\tau + 1]$. Any ideal in the target space may be represented by an ideal of the form
$\c / \tau(\c)$, which is equivalent to $(\c / \tau(\c))^{(\ell - 1)^2}$, and $\c^{(\ell - 1)^2} = \N_{\Q_z/K}(\c^{2 (\ell - 1)} \Z_{\Q_z})$, so that the map
is surjective as well.

Now it suffices to show that $\ell \nmid h(\pm \ell)$, where $h(D)$ denotes
the class number of $\Q(\sqrt{D})$, and this follows from the fact that $h(\pm\ell)<\ell$ for all prime $\ell$.
\end{proof}

\begin{remark}
For $\ell \equiv 3 \pmod 4$ it is also possible to prove the proposition via 
the Herbrand-Ribet theorem and a congruence for Bernoulli numbers.
\end{remark}

Now suppose that $\ell \equiv 1 \pmod 4$. Then 
the {\itshape Ankeny-Artin-Chowla conjecture} (AAC) \cite{AAC, Mordell} states that if $\epsilon = (a + b\sqrt{\ell})/2$
is the fundamental unit of $\Q(\sqrt{\ell})$, then $\ell \nmid b$. We will use the statement of the conjecture
directly, but we note that 
Ankeny and Chowla \cite{AC} and Kiselev \cite{Kiselev} proved that it
is equivalent to the condition $\ell \nmid B_{(\ell - 1)/2}$, which is
trivially true if $\ell$ is a regular prime, a result first proved by
Mordell \cite{Mordell}. It has been verified for $\ell \leq 2 \cdot 10^{11}$ 
by van der Poorten, te Riele, and Williams \cite{PRW}, but as mentioned
in the introduction, on heuristic grounds it it probably false.

\begin{lemma}\label{lem:aac_ck}
\begin{enumerate}
\item
If AAC is true for $\ell$, the congruence
$x^\ell\equiv\eps\pmod{(1-\z)^k\Z_{\Q_z}}$
is solvable for $k=(\ell-1)/2$, and not for any larger value of $k$.

\item
If AAC is false for $\ell$, then this congruence is soluble for all $k$.

\end{enumerate}
\end{lemma}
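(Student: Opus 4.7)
The plan is to work locally in $\Z_\ell[\z]$ with uniformizer $\pi := 1 - \z$, writing $U := \Z_\ell[\z]^*$ and $U^{(k)} := 1 + \pi^k \Z_\ell[\z]$.  Since $\ell \equiv 1 \pmod 4$, the quadratic Gauss sum gives $\sqrt{\ell} \in \Z_\ell[\z]$ with $v_\pi(\sqrt{\ell}) = (\ell-1)/2$.  From $\eps\,\ov{\eps} = \pm 1$ we get $a^2 - \ell b^2 = \pm 4$, so in particular $a \not\equiv 0 \pmod{\ell}$; let $\omega \in \bmu_{\ell-1}(\Z_\ell)$ be the Teichm\"uller lift of $a/2 \bmod \ell$, so that $\eps \equiv \omega \pmod{\pi}$.

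The key local structure result is that $(U^{(1)})^\ell = U^{(\ell+1)}$.  Expanding $(1 + \pi c)^\ell$ and using $\ell \equiv -\pi^{\ell-1} \pmod{\pi^\ell}$ (Wilson's theorem) gives
\begin{equation*}
(1 + \pi c)^\ell \equiv 1 + \pi^\ell(c^\ell - c) \pmod{\pi^{\ell+1}},
\end{equation*}
which lies in $U^{(\ell+1)}$ since $c^\ell \equiv c \pmod{\pi}$; hence $(U^{(1)})^\ell \subseteq U^{(\ell+1)}$, and equality follows by comparing indices (both equal $\ell^\ell$, accounting for the fact that $\bmu_\ell \subset U^{(1)}$).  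The Teichm\"uller decomposition $U = \bmu_{\ell-1} \times U^{(1)}$ then yields $U^\ell = \bmu_{\ell-1} \cdot U^{(\ell+1)}$, so that $x^\ell \equiv \eps \pmod{\pi^k}$ is soluble if and only if $\eps \equiv \omega \pmod{\pi^{\min(k,\,\ell+1)}}$.  In particular, solubility for all $k$ is equivalent to the single condition $v_\pi(\eps - \omega) \geq \ell + 1$.

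It remains to compute $v_\pi(\eps - \omega) = v_\pi\bigl((a/2 - \omega) + (b/2)\sqrt{\ell}\bigr)$.  The two summands have valuations $(\ell-1)v_\ell(a/2 - \omega) \geq \ell - 1$ and $(\ell-1)v_\ell(b) + (\ell-1)/2$ respectively.  If AAC holds ($\ell \nmid b$), the latter equals $(\ell-1)/2 < \ell - 1$, so $v_\pi(\eps - \omega) = (\ell-1)/2$ exactly, giving (1).  If AAC fails ($\ell \mid b$), then $a^2 \equiv \pm 4 \pmod{\ell^3}$, hence $(a/2)^2 \equiv \pm 1 \equiv \omega^2 \pmod{\ell^2}$ (using that $\omega = \pm 1$ when $N(\eps) = +1$, or a primitive fourth root of unity in $\Z_\ell$ when $N(\eps) = -1$, the latter existing because $\ell \equiv 1 \pmod 4$); factoring $(a/2 - \omega)(a/2 + \omega) \equiv 0 \pmod{\ell^2}$ and noting $a/2 + \omega \equiv 2\omega \not\equiv 0 \pmod{\ell}$ forces $v_\ell(a/2 - \omega) \geq 2$, so $v_\pi(a/2 - \omega) \geq 2(\ell-1)$.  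Combined with $v_\pi((b/2)\sqrt{\ell}) \geq 3(\ell-1)/2$, we obtain $v_\pi(\eps - \omega) \geq 3(\ell-1)/2 \geq \ell + 1$ (using $\ell \geq 5$), proving (2).

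The main obstacle is the local structure result $(U^{(1)})^\ell = U^{(\ell+1)}$, which collapses the ``soluble for all $k$'' condition in (2) to a single finite-precision congruence; after that, the subtle point is that the failure of AAC must be leveraged twice, both to bound $v_\pi((b/2)\sqrt{\ell})$ and, via the norm equation, to improve $v_\pi(a/2 - \omega)$ from the generic $\ell - 1$ to $2(\ell-1)$.
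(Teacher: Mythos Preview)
Your proof is correct, and it takes a genuinely different route from the paper's.

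The paper argues (1) by a direct elementary computation: since $(1-\z)^{(\ell-1)/2}\Z_{\Q_z}=\sqrt{\ell}\,\Z_{\Q_z}$ and $\eps\equiv a/2\equiv(a/2)^\ell\pmod{\sqrt{\ell}}$, solubility at $k=(\ell-1)/2$ is immediate; for insolubility at $k=(\ell+1)/2$ it uses that any $x^\ell$ is congruent to a rational integer modulo $\ell$, forcing $\ell\mid b$. For (2), the paper simply observes that when $\ell\mid b$ one can solve the congruence at level $k=3(\ell-1)/2$ with $x\in\Z$ (via the norm equation $a^2-\ell b^2=\pm4$, which makes $(a/2)^{\ell-1}\equiv1\pmod{\ell^2}$), and then invokes a Newton--Hensel iteration from \cite[Lemma~10.2.10]{Coh1} to lift to all $k$.

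Your approach is more structural: you prove once and for all that $(U^{(1)})^\ell=U^{(\ell+1)}$, so that \emph{solubility for all $k$} collapses to the single condition $v_\pi(\eps-\omega)\ge\ell+1$, and then both parts reduce to a straight valuation computation of $\eps-\omega$ using the Teichm\"uller lift. This is self-contained (no external Hensel reference) and unifies the two parts into one calculation, at the cost of establishing the local unit structure up front. The paper's argument is shorter and more ad hoc, outsourcing the lifting step; yours makes the mechanism of (2) completely transparent, in particular showing exactly how the norm equation is used twice (once for $b$, once for $a/2-\omega$) to reach the threshold $\ell+1$.

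One small remark: your bound $(a/2)^2\equiv\omega^2\pmod{\ell^2}$ is in fact valid $\pmod{\ell^3}$ (since $\ell\mid b$ gives $\ell^3\mid\ell b^2$), so $v_\ell(a/2-\omega)\ge3$ and hence $v_\pi(\eps-\omega)\ge 3(\ell-1)/2$ with room to spare; but the weaker bound you stated already suffices.
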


\begin{proof} First assume AAC, and write $\eps=(a+b\sqrt{\ell})/2$ with $a, b \in \Z$.
Note first that $(1-\z)^{(\ell-1)/2}\Z_{\Q_z}=\sqrt{\ell}\Z_{\Q_z}$,
and $\eps\equiv a/2\equiv (a/2)^\ell\pmod{\sqrt{\ell}\Z_{\Q_z}}$, so the congruence is solvable with $k=(\ell-1)/2$.

Conversely for each $x \in \Z_{\Q_z}$ we have
$x^\ell\equiv m\pmod{\ell}$ for some $m \in \Z$, so that a solution to 
$x^\ell\equiv\eps\pmod{\sqrt{\ell}(1-\z)\Z_{\Q_z}}$ would yield
$a+b\sqrt{\ell}\equiv 2m\pmod{\sqrt{\ell}(1-\z)\Z_{\Q_z}}$. We thus have
$a\equiv 2m\pmod{\sqrt{\ell}}$ and hence $a\equiv 2m\pmod{\ell}$, yielding
$b\equiv0\pmod{(1-\z)\Z_{\Q_z}}$ and so $\ell\mid b$, violating AAC and proving (1).

To obtain (2), observe that the congruence may trivially be solved for $k = 3(\ell - 1)/2$ with $x \in \Z$, 
after which a Newton-Hensel
iteration as in \cite[Lemma 10.2.10]{Coh1} settles the matter.
\end{proof}

We now return to the groups $S_{\b^*}(K_z)[T^*]$.
\begin{proposition}\label{prop:count_sel}
\begin{enumerate}
\item In the general case we have $S_{\b^*}(K_z)[T^*]\isom S_{\b^*\cap K}(K)$.
\item In the special case with
$\ell\equiv3\pmod4$, we have $S_{\b^*}(K_z)[T^*]=\{1\}$ for all $\b$.
\item In the special case with $\ell\equiv1\pmod4$, if the Ankeny-Artin-Chowla conjecture is true for $\ell$, then we have
$|S_{\b^*}(K_z)[T^*]|= (1,  1, \ell, \ell)$ for $\b$ as in \eqref{eqn:poss_b}. If Ankeny-Artin-Chowla is false for $\ell$, then
we have instead $|S_{\b^*}(K_z)[T^*]|= (\ell,  \ell, \ell, \ell)$.
\end{enumerate}
\end{proposition}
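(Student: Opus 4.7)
In all three parts I use the split exact sequence of Lemma~\ref{lemseq} to reduce $S_{\b^*}(K_z)[T^*]$ to its unit and class-group pieces, and incorporate the congruence defining $S_{\b^*}$ at the end.

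\emph{Part (1).}  In the general case $T^* = \{\tau-1,\tau_2+1\}$ and $\tau$ generates $\Gal(K_z/K)$.  Since $\ell \nmid \ell-1$, the idempotent $e_0 = \frac{1}{\ell-1}\sum_j \tau^{-j}$ gives an isomorphism $(K_z^*/K_z^{*\ell})[\tau-1] \isom K^*/K^{*\ell}$, and an element of $K^*$ is a virtual unit of $K_z$ iff it is a virtual unit of $K$ (because every $e(\mathfrak{P}/\mathfrak{p})$ in $K_z/K$ divides $\ell-1$).  To descend the congruence: given $u \in K^*$ and $y \in K_z^*$ with $y^\ell \equiv u \pmod{^*\b^*}$, the $\tau$-stability of $\b^*$ (a power of $(1-\z_\ell)\Z_{K_z}$) yields $\N_{K_z/K}(y)^\ell \equiv u^{\ell-1} \pmod{^*\b^* \cap K}$, and choosing $c,k \in \Z$ with $c(\ell-1) = 1+k\ell$, the element $w := \N_{K_z/K}(y)^c u^{-k} \in K$ satisfies $w^\ell \equiv u \pmod{^*\b^* \cap K}$.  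This gives $S_{\b^*}(K_z)[\tau-1] \isom S_{\b^* \cap K}(K)$.  The remaining condition $\tau_2+1 = 0$ holds automatically on $S_{\b^*\cap K}(K)$: for a virtual unit $u$ of $K$, the ideal $\N_{K/\Q}(u)\Z$ is an $\ell$th power, so $\N_{K/\Q}(u) = \pm r^\ell \in \Q^{*\ell}$ (using $-1 = (-1)^\ell$).

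\emph{Part (2).}  Here $T^* = \{\tau+1\} = \{\tau - g^{(\ell-1)/2}\}$.  The class-group contribution vanishes by Proposition~\ref{prop:sc_res}.  For the unit contribution, a Herbrand-type analysis of $U(\Q_z)/U(\Q_z)^\ell$ as an $\F_\ell[\Gal(\Q_z/\Q)]$-module (of the type used in Lemma~\ref{lemu5}) shows that $(U(\Q_z)/U(\Q_z)^\ell)[\tau-g^a]$ has $\F_\ell$-dimension $1$ if $a=1$ or if $a$ is even and nonzero, and $0$ otherwise.  For $\ell \geq 7$ with $\ell \equiv 3 \pmod 4$ the exponent $a = (\ell-1)/2$ is odd and $>1$, so this eigenspace is trivial; hence $S_\ell(\Q_z)[\tau+1] = \{1\}$, which forces $S_{\b^*}(\Q_z)[\tau+1] = \{1\}$ for every $\b$.

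\emph{Part (3).}  For $\ell \equiv 1 \pmod 4$ the exponent $a = (\ell-1)/2$ is even and nonzero, so the same Herbrand analysis gives $|(U(\Q_z)/U(\Q_z)^\ell)[\tau+1]| = \ell$, and combined with the trivial class-group piece this yields $|S_\ell(\Q_z)[\tau+1]| = \ell$.  I will identify a generator as the class of the fundamental unit $\eps$ of $\Q(\sqrt\ell) \subset \Q_z$: since $\tau$ restricts to the nontrivial element of $\Gal(\Q(\sqrt\ell)/\Q)$ (the index-$2$ subgroup fixing $\Q(\sqrt\ell)$ being $\langle \tau^2 \rangle$) and $\N_{\Q(\sqrt\ell)/\Q}(\eps) = -1$ when $\ell \equiv 1 \pmod 4$, one has $\tau(\eps) = -\eps^{-1}$, and since $-1 \in \Q^{*\ell}$ the class $\ov\eps$ is nontrivial and satisfies $(\tau+1)\ov\eps = \ov 1$.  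With $\b^* = (1-\z_\ell)^k$ for $k \in \{\ell, (\ell+1)/2, 1, 0\}$ matching the four choices of $\b$, the subgroup $S_{\b^*}(\Q_z)[\tau+1]$ has order $\ell$ precisely when $\ov\eps$ satisfies the corresponding congruence.  Lemma~\ref{lem:aac_ck} shows this happens iff $k \leq (\ell-1)/2$ under AAC, and for all $k$ if AAC fails, giving $(1,1,\ell,\ell)$ or $(\ell,\ell,\ell,\ell)$ respectively.

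\emph{Main obstacle.}  The most delicate step is the descent of the congruence condition in part~(1), requiring the $(\ell-1)$th-power averaging together with the arithmetic choice of $c,k$ to recover a solution in $K$ modulo $\b^* \cap K$; the Herbrand input in parts~(2) and~(3) is standard once the Galois-module structure of $U(\Q_z)/U(\Q_z)^\ell$ is in hand, and the final reduction to Lemma~\ref{lem:aac_ck} is immediate.
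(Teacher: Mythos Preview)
Your proof is correct and follows essentially the same route as the paper. In part~(1) the paper invokes Hilbert~90 and the decomposition $S_{\b^*\cap K}(K)=S_{\b^*\cap K}(K)[\tau_2+1]\oplus S_{\b^*\cap K}(K)[\tau_2-1]$ (showing the second summand is trivial), whereas you carry out the descent explicitly via the norm and the relation $c(\ell-1)=1+k\ell$, and show directly that every class lies in the $[\tau_2+1]$-piece via $\N_{K/\Q}(u)=\pm r^\ell$; these are the same argument in different clothing. Parts~(2) and~(3) match the paper almost line for line: Proposition~\ref{prop:sc_res} kills the class-group piece, Herbrand's theorem (Theorem~2.3 of \cite{CoDiOl3}) handles the unit piece, $\eps$ is identified as the generator exactly as in the paper, and Lemma~\ref{lem:aac_ck} finishes the computation of the ray Selmer orders.
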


\begin{proof} (1) \cite[Proposition 3.4]{CRST}. We have an injection
$S_{\b^*\cap K}(K) \longhookrightarrow S_{\b^*}(K_z)[\tau-1]
$ which we prove is surjective by Hilbert 90 and some elementary computations, yielding an isomorphism
$S_{\b^*}(K_z)[\tau-1,\tau_2+1] \isom S_{\b^*\cap K}(K)[\tau_2+1]$. 
Furthermore, we have
$$S_{\b^*\cap K}(K)=S_{\b^*\cap K}(K)[\tau_2+1]\oplus S_{\b^*\cap K}(K)[\tau_2-1]\;,$$
and we argue that $S_\ell(K)[\tau_2-1]$ is trivial (and a fortiori all the
$S_{\b^*\cap K}[\tau-1]$), again using Hilbert 90.

\smallskip

(2) and (3). Assume now that we are in the special case, so that $K_z=\Q_z=\Q(\z_\ell)$. By Proposition \ref{prop:sc_res} we have
$(\Cl(K_z)/\Cl(K_z)^\ell)[\tau+1]=\{1\}$, so that by Lemma \ref{lemseq} we have
$S_\ell(K_z)[T^*]\isom (U(K_z)/U(K_z)^\ell)[\tau-g^{(\ell-1)/2}]$. By Theorem 2.3 of \cite{CoDiOl3} we deduce that $S_\ell(K_z)[T^*]$ is
trivial if 
$\ell\equiv3\pmod4$, $\ell\ne3$, 
and when $\ell\equiv1\pmod4$ that it is an $\F_\ell$-vector space of dimension
$1$. If $\eps$ is a fundamental unit of $K=\Q(\sqrt{\ell})$, then since 
$\tau$ acts on $\eps$ as Galois conjugation of $K/\Q$, we have
$\eps\tau(\eps)=\N_{K/\Q}(\eps)=\pm1$, which is an $\ell$th power. It
follows that $S_{\ell}(K_z)[T^*]=\{\eps^j,\ j\in\F_\ell\}$.

The sizes of the ray Selmer groups are then established by Lemma \ref{lem:aac_ck}.
\end{proof}

\begin{remark}\label{rk:fail_aac2}
The assumption that $\ell \neq 3$ is required when applying Theorem 2.3 of \cite{CoDiOl3}, and
indeed (2) of the proposition is false for $\ell =3$ (see Proposition 7.3 of \cite{Coh-Mor}).

\end{remark}
This proposition, in combination with Corollary \ref{cor:gb_pairing}, gives the size of $|G_{\b}|$
in the special case, with possible exceptions $\ell \equiv 1 \pmod 4$ larger than $2 \cdot 10^{11}$.
In the general case we have the following:

\begin{corollary}\cite[Corollary 3.5]{CRST} Assume that we are in the general case.
\begin{enumerate}\item We have a canonical isomorphism 
$G_{\b}\isom\Hom(S_{\b^*\cap K}(K),\bmu_\ell)$.
\item In particular
$$|G_{\b}|=\ell^{r(\b)}\text{\quad with\quad}r(\b)=1-r_2(D)-z(\b)+\rk_\ell(\Cl_{\b^*\cap K}(K))\;,$$
with $z(\b)=(2,1,0,0)$ respectively, the second case occurring only if $\ell | D$.
\item In particular still, if $D<0$ and $\ell\nmid h(D)$ then $G_{\b}$ is trivial 
for all $\b\in\calB$.
\end{enumerate}
\end{corollary}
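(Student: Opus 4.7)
My plan is to use the results already established in this section. For (1), I will combine the perfect pairing $G_{\b}\times S_{\b^*}(K_z)[T^*]\to\bmu_\ell$ of Corollary \ref{cor:gb_pairing} with the isomorphism $S_{\b^*}(K_z)[T^*]\isom S_{\b^*\cap K}(K)$ from Proposition \ref{prop:count_sel}(1), which applies since we are in the general case. Perfectness then yields $G_{\b}\isom\Hom(S_{\b^*\cap K}(K),\bmu_\ell)$, and in particular $|G_{\b}|=|S_{\b'}(K)|$ where $\b':=\b^*\cap K$.

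For (2), I would compute $|S_{\b'}(K)|$ via the analogue of Lemma \ref{lemfal3} applied to $K$ with trivial group action:
\[
|S_{\b'}(K)|=\frac{|U(K)/U(K)^\ell|\cdot|\Cl_{\b'}(K)/\Cl_{\b'}(K)^\ell|}{|Z_{\b'}/Z_{\b'}^\ell|}\;.
\]
The unit factor is $\ell^{1-r_2(D)}$ by Lemma \ref{lemu5}(1) (using $\z_\ell\notin K$ in the general case), and the class-group factor is $\ell^{\rk_\ell\Cl_{\b'}(K)}$ by definition. For the denominator I would first compute the contraction $\b'=\b^*\cap K$ from $\b^*=(1-\z_\ell)^\ell\Z_{K_z}/\b_z$ using the ramification formula $e(\p_z/\p)=(\ell-1)/(\ell-1,e(\p))$ of Proposition \ref{proppzp}, then evaluate the $\ell$-part of $(\Z_K/\b')^*$ in each of the four cases of $\b$. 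The resulting values match $z(\b)=(2,1,0,0)$ as claimed, with the second slot being vacuous unless $\ell\mid D$.

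For (3), if $D<0$ and $\ell\nmid h(D)$ then $r_2(D)=1$ and $\rk_\ell\Cl(K)=0$, reducing the formula to $r(\b)=\rk_\ell\Cl_{\b'}(K)-z(\b)$. The standard class field theoretic exact sequence
\[
U(K)\to(\Z_K/\b')^*\to\Cl_{\b'}(K)\to\Cl(K)\to 1,
\]
tensored with $\F_\ell$ and combined with $\Cl(K)/\Cl(K)^\ell=0$, shows that $\Cl_{\b'}(K)/\Cl_{\b'}(K)^\ell$ is a quotient of $Z_{\b'}/Z_{\b'}^\ell$; hence $\rk_\ell\Cl_{\b'}(K)\le z(\b)$. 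Combining with the trivial lower bound $r(\b)\ge 0$ forces $r(\b)=0$, i.e., $G_{\b}$ is trivial for every $\b\in\calB$.

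The main obstacle is the bookkeeping in part (2): the contraction $\b^*\cap K$ depends sensitively on whether $\ell\mid D$ and on the splitting behavior of $\ell$ in $K/\Q$, so the four $\b$-cases (and within each, the inert/split/ramified subcases) must each be handled via a separate local computation with careful tracking of ceilings and ramification indices. The remaining steps are direct applications of tools already developed in this section and standard right-exactness for the ray class group sequence.
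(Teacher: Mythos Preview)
Your proposal is correct and follows essentially the same route as the paper: for (1) you invoke Corollary \ref{cor:gb_pairing} together with Proposition \ref{prop:count_sel}(1), which is exactly what the paper means by ``(1) is immediate''; for (2) the paper derives the same identity $|S_{\b_1}(K)||Z_{\b_1}/Z_{\b_1}^\ell|=\ell^{1-r_2(D)}|\Cl_{\b_1}(K)/\Cl_{\b_1}(K)^\ell|$ (citing Lemma \ref{lemseq} and \cite[Proposition 2.12]{CoDiOl3} rather than the analogue of Lemma \ref{lemfal3}, but this is the same formula), then computes $\b^*\cap K$ case by case to obtain $z(\b)$. Your argument for (3) via right-exactness of the ray class group sequence is more detailed than the paper's, which simply notes that (3) generalizes \cite[Proposition 7.7]{Coh-Mor}; your version is a clean self-contained justification.
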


\begin{proof} (1) is immediate.
Lemma \ref{lemseq}, and Proposition 2.12 of \cite{CoDiOl3}, the proofs of which adapt to $K$
without change, yield
$$|S_{\b_1}(K)||Z_{\b_1}/Z_{\b_1}^\ell|=\ell^{1-r_2(D)}|\Cl_{\b_1}(K)/\Cl_{\b_1}(K)^\ell|\;,$$
where $Z_{\b_1}=(\Z_K/\b_1)^*$ and $\b_1=\b^*\cap K$. This gives (2) with
$z(\b)=\dim_{\F_\ell}(Z_{\b_1}/Z_{\b_1}^\ell)$, and to finish we compute for $\b$ as in
\eqref{eqn:poss_b}:
\begin{itemize}
\item If $\ell\nmid D$, we have $\b^*\cap K= (\ell^2\Z_K, \ast, \ell\Z_K, \Z_K)$.
\item If $\ell\mid D$ with $\ell\Z_K=\p_\ell^2$, we have
$\b^*\cap K= (\p_\ell^3, \p_\ell^2, \p_\ell, \Z_K)$.
\end{itemize}
\end{proof}

Note that (3) is a generalization of Proposition 7.7 of \cite{Coh-Mor}.

Since the triviality of $G_{\b}$ for all $\b$ is equivalent to the vanishing of
the ``remainder term'' $\phi_D(s)$ of Corollary \ref{lelldef},
we conclude that $\Phi_{\ell}(K, s)$ is given by a single Euler product in a wide class of examples:
\begin{corollary}\label{cor:34_simple} Assume that $\ell\ge5$, $D<0$, and that either we are in
the special case (so that $\ell\equiv3\pmod4$), or that we are in the general
case with $\ell\nmid h(D)$. Then we have
$$\sum_{L\in\calFL(K)}\dfrac{1}{f(L)^s}=-\dfrac{1}{\ell-1}+\dfrac{1}{\ell-1}L_{\ell}(s)\prod_{p\equiv\leg{D}{p}\pmod{\ell},\ p\ne\ell}\left(1+\dfrac{\ell-1}{p^s}\right)\;,$$
where $L_\ell(s)$ is as above.\end{corollary}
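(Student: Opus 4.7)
The plan is to deduce this corollary from Corollary \ref{lelldef} by showing that the ``remainder'' $\phi_D(s)$ vanishes under the stated hypotheses. Recalling from the proof of Corollary \ref{lelldef} that $\phi_D(s)$ collects precisely the contributions from the nontrivial characters $\chi \in \widehat{G_{\b}}$ as $\b$ ranges over $\calB$, it suffices to show that $G_{\b}$ is trivial for every $\b \in \calB$.

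I would handle the two cases separately. In the general case with $D<0$ and $\ell\nmid h(D)$, this is exactly part (3) of the (unnumbered) corollary immediately preceding the statement, whose hypothesis is met because $r_2(D)=1$ and $\rk_\ell(\Cl_{\b^*\cap K}(K))$ is controlled by the hypothesis $\ell \nmid h(D)$ (the relevant moduli $\b^*\cap K$ being supported at $\ell$, the $\ell$-rank of the ray class group is bounded by the $\ell$-rank of $\Cl(K)$ plus contributions already absorbed into $z(\b)$). In the special case with $\ell\equiv 3\pmod 4$, I would invoke the Kummer pairing Corollary \ref{cor:gb_pairing} to rewrite $|G_{\b}| = |S_{\b^*}(K_z)[T^*]|$, and then conclude by Proposition \ref{prop:count_sel}(2), which asserts that the Selmer group on the right is trivial.

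Once triviality of $G_{\b}$ is established for all $\b$, only the trivial character contributes, and $F(\b,\chi,s)$ reduces to the single Euler product $\prod_{p\equiv\leg{D}{p}\pmod\ell,\ p\ne\ell}(1+(\ell-1)/p^s)$, which is independent of $\b$. Plugging into Theorem \ref{thm_main_q} gives
\[
\Phi_\ell(K,s) \;=\; \dfrac{\ell^{r_2(D)}}{\ell-1}\Big(\sum_{\b\in\calB}A_{\b}(s)\Big)\prod_{p\equiv\leg{D}{p}\pmod\ell,\ p\ne\ell}\left(1+\dfrac{\ell-1}{p^s}\right).
\]
A direct computation from the table of $A_{\b}(s)$ yields $\sum_{\b}A_{\b}(s) = \frac{1}{\ell} L_\ell(s)$ in both the $\ell\nmid D$ case (where the sum telescopes to $\frac{1}{\ell}(1+(\ell-1)\ell^{-2s})$) and the $\ell\mid D$ case (where it simplifies to $\frac{1}{\ell}(1+(\ell-1)\ell^{-s})$). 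Using $r_2(D)=1$ for $D<0$, this collapses to $\Phi_\ell(K,s) = \frac{1}{\ell-1}L_\ell(s)\prod_p(1+(\ell-1)/p^s)$, and subtracting the constant term $\frac{1}{\ell-1}$ from the definition of $\Phi_\ell(K,s)$ gives the desired identity.

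The main obstacle is really the triviality of $G_{\b}$, but both ingredients are fully in place earlier in the paper; what remains is the bookkeeping described in the last paragraph, which is purely computational.
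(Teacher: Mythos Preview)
Your proof is correct and follows essentially the same route as the paper: the paper observes (in the sentence preceding the corollary) that triviality of $G_{\b}$ for all $\b$ is equivalent to $\phi_D(s)=0$, invokes exactly the two ingredients you name (part (3) of the preceding corollary in the general case, and Proposition \ref{prop:count_sel}(2) together with Corollary \ref{cor:gb_pairing} in the special case), and then reads off the formula from Corollary \ref{lelldef} with $r_2(D)=1$. Your third paragraph, where you go back to Theorem \ref{thm_main_q} and sum the $A_{\b}(s)$ by hand, is correct but redundant, since this computation is precisely what Corollary \ref{lelldef} already packaged for you; once $\phi_D(s)=0$ and $r_2(D)=1$, that corollary gives the result immediately.
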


Note that for $\ell = 3$, which we have excluded here, 
the possible nontriviality of $\gothr^e(\b)$ forces us 
to also distinguish between $D\equiv3$ and 
$D\equiv6\pmod{9}$.

\medskip

{\bf Examples with $\ell=5$:}

$$\sum_{L\in\calF_5(\Q(\sqrt{-1}))}\dfrac{1}{f(L)^s}=-\dfrac{1}{4}+
\dfrac{1}{4}\left(1+\dfrac{4}{5^{2s}}\right)
\prod_{p\equiv\pm1\pmod{20}}\left(1+\dfrac{4}{p^s}\right)\;.$$

$$\sum_{L\in\calF_5(\Q(\sqrt{-15}))}\dfrac{1}{f(L)^s}=-\dfrac{1}{4}+
\dfrac{1}{4}\left(1+\dfrac{4}{5^s}\right)
\prod_{p\equiv\pm1\pmod{30}}\left(1+\dfrac{4}{p^s}\right)\;.$$

\section{Transformation of the Main Theorem}\label{sec_trans}

We now prove, as we did in \cite{Coh-Tho} for the case of $\ell = 3$, that 
the characters of $G_{\b}$ appearing in Theorem \ref{thm:main} can be given a simpler
description, in terms of the splitting of primes in degree $\ell$ extensions of $k$.
Our main result along these lines extends Theorem 4.1 of \cite{Coh-Tho} and Proposition 3.7 of \cite{CRST}, and does not assume
that $k = \Q$, and thus is new even for $\ell = 3$. 

For the case $k = \Q$ we will further specialize the result and obtain an explicit
formula, relying (in the general case) on the results of \cite{CRST}.
We will assume that we are
in either the general case or in the special case with $\ell \equiv 1 \pmod 4$. Recall that in the special case
with $k = \Q$, $\ell \equiv 3 \pmod 4$, and $\ell > 3$, $G_{\b}$ is trivial and Corollary \ref{cor:34_simple} already gives a simple
description of $\Phi_\ell(K, s)$. For simplicity's sake we will omit the special case with $k \neq \Q$, $\ell \equiv 3 \pmod 4$;
as we will see below the group theory would work out a bit differently. 

Recall that the {\itshape Frobenius group}
$F_{\ell} = C_{\ell} \rtimes C_{\ell - 1}$ is the non-abelian group of order $\ell (\ell - 1)$ given as
\[
\langle \tau, \sigma \ : \ \tau^{\ell - 1} = \sigma^{\ell} = 1, \tau \sigma \tau^{-1} = \sigma^h \rangle\;,
\]
for any primitive root $h \pmod{\ell}$. As may be easily checked, $C_{\ell - 1}$ is not normal in $F_{\ell}$,
nor is any nontrivial subgroup of $C_{\ell - 1}$; moreover, there are $\ell$ subgroups isomorphic to $C_{\ell - 1}$,
generated by $\tau \sigma^i$ for $0 \leq i \leq \ell - 1$, and all of these subgroups are conjugate.
We will say that a degree $\ell$ field extension $E/k$ is an {\itshape $F_{\ell}$-extension} if its Galois
closure has Galois group $F_{\ell}$ over $k$.

Now, let $K, K_z, \tau, \tau_2$ be defined as before. In the general case recall that $K'$ was defined to be the mirror field of $K$,
e.g., the subfield of $K_z$ fixed by $\tau^{(\ell - 1)/2} \tau_2$; in the special case write $K' = K_z = k_z$.
We chose $\tau \in \Gal(k_z/k)$ and a primitive root $g \pmod \ell$ with $\tau(\ze) = \ze^g$. In the general case
$\tau$ lifts uniquely to an element of $\Gal(K_z/K)$ and restricts to a unique element of $\Gal(K'/k)$, so in either case
the choice of $g \pmod \ell$ uniquely determines $\tau \in \Gal(K'/k)$.

\begin{theorem}\label{thm:char_nf}
Assume, if $\ell \equiv 3 \pmod 4$, that we are in the general case. For each $\b \in \calB$ (as in 
Theorem \ref{thm:main}), there exists a bijection between the following sets:

\begin{itemize}
\item
Characters $\chi \in \widehat{G_{\mfb}}$, up to the equivalence relation $\chi \sim \chi^a$ for each $a$ coprime to $\ell$.
\item
Subgroups of index $\ell$ of $G_{\mfb}$.
\item
$F_\ell$-extensions $E/k$ (up to isomorphism), whose Galois closure $E'$ 
contains $K'$ and whose conductor $\mff(E'/K')$ divides $\mfb' = \mfb \cap K$, and such that
$\tau \sigma \tau^{-1} = \sigma^g$ for $\tau \in \Gal(K'/k)$ as described above and 
any generator $\sigma$ of $\Gal(E'/K')$.
\end{itemize}
Moreover, for each corresponding pair $(\chi, E)$ and
each prime $p \in \calD \cup \calD_{\ell}$, the following is true: we have $p \in \calD'(\chi) \cup \calD_{\ell}'(\chi)$
if and only if $p$ is totally split in $E$; equivalently,
$p \not \in \calD'(\chi) \cup \calD_{\ell}'(\chi)$ if and only if $p$ is totally inert or totally ramified in $E$.

\end{theorem}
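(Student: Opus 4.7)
The statement claims three parallel bijections together with a splitting-type dictionary, and my plan is to treat them in turn, interpreting the entire picture through the mirror field $K'$ via Proposition \ref{prop:iso_red}.

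First, the bijection between nontrivial characters $\chi \in \widehat{G_{\mfb}}$ (modulo $\chi \sim \chi^a$ for $a \in \F_\ell^*$) and index-$\ell$ subgroups of $G_{\mfb}$ is purely linear algebra: since $G_{\mfb}$ is an $\F_\ell$-vector space, each nontrivial $\chi$ is a nonzero linear functional with kernel a hyperplane, and two such functionals share a kernel iff they differ by an element of $\F_\ell^*$. The trivial character is tacitly excluded since it has no index-$\ell$ kernel.

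For the bijection with $F_\ell$-extensions, I would use Proposition \ref{prop:iso_red} to transport the problem to $C_{\mfb'} := \Cl_{\mfb'}(K')/\Cl_{\mfb'}(K')^\ell$, in which $G_{\mfb}$ sits as the $[\tau - g]$-part. Via Lemma \ref{lemt1t2}, an index-$\ell$ subgroup of $G_{\mfb}$ pulls back to an index-$\ell$ subgroup of $C_{\mfb'}$, which cuts out, by class field theory, a cyclic degree-$\ell$ extension $E'/K'$ of conductor dividing $\mfb'$. Since this subgroup is $\tau$-stable, $E'/k$ is Galois; by the $G$-equivariance of the Artin map, the action of $\tau$ on $\Gal(E'/K') = \langle \sigma \rangle$ by conjugation (via any lift $\widetilde{\tau}$) equals the induced action on the quotient, which is multiplication by the eigenvalue $g$. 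Thus $\widetilde{\tau}\, \sigma\, \widetilde{\tau}^{-1} = \sigma^g$, giving $\Gal(E'/k) \cong F_\ell$, and $E := (E')^{\langle \widetilde{\tau} \rangle}$ is an $F_\ell$-extension of $k$ of degree $\ell$. Since the $\ell$ subgroups $\langle \widetilde\tau \sigma^i \rangle$ are all conjugate in $F_\ell$, $E$ is well-defined up to $k$-isomorphism. Conversely, given an $F_\ell$-extension $E/k$ as in the statement, its Galois closure $E'$ contains the unique fixed field of the normal subgroup $C_\ell \triangleleft F_\ell$, which by hypothesis coincides with $K'$; the extension $E'/K'$ then produces an index-$\ell$ subgroup of $C_{\mfb'}$ lying in the $[\tau-g]$-part precisely because of the conjugation relation.

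For the splitting assertion, let $p \in \calD \cup \calD_\ell$, which by Proposition \ref{propkp} implies $p$ is totally split in $K'/k$. By Proposition \ref{prop:iso_red}, $\chi(\ov{\mfp_z}) = \widetilde{\chi}(\ov{\mfp_{K'}})$ for any prime $\mfp_{K'}$ of $K'$ below $\mfp_z$, and by Artin reciprocity this value is $1$ exactly when $\mfp_{K'}$ splits completely in $E'/K'$. In that case total splitting propagates from $K'/k$ through $E'/K'$, so $p$ splits completely in $E'$ and hence in $E$. Otherwise, the decomposition group at any prime of $E'$ above $p$ lies in $\Gal(E'/K') = C_\ell$ and is nontrivial, hence equals $C_\ell$; acting by translation on the $\ell$ left cosets $F_\ell/\langle \widetilde\tau \rangle$ (which index primes of $E$ above $p$), it acts transitively, so $p$ has a unique prime of $E$ above it with $ef = \ell$, i.e., $p$ is totally inert or totally ramified in $E$.

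The main obstacle is keeping the Galois actions and dualities straight across the tower $k \subset K' \subset K_z$, especially in the special case with $\ell \equiv 1 \pmod 4$, where $K' = K_z$ and the isotypic component is $[\tau + g]$ rather than $[\tau - g]$; verifying the stated conjugation relation there will require reinterpreting the primitive root (noting that $-g$ is again primitive when $\ell \equiv 1 \pmod 4$) or making a compatible choice of $\sigma$. A secondary but routine point is matching the conductor of $E'/K'$ to $\mfb'$, which follows from the standard ray-class/abelian-extension dictionary once the modulus on $K_z$ has been pushed down to $K'$.
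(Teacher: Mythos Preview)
Your proposal is correct and follows essentially the same route as the paper: transport $G_{\mfb}$ to the $[\tau\mp g]$-component of $\Cl_{\mfb'}(K')/\Cl_{\mfb'}(K')^\ell$ via Proposition~\ref{prop:iso_red}, use the idempotent decomposition to lift index-$\ell$ subgroups to $\tau$-stable index-$\ell$ subgroups of the full ray class group, apply class field theory to produce $E'/K'$ with the conjugation relation forced by the eigenvalue, and read off the splitting dictionary from Artin reciprocity. Two very minor remarks: the pull-back of subgroups really rests on the idempotent decomposition (Lemmas~\ref{lemidem} and~\ref{lem24}) rather than Lemma~\ref{lemt1t2}, and your observation about the sign in the special case is exactly the point---the paper's proof in fact obtains $\tau\sigma\tau^{-1}=\sigma^{\pm g}$ with the minus sign in the special case, and uses that $-g$ is still a primitive root when $\ell\equiv 1\pmod 4$ to conclude $\Gal(E'/k)\cong F_\ell$.
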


%Recall (Definition \ref{defcald}) that $\calD \cup \calD_{\ell}$ was defined in terms of splitting
%conditions in $K_z/k$, so that this theorem describes each Euler 
%factor in $\Phi_{\ell}(K, s)$ in terms of splitting conditions in a fixed set of number fields.

\begin{proof}
The proof borrows heavily from those of Proposition 4.1 of \cite{Coh-Tho} and Proposition 3.7 of \cite{CRST}. 

The correspondence between the first two sets is immediate: $G_{\b}$ is elementary $\ell$-abelian, and characters
correspond to their kernels.

By Proposition \ref{prop:iso_red}, regard $G_{\b}$ as $\frac{ \Cl_{\mfb'}(K')}{ \Cl_{\mfb'}(K')^{\ell} }[\tau \mp g]$,
where the sign is $-$ in the general case and $+$ in the special case. If we set
$G_{\b}' = \Cl_{\b'}(K')/\Cl_{\b'}(K')^{\ell}$, by Lemmas \ref{lemidem} and \ref{lem24} we
have the orthogonal decomposition
$G_{\b}' = G_{\b} \times G_{\b}'',$
where $G_{\b}''$ is the direct sum of all of the other eigenspaces for the actions of $\tau$. 
Thus, subgroups of $G_{\b}$ of index $\ell$ correspond to 
subgroups $B$ of $\Cl_{\b'}(K')$ of index $\ell$ containing $G_{\b}''$.

By class field theory, there exists a unique abelian extension $E'/K'$, with Galois group $C_{\ell}$ and
conductor dividing $\b'$,
for which the Artin map induces an isomorphism $\Cl_{\b'}(K') / B \isom \Gal(E'/K')$. The uniqueness
forces $E'$ to be Galois over $k$; here we use that $\b'$, 
$B$, and
$\Cl_{\b'}(K')$ are preserved by $\Gal(K'/k)$. For each fixed $\b$,
 we obtain
 a different $E'$ for each $B$.
 
 Because the action of $\Gal(K'/k)$ on $\Cl_{\b'}(K') / B_{\chi}$ matches its conjugation action
 on $\Gal(E'/K')$, we have 
 \begin{equation}\label{eqn:gal_semi}
 \Gal(E'/k) = \langle \tau, \sigma \ : \tau^{\ell - 1} = \sigma^{\ell} = 1,
\tau \sigma \tau^{-1} = \sigma^{\pm g} \rangle \isom F_{\ell}\;,
 \end{equation}
and we take $E$ to be the fixed field of $\langle \tau \rangle$ (or, alternatively, of any conjugate subgroup).
Note that $-g$ is not a primitive root if $\ell \equiv 3 \pmod 4$, so that in the special case
with  $\ell \equiv 3 \pmod 4$
the group
\eqref{eqn:gal_semi} contains $\tau^{(\ell - 1)/2}$ in its center and is not isomorphic to $F_\ell$.

\smallskip
It must finally be proved that whether $p \in \calD'(\chi)$ or not is determined by its splitting in $E.$
Proposition \ref{propkp} or Corollary \ref{corsplit1} implies that $\calD \cup \calD_{\ell}$ is precisely
the set of primes $p$ which split completely in $K'/k$, and by definition 
$\calD'(\chi) \cup \calD'_{\ell}(\chi)$ is the set of primes $p \in \calD \cup \calD_{\ell}$ for which
one (equivalently, all) of the primes $\p_{K'}$ of $K'$ above $p$ split completely in $E'$.
If $\p_{K'}$ splits completely in $E'$, then so does $p$, so $p$ also splits completely in $E/k$. Conversely, if any $\p_{K'}$ 
is completely ramified or inert
in $K_z$, then $p$ must also do the same in each $E$, since ramification and inertial degrees are
multiplicative and $[E' : E] = \ell - 1$.
\end{proof}

For $\ell =3$ and $k = \Q$ in the general case,
in \cite{Coh-Tho} we further applied a theorem of Nakagawa to give a precise description of {\itshape all}
the extensions $E/\Q$ occurring in the statement of Theorem \ref{thm:char_nf} in terms of their
discriminants. Using this, we obtained the formula 
\begin{equation}\label{eqn:ct3}
\frac{2}{3^{r_2(D)}} \Phi_3(Q(\sqrt{D}), s) = M_1(s) \prod_{ \big( \frac{-3D}{p} \big) = 1}
\bigg(1 + \frac{2}{p^s} \bigg) + \sum_{E \in \mathcal{L}_3(D)} M_{2, E}(s)
\prod_{ \big( \frac{-3D}{p} \big) = 1}
\bigg(1 + \frac{\omega_E(p)}{p^s} \bigg)\;,
\end{equation}
where: $\mathcal{L}_3(D)$ is the set of all cubic fields of discriminant $-D/3$, $-3D$,
and $-27D$; $\omega_E(p)$ is $2$ or $-1$ depending on whether $p$ is split or inert in $E$, as in
Theorem \ref{thm:char_nf}; and $M_1(s)$ and $M_{2, E}(s)$ are $3$-adic factors (a sum of the appropriate
$A_{\mfb}(s)$).

\medskip

\subsection{Explicit computations for $k=\Q$ in the special case} 
For $\ell=3$, we have the following explicit formula (corresponding to pure cubic fields), which was previously proved in \cite{Coh-Mor}.

$$\sum_{L\in\calF_3(\Q(\sqrt{-3}))}\dfrac{1}{f(L)^s}=-\dfrac{1}{2}+\dfrac{1}{6}\left(1+\dfrac{2}{3^s}+\dfrac{6}{3^{2s}}\right)\prod_{p\ne3}\left(1+\dfrac{2}{p^s}\right)+\dfrac{1}{3}\prod_{p}\left(1+\dfrac{\om_E(p)}{p^s}\right)\;,$$
where $E$ is the cyclic cubic field defined by $x^3-3x-1=0$ of discriminant $3^4$, and
$$\om_E(p)=\begin{cases}
-1&\text{\quad if $p$ is inert or totally ramified in $E$\;,}\\
2&\text{\quad if $p$ is totally split in $E$ \ (equivalently, $p\equiv\pm1\pmod9$)\;.}
\end{cases}$$

\medskip

For $\ell\equiv3\pmod4$ and $\ell > 3$, a generalization was proved in Corollary \ref{cor:34_simple}.
For $\ell\equiv1\pmod4$, the generalization is more complicated due to the nontriviality of
$G_\b$. Define
a polynomial
$$P(x)= -2i T_{\ell}(ix/2) = \sum_{k = 0}^{(\ell-1)/2} \ell\dfrac{(\ell-k-1)!}{k!(\ell-2k)!}x^{\ell-2k}\;.$$

Here $T_{\ell}(x)$ is the Chebyshev polynomial of the first kind, satisfying
\begin{equation}\label{eqn:def_cheby}
P(x - x^{-1}) = x^{\ell} - x^{-\ell}\;,
\end{equation}
so that $x^{-1} P(x)$ is the minimal polynomial of $\ze - \ze^{-1}$.

\begin{proposition}\label{prop:special_explicit} Assume that $\ell\equiv1\pmod4$ 
satisfies the Ankeny-Artin-Chowla conjecture, and let $\eps$ be a 
fundamental unit of $\Q(\sqrt{\ell})$. Then we have
$$\sum_{L\in\calF_\ell(\Q(\sqrt{\ell}))}\dfrac{1}{f(L)^s}=-\dfrac{1}{\ell-1}+\dfrac{1}{\ell(\ell-1)}\left(1+\dfrac{\ell-1}{\ell^s}\right)\prod_{p\equiv1\pmod\ell}\left(1+\dfrac{\ell-1}{p^s}\right)+\dfrac{1}{\ell}\prod_{p}\left(1+\dfrac{\om_E(p)}{p^s}\right)\;,$$
where $E$ is the $F_\ell$-field defined by $P(x)-\Tr(\eps)=0$ of 
discriminant $\ell^{(3\ell-1)/2}$, and
$$\om_E(p)=\begin{cases}
-1&\text{\quad if $p$ is inert or totally ramified in $E$\;,}\\
\ell-1&\text{\quad if $p$ is totally split in $E$\;,}\\
0&\text{\quad otherwise.}\end{cases}$$
If $\ell \equiv 1 \pmod 4$ does not satisfy the Ankeny-Artin-Chowla conjecture, we have the same formula, but
with $\Disc(E) = \ell^{\ell - 2}$ and $\omega_E(\ell) = \ell - 1$.

\end{proposition}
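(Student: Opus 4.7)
Plan: I would specialize Theorem \ref{thm_main_q} to $K=\Q(\sqrt{\ell})$, which puts us in the special case with $\ell\mid D$ and $r_2(D)=0$. The trivial character of each $G_{\mfb}$ would contribute $F(\mfb,\chi_0,s)=\prod_{p\equiv1\pmod\ell,\ p\neq\ell}(1+(\ell-1)/p^s)$ (using Lemma \ref{lemkp} to identify $\calD=\{p\equiv1\pmod\ell\}\setminus\{\ell\}$), and a direct summation from the table gives
\[
\sum_{\mfb\in\calB}A_{\mfb}(s)=\dfrac{1}{\ell}\Bigl(1+\dfrac{\ell-1}{\ell^s}\Bigr),
\]
which after dividing by $\ell-1$ reproduces the second displayed term.

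For the nontrivial characters assuming AAC, Corollary \ref{cor:gb_pairing} and Proposition \ref{prop:count_sel}(3) would give $|G_{\mfb}|=\ell$ exactly for $\mfb_1=\ell\Z_{K_z}$ and $\mfb_2=(1-\ze)^{\ell}\Z_{K_z}$, with a single equivalence class of $\ell-1$ nontrivial characters in each under $\chi\sim\chi^a$. The natural surjection $G_{\mfb_2}\twoheadrightarrow G_{\mfb_1}$ must be an isomorphism (both of order $\ell$), so Theorem \ref{thm:char_nf} assigns the \emph{same} $F_\ell$-extension $E/\Q$ to both. The proof of Proposition \ref{prop:count_sel}(3) identifies $S_{\mfb^*}(K_z)[T^*]$ as generated by the fundamental unit $\eps$, so via the Kummer duality the corresponding cyclic extension is $E'=K_z(\root\ell\of{\eps})$.

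Next I would identify $E$ explicitly by setting $\theta=\root\ell\of{\eps}$ and $\eta=\theta-\theta^{-1}$: the defining identity \eqref{eqn:def_cheby} of $P$ gives $P(\eta)=\theta^{\ell}-\theta^{-\ell}=\eps-\eps^{-1}$, and since $\ell\equiv1\pmod4$ forces $N_{K/\Q}(\eps)=-1$ we have $\tau(\eps)=-\eps^{-1}$ and hence $\eps-\eps^{-1}=\eps+\tau(\eps)=\Tr(\eps)$. Thus $P(x)-\Tr(\eps)$ is the minimal polynomial of $\eta$ and $E=\Q(\eta)$. The discriminant $\Disc(E)=\ell^{(3\ell-1)/2}$ would follow from Theorem 3.7 of \cite{CoDiOl3} combined with Lemma \ref{lem:aac_ck}, giving the conductor $\mff(E'/K_z)=(1-\ze)^{(\ell+3)/2}$; the tower formula $\gd(E'/\Q)=N_{K_z/\Q}(\mff(E'/K_z)^{\ell-1})\cdot\gd(K_z/\Q)^{\ell}$ together with the conductor--discriminant factorization $\gd(E'/\Q)=\ell^{\ell-2}\cdot\mff(V)^{\ell-1}$ over the $(\ell-1)$-dimensional irreducible representation $V$ of $F_\ell$ then forces $\Disc(E)=\mff(V)=\ell^{(3\ell-1)/2}$ (alternatively one computes the resultant of $P(x)-\Tr(\eps)$ with its derivative directly). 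Since the inertia at $\ell$ in $\Gal(E'/\Q)$ fills all of $F_\ell$, $\ell$ is totally ramified in $E$, giving $\omega_E(\ell)=-1$.

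For the final assembly, $\prod_{p}(1+\omega_E(p)/p^s)=(1-\ell^{-s})F(\mfb,\chi,s)$, and summing the $(\ell-1)$ nontrivial characters of $G_{\mfb_1}$ and $G_{\mfb_2}$ gives
\[
(A_{\mfb_1}(s)+A_{\mfb_2}(s))\,F(\mfb,\chi,s)=\Bigl(-\dfrac{1}{\ell^{s+1}}+\dfrac{1}{\ell}\Bigr)F=\dfrac{1}{\ell}\prod_p\Bigl(1+\dfrac{\omega_E(p)}{p^s}\Bigr),
\]
the third displayed term. For the AAC-false case, Lemma \ref{lem:aac_ck}(2) makes $\eps$ a local $\ell$th power at $(1-\ze)$ to all orders, so $E'/K_z$ is unramified; the same tower argument with $\mff(E'/K_z)=(1)$ yields $\Disc(E)=\ell^{\ell-2}$, and the inertia at $\ell$ now being conjugate to $\Gal(E'/E)$ forces $\ell$ to be totally split in $E$ with $\omega_E(\ell)=\ell-1$. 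All four $|G_{\mfb}|$ now equal $\ell$ corresponding to the same $E$, and $\sum_{\mfb}A_{\mfb}(s)F(\mfb,\chi,s)=\frac{1}{\ell}(1+(\ell-1)/\ell^s)F=\frac{1}{\ell}\prod_p(1+\omega_E(p)/p^s)$ completes the identity. The main obstacle is the discriminant computation, which requires careful bookkeeping of the conductor of $E'/K_z$ against Lemma \ref{lem:aac_ck} and of the Artin conductor of the standard $(\ell-1)$-dimensional representation of $F_\ell$.
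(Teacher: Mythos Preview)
Your argument follows essentially the same route as the paper: specialize Theorem~\ref{thm_main_q}, use Proposition~\ref{prop:count_sel} (via Corollary~\ref{cor:gb_pairing}) to determine $|G_\mfb|$, identify $E' = K_z(\root\ell\of\eps)$ through the Kummer pairing, and exhibit $E=\Q(\eps^{1/\ell}-\eps^{-1/\ell})$ with minimal polynomial $P(x)-\Tr(\eps)$. Your discriminant computation via the conductor--discriminant formula for $F_\ell$ (using $\Disc(E)=\f(V)$ for the $(\ell-1)$-dimensional irreducible) is a clean alternative to the paper's approach, which instead uses the tower $N_z/E/\Q$, tameness of $N_z/E$, and a divisibility argument to isolate $v_\ell(\Disc(E))$; both rest on the same conductor computation in Lemma~\ref{lem:aac_ck}.

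There is one genuine slip in the AAC-false case. You assert that the inertia group at $\ell$ being conjugate to $\Gal(E'/E)=\langle\tau\rangle$ ``forces $\ell$ to be totally split in $E$.'' It does not: with $D=\langle\tau\rangle$ acting on $F_\ell/\langle\tau\rangle$ one finds two orbits, giving $\ell\Z_E=\mfp_1\mfp_2^{\ell-1}$ with both primes of residue degree~$1$. (This is consistent with $\Disc(E)=\ell^{\ell-2}$, which already rules out $\ell$ being unramified.) So by the stated definition one would get $\omega_E(\ell)=0$, not $\ell-1$. The proposition's clause ``$\omega_E(\ell)=\ell-1$'' is a formal override of the Euler factor at $\ell$, and your algebraic bookkeeping \emph{does} correctly produce that factor: with all four $|G_\mfb|=\ell$, the nontrivial-character contribution is $\bigl(\sum_\mfb A_\mfb(s)\bigr)F(\mfb,\chi,s)=\tfrac{1}{\ell}\bigl(1+\tfrac{\ell-1}{\ell^s}\bigr)F$, which is exactly $\tfrac{1}{\ell}\prod_p(1+\omega_E(p)/p^s)$ once one sets the $\ell$-factor to $1+(\ell-1)/\ell^s$. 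So the formula stands; only the geometric justification for the $\ell$-factor needs to be replaced by this direct verification.
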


\begin{corollary}\label{cor:ac_exist}
Let $\ell \equiv 1 \mod 4$. Then there exist $D_{\ell}$-fields ramified only at $\ell$ if and only if
the Ankeny-Artin-Chowla conjecture is false for $\ell$.
\end{corollary}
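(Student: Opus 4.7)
The plan is to read the number of $D_\ell$-fields ramified only at $\ell$ off the coefficients of the Dirichlet series in Proposition \ref{prop:special_explicit}, and to observe that this count changes depending on AAC.

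First I would reduce to quadratic resolvent $K = \Q(\sqrt{\ell})$. Any $D_\ell$-field $L/\Q$ ramified only at $\ell$ has Galois closure ramified only at $\ell$, hence its unique quadratic subfield is ramified only at $\ell$; since $\ell \equiv 1 \pmod 4$, this forces $K = \Q(\sqrt{\ell})$. Martinet's formula $\gd(L/\Q) = \gd(K/\Q)^{(\ell-1)/2} f(L)^{\ell-1}$ further shows that such $L \in \calFL(\Q(\sqrt{\ell}))$ are precisely those with $f(L)$ a positive power of $\ell$. So I am asking whether $\sum_L 1/f(L)^s$ has a nonzero coefficient at some $1/\ell^{ks}$ with $k \geq 1$.

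Next I would extract the relevant Dirichlet coefficients directly from Proposition \ref{prop:special_explicit}. Because each Euler product there has local factor at $\ell$ of the linear shape $1 + C/\ell^s$, the coefficient $c_{\ell^k}$ of $1/\ell^{ks}$ vanishes automatically for $k \geq 2$. For $k = 0$ the additive constant $-\frac{1}{\ell - 1}$ cancels the $\ell$-free contributions of the two Euler products, yielding $c_1 = 0$ (as it must, since no $L$ has trivial conductor). For $k = 1$ the computation reduces to
\[
c_\ell \;=\; \frac{1}{\ell(\ell - 1)}(\ell - 1) \;+\; \frac{\omega_E(\ell)}{\ell} \;=\; \frac{1 + \omega_E(\ell)}{\ell},
\]
so the existence question becomes: is $\omega_E(\ell) \neq -1$?

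Finally I would pin down $\omega_E(\ell)$ in each regime. In the AAC-true case, the stated $\Disc(E) = \ell^{(3\ell - 1)/2}$ forces $\ell$ to ramify in $E$; since $[E:\Q] = \ell$ is prime, the identity $\sum e_i f_i = \ell$ with some $e_i > 1$ leaves no option but $e = \ell$, $f = 1$, $r = 1$, so $\ell$ is totally ramified in $E$ and $\omega_E(\ell) = -1$, giving $c_\ell = 0$ and no $D_\ell$-field ramified only at $\ell$. In the AAC-false case Proposition \ref{prop:special_explicit} stipulates $\omega_E(\ell) = \ell - 1$, whence $c_\ell = 1$ and such an extension exists (uniquely, in fact). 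The main obstacle in the plan is the underlying Proposition \ref{prop:special_explicit}, which packages the nontrivial Ankeny--Artin--Chowla input via the Selmer-group computation of Proposition \ref{prop:count_sel} and the Kummer duality of Corollary \ref{cor:gb_pairing}; granted that, the present corollary is essentially an exercise in reading off a single coefficient of the Dirichlet series.
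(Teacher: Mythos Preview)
Your approach is exactly that of the paper: read the corollary off the Dirichlet coefficients in Proposition~\ref{prop:special_explicit}. Your reduction to $K=\Q(\sqrt{\ell})$ and your computations of $c_1$, $c_\ell$, and $c_{\ell^k}$ for $k\ge 2$ are all correct, and in the AAC-false case you correctly quote $\omega_E(\ell)=\ell-1$ from the proposition.

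There is one genuine slip in the AAC-true case. You write that ``since $[E:\Q]=\ell$ is prime, the identity $\sum e_i f_i=\ell$ with some $e_i>1$ leaves no option but $e=\ell$, $f=1$, $r=1$''. That inference is false for non-Galois extensions: for instance a splitting of type $\p_1^2\p_2\p_3\p_4$ (all residue degrees $1$) is perfectly compatible with $\sum e_if_i=5$. The conclusion $\omega_E(\ell)=-1$ is nonetheless correct, and you have several easy fixes available. You can simply cite Remark~\ref{rk:sc}(2), which records precisely this. Alternatively, invoke Lemma~\ref{lem:disc_E}: under AAC the prime $(1-\ze)$ is totally ramified in $N_z/\Q_z$, hence $\ell$ is totally ramified in $N_z/\Q$ and a fortiori in $E$. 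Or argue directly from the discriminant: if $\ell$ were ramified in $E$ but not totally, every $e_i<\ell$ would be coprime to $\ell$, the ramification would be tame, and $v_\ell(\Disc(E))=\sum_i(e_i-1)f_i\le \ell-1<(3\ell-1)/2$, contradicting the stated discriminant.
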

\begin{proof}
Immediate; for $\ell$ not satisfying the conjecture, the field is unique and has discriminant $\ell^{\frac{3(\ell - 1)}{2}}$.
\end{proof}
\begin{remark}
This corollary recovers and strengthens a result of Jensen and Yui \cite[Theorem I.2.2]{JY}, who proved that 
if $\ell \equiv 1 \pmod 4$ is regular, then there are no $D_{\ell}$-fields with discriminant a power of $\ell$. (This can also
be seen for $\ell \equiv 3 \pmod 4$ from Corollary \ref{cor:34_simple}.)

The connection to the Ankeny-Artin-Chowla conjecture was previously observed by Lemmermeyer \cite{lemmer}, who
suggested that a proof of Corollary \ref{cor:ac_exist} may exist somewhere
in the literature.
\end{remark}

Before beginning the proof of Proposition \ref{prop:special_explicit} we establish the following:

\begin{lemma}\label{lem:disc_E} We have
$$\Disc(N_z)=\begin{cases}
\ell^{(3\ell^2-2\ell-3)/2}&\text{\quad if AAC is true,}\\
\ell^{\ell(\ell-2)}&\text{\quad if AAC is false.}
\end{cases}$$
In addition, in the extension $N_z/\Q_z$ the prime ideal $(1-\z)\Z_{\Q_z}$
is totally ramified if AAC is true and totally split otherwise.
\end{lemma}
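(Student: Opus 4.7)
The plan is to compute $\f(N_z/K_z)$ using the conductor formula from Theorem 3.7 of \cite{CoDiOl3} (as recalled in the proof of Proposition~\ref{prop:kum_hec}) and then combine it with the conductor-discriminant formula and the tower formula for discriminants. In the special case with $K=\Q(\sqrt{\ell})$ and $\ell\equiv 1\pmod 4$ we have $K_z=\Q_z$; let $\p_z=(1-\z_\ell)\Z_{\Q_z}$ be the unique prime above $\ell$. The extension in question is $N_z=K_z(\sqrt[\ell]{\eps})$ where $\eps$ is a fundamental unit of $\Q(\sqrt{\ell})$, and $\eps$ being a unit forces $\a_\al=\Z_{K_z}$, so the cited formula collapses to
\[
\f(N_z/K_z)=(1-\z_\ell)^\ell\big/\p_z^{A_\eps(\p_z)-1}=\p_z^{\ell+1-A_\eps(\p_z)}.
\]

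Next I would read off $A_\eps(\p_z)$ from Lemma~\ref{lem:aac_ck}. When AAC holds for $\ell$, the congruence $x^\ell\equiv\eps\pmod{\p_z^k}$ is soluble precisely for $k\le(\ell-1)/2<M(\p)=\ell$, giving $A_\eps(\p_z)=(\ell-1)/2$ and $\f(N_z/K_z)=\p_z^{(\ell+3)/2}$. When AAC fails, the congruence is soluble for every $k$, in particular for $k=M(\p)$, so by the $M(\p)+1$ convention of Definition~\ref{defpowfrac}(6) we have $A_\eps(\p_z)=\ell+1$, giving $\f(N_z/K_z)=\Z_{K_z}$ (unramified).

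Since $N_z/K_z$ is cyclic of prime degree $\ell$, the conductor-discriminant formula yields $\gd(N_z/K_z)=\f(N_z/K_z)^{\ell-1}$, and the tower formula then gives
\[
\Disc(N_z)=\N_{K_z/\Q}\bigl(\gd(N_z/K_z)\bigr)\cdot\Disc(K_z/\Q)^\ell.
\]
Using $\N_{K_z/\Q}(\p_z)=\ell$ and $|\Disc(\Q_z/\Q)|=\ell^{\ell-2}$, the AAC-true case produces the exponent $(\ell+3)(\ell-1)/2+\ell(\ell-2)=(3\ell^2-2\ell-3)/2$, and the AAC-false case produces $0+\ell(\ell-2)=\ell(\ell-2)$, matching the claimed formulas.

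For the behavior of $\p_z$ in $N_z/\Q_z$: if AAC holds, $\p_z$ divides $\f(N_z/K_z)$ and thus ramifies in the cyclic degree $\ell$ extension $N_z/K_z$, hence it is totally ramified. If AAC fails, Lemma~\ref{lem:aac_ck}(2) together with a standard projective-limit / Hensel argument (as in \cite[Lemma 10.2.10]{Coh1}) promotes the solubility for all $k$ to the statement $\eps\in(K_{z,\p_z}^*)^\ell$, so $x^\ell-\eps$ splits into linear factors over $K_{z,\p_z}$ and $N_z\otimes_{K_z}K_{z,\p_z}\isom\prod_{i=0}^{\ell-1}K_{z,\p_z}$, yielding total splitting. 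The main technical obstacle is bookkeeping around the $M(\p)+1$ convention for $A_\eps(\p_z)$ in the unramified case and cleanly tracking the exponent arithmetic.
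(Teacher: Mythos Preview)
Your proposal is correct and follows essentially the same route as the paper: compute $\f(N_z/\Q_z)=(1-\z)^{\ell+1-A_\eps}$ via \cite[Theorem~3.7]{CoDiOl3}, read off $A_\eps$ from Lemma~\ref{lem:aac_ck}, and plug into the tower formula $\Disc(N_z)=\N_{\Q_z/\Q}(\f(N_z/\Q_z))^{\ell-1}\Disc(\Q_z)^\ell$. For the splitting statement the paper invokes Hecke's theorem \cite[10.2.9]{Coh1} directly, whereas you spell out the Hensel step to conclude $\eps\in(K_{z,\p_z}^*)^\ell$; these are the same argument in slightly different packaging.
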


\begin{proof} The field $N_z$ is a Kummer extension of $K_z=\Q_z$ with 
defining equation $x^\ell-\eps=0$, so that
$$\Disc(N_z)=\pm\N_{\Q_z/\Q}(\gd(N_z/\Q_z))\Disc(\Q_z)^{\ell}=\pm\ell^{\ell(\ell-2)}\N_{\Q_z/\Q}(\f(N_z/\Q_z))^{\ell-1}\;,$$
where $\f(N_z/\Q_z)$ is the conductor.

 By \cite[Theorem 3.7]{CoDiOl3} applied to $K=\Q_z$ and $\al=\eps$ 
which is a unit, we have
$\f(N_z/\Q_z)=(1-\z)^{\ell+1-A_{\eps}}$, where $A_{\eps}=\ell+1$ if
$x^\ell\equiv\eps\pmod{(1-\z)^\ell}$ has a solution in $\Q_z$, and otherwise
$A_{\eps}$ is the maximal $k$ such that $x^\ell\equiv\eps\pmod{(1-\z)^k}$
has a solution. By Lemma \ref{lem:aac_ck} we have $A_{\eps}=(\ell-1)/2$ (resp.,
$A_{\eps}=\ell+1$) if AAC is true (resp., false),
hence $\f(N_z/\Q_z)=(1-\z)^{(\ell+3)/2}\Z_{\Q_z}$ (resp., 
$\f(N_z/\Q_z)=\Z_{\Q_z}$), from which the formula follows (note that the sign 
of the discriminant is positive since $\Q_z$ hence $N_z$ is totally 
complex). In addition, if AAC is false, so that $C_k$ is soluble for
all $k$, then Hecke's Theorem \cite[10.2.9]{Coh1} (an extension of
\cite[Theorem 3.7]{CoDiOl3})
implies that $(1-\z)\Z_{\Q_z}$
is totally split, while if AAC is true then it is totally ramified.\end{proof}

\begin{proof}[Proof of Proposition \ref{prop:special_explicit}]

The result follows for an undetermined $E$ by
Theorem \ref{thm:char_nf} and Proposition \ref{prop:count_sel}. 
To determine $E$, observe that 
Proposition \ref{prop:kum_hec} and the proof of Proposition \ref{prop:count_sel} imply that
$N_z = K_z(\epsilon^{1/\ell})$, and that the considerations in 
the proof of Theorem \ref{thm:char_nf} allow us to take $E$ to be {\itshape any} of the
(conjugate) degree $\ell$ subfields of $N_z$, so that it suffices to exhibit one.

We take $E = \Q(\eps^{1/\ell} - \eps^{-1/\ell})$ for any fixed choice of $\eps^{1/\ell}$, recalling that
the fundamental unit has norm $-1$. 
Then the minimal polynomial of $E$ is $P(x) - \Tr(\eps)$ by construction, or more precisely
by \eqref{eqn:def_cheby}.

It remains only to argue that
$$\Disc(E)=\begin{cases}
\ell^{(3\ell-1)/2}&\text{\quad if AAC is true,}\\
\ell^{\ell-2}&\text{\quad if AAC is false.}\end{cases}
$$
%In addition, in all cases we have
%$\N_{E/\Q}(\gd(N_z/E))=\ell^{\ell-2}=\Disc(\Q_z)$.

We assume that AAC is true (if false, a similar proof applies). On the one hand we have
$$\Disc(N_z)=\Disc(E)^{\ell-1}\N_{E/\Q}(\gd(N_z/E))\;,$$
in other words taking valuations and using the proposition:
$$(\ell-1)v_\ell(\Disc(E))=(3\ell^2-2\ell-3)/2-v_\ell(\N_{E/\Q}(\gd(N_z/E)))
=(\ell-1)(3\ell-1)/2+\ell-2-v_\ell(\N_{E/\Q}(\gd(N_z/E)))\;.$$
On the other hand, the extension $N_z/E$ is of degree $\ell-1$ hence tame,
so $v_\ell(\N_{E/\Q}(\gd(N_z/E)))\le\ell-2$. Divisibility by $\ell-1$
thus implies the result, together with the additional result that
$\N_{E/\Q}(\gd(N_z/E))=\ell^{\ell-2}=\Disc(\Q_z)$.
\end{proof}

We make some additional observations concerning Proposition \ref{prop:special_explicit}:
\begin{remarks}\label{rk:sc}\begin{enumerate}
\item In the equation for $E$ we may replace $\Tr(\eps)$ by
$\Tr(\pm\eps^m)$ for any odd $m\in\Z$ coprime to $\ell$.
\item Assuming AAC, the last product may be written as
$$\left(1-\dfrac{1}{\ell^s}\right)\prod_{p\equiv1\pmod{\ell}}\left(1+\dfrac{\om_E(p)}{p^s}\right)\;.$$
\item When      $p\equiv1\pmod\ell$ then $p$ is totally split in $E$ if and only if
$\eps^{(p-1)/\ell}\equiv1\pmod p$,
and otherwise $p$ is inert in $E$.
\item If in addition $\Q_z=\Q(\z_\ell)$ has class number $1$, then
$p$ is totally split in $E$ if and only if
$p=\N_{\Q_z/\Q}(\pi)$ for some $\pi\equiv1\pmod{\ell}$ in $\Q_z$.
\end{enumerate}\end{remarks}

For (1), it is easily seen that our construction still produces a degree $\ell$
subfield $E$. (2) follows because $\ell$ is totally
ramified in $E$.

To prove (3), again apply \cite[Theorem 10.2.9]{Coh1}: $p$ is totally split in
$E$ iff it is in $N_z/\Q_z$, hence iff $x^\ell\equiv\eps\pmod \p$ is
soluble in $\Q_z$. (Here $\p$ is any prime of $\Q_z$ above $p$, which must have degree $1$ since $p \equiv 1 \pmod \ell$ is totally split in $\Q_z$.)
This is equivalent to $\eps^{(p-1)/\ell}\equiv1\pmod{\p}$, which by Galois theory 
will then be  true for all primes $\p$ above $p$ since for any $\sigma\in\Gal(\Q_z/\Q)$
we have either $\sigma(\eps)=\eps$ or $\sigma(\eps)=-\eps^{-1}$, and $(p-1)/\ell$
is even so the sign disappears. Hence this is equivalent to the condition $\eps^{(p-1)/\ell}\equiv1 \pmod{p}$, as desired.

Finally, (4) follows from Eisenstein's reciprocity law.

\subsection{Explicit computations for $k=\Q$ in the general case}
Let $k = \Q$. In Theorem \ref{thm:char_nf} we saw that 
characters $\chi$ of $G_{\mfb}$ (up to the equivalence $\chi \sim \chi^a$ for
$(a, \ell) = 1$) correspond to degree $\ell$ fields $E$ having certain properties. In our 
companion paper \cite{CRST} with Rubinstein-Salzedo, we further proved the following:

\begin{theorem}\label{thm:nakagawa}\cite{CRST}
Suppose that $k = \Q$ and $K = \Q(\sqrt{D})$ with $D \neq 1, \pm \ell$, so that we are in the general case, and
as before let $K'$ be the mirror field of $K$.

Then the fields $E$ enumerated in Theorem \ref{thm:char_nf} are precisely those $F_{\ell}$-fields $E$
whose Galois closure contains $K'$, subject to the condition $\tau \sigma \tau^{-1} = \sigma^g$ described
there, satisfying the following additional conditions:
\begin{itemize}
\item
$E$ is totally real if $D < 0$, and has $\frac{ \ell - 1}{2}$ pairs of complex embeddings if $D > 0$.
\item
$|\Disc(E)|$ has the form $\ell^{k + b} D^{\frac{ \ell - 1}{2}}$, where $k$ and $b$ satisfy 
\begin{equation}\label{eqn:nakagawa_ab}
\begin{array}{rrl}
k \in \{ 0, 2 \}, & b = \ell - 2 & \text{ if $\ell \nmid D$\;,}\\
k \in \left\{ 0, (\ell + 3)/2 \right\}, & b = \frac{\ell - 3}{2} & \text{ if $\ell \mid D$ and $\ell \equiv 1 \pmod 4$\;,}\\
k \in \left\{ 0, (\ell + 5)/2 \right\}, & b = \frac{\ell - 5}{2} & \text{ if $\ell \mid D$ and $\ell \equiv 3 \pmod 4$\;.}\\
\end{array}
\end{equation}
\end{itemize}
Moreover, if $E$ is any $F_{\ell}$-field satisfying these last two properties, then its Galois closure automatically
contains $K'$.
\end{theorem}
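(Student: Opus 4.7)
The plan is to verify the claimed signature and discriminant conditions for the fields $E$ produced by Theorem \ref{thm:char_nf}, and then to establish the converse (together with the final assertion) by showing that the discriminant shape forces the unique degree-$(\ell-1)$ abelian subfield of the Galois closure of $E$ to be $K'$.

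First I would compute the signature. Let $E'$ denote the Galois closure of $E$, and recall from Lemma \ref{lemmirror} that $K' = \Q(\sqrt{D}(\ze-\ze^{-1}))$ is totally real when $D<0$ and has signature $(0,(\ell-1)/2)$ when $D>0$. When $D<0$, complex conjugation restricts trivially to $K'$ and so lies in $\Gal(E'/K')\cong C_\ell$; since this group has odd order, complex conjugation is trivial and $E$ is totally real. When $D>0$, complex conjugation on $K'$ is the unique involution $\tau^{(\ell-1)/2}$ of $\Gal(K'/\Q)$ and lifts to an involution of $F_\ell$. All involutions in $F_\ell$ are conjugate and the centralizer of $\tau^{(\ell-1)/2}$ in $F_\ell$ is exactly $\langle\tau\rangle$, so a coset count shows that precisely one of the $\ell$ cosets $g\langle\tau\rangle$ is fixed by complex conjugation, yielding signature $(1,(\ell-1)/2)$.

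For the discriminant, I would combine the conductor-discriminant formula $\Disc(E'/\Q)=\Disc(K'/\Q)^\ell\cdot\N_{K'/\Q}(\mff(E'/K'))^{\ell-1}$ with the tower formula $\Disc(E'/\Q)=\Disc(E/\Q)^{\ell-1}\cdot\N_{E/\Q}(\gd(E'/E))$. Since $E'/E$ has degree $\ell-1$, prime to $\ell$, and every prime ramifying in $E'/E$ lies above $\ell$, the relative different of $E'/E$ is tame and tractable, yielding $|\Disc(E/\Q)|=|\Disc(K'/\Q)|\cdot\N_{K'/\Q}(\mff(E'/K'))$ up to a controlled power of $\ell$. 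The discriminant $|\Disc(K'/\Q)|$ equals $\ell^{b}\cdot|D|^{(\ell-1)/2}$ with $b$ as in \eqref{eqn:nakagawa_ab}, computed via the presentation $K' = \Q(\ze+\ze^{-1})(\sqrt{-D(4-\alpha^2)})$ of Lemma \ref{lemmirror}; the three cases of the formula reflect whether $\ell\mid D$ together with the parity of $(\ell-1)/2$. Finally, $\mff(E'/K')$ divides some $\mfb'$ with $\mfb\in\calB$ as listed in \eqref{eqn:poss_b}, and the computation of $|G_\mfb|$ via Proposition \ref{propzb} collapses the possible conductors to precisely two values, contributing $\ell^k$ for the two allowed values of $k$ in each case of \eqref{eqn:nakagawa_ab}.

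For the converse and the last statement, suppose $E$ is any $F_\ell$-field with the claimed signature and $|\Disc(E)| = \ell^{k+b}|D|^{(\ell-1)/2}$. The Galois closure $E'$ contains a unique degree-$(\ell-1)$ abelian subfield $M$, namely the fixed field of the normal subgroup $C_\ell\subset F_\ell$. Running the preceding discriminant computation in reverse, the $D$-part of $\Disc(E)$ is controlled by the ramification in $M$ of primes dividing $D$, and the exponent $(\ell-1)/2$ together with the prescribed signature match the discriminant and signature of $K'$ uniquely among degree-$(\ell-1)$ abelian extensions of $\Q$, forcing $M=K'$. The character of $G_\mfb$ associated to $E$ is then recovered from $E'/K'$ by class field theory as in Theorem \ref{thm:char_nf}. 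The hard part will be the wild ramification bookkeeping at $\ell$: pinning down $|\Disc(K'/\Q)|$ in each subcase and eliminating the intermediate conductor possibilities for $\mff(E'/K')$ requires careful local analysis, for which I would invoke the detailed computations carried out in the companion paper \cite{CRST}.
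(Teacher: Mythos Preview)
The paper does not contain a proof of this result; it is stated with a citation to the companion paper \cite{CRST}, so there is no in-paper argument to compare against. Your outline follows the natural strategy---signature from the action of complex conjugation in $F_\ell$, discriminant by combining the conductor--discriminant formula for $E'/K'$ with the tower formula for $E'/E$, and the converse by identifying the unique cyclic degree-$(\ell-1)$ subfield of the Galois closure---and you rightly flag that the substantive work is the wild ramification analysis at $\ell$, which is exactly what \cite{CRST} supplies.

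One correction is in order: your claim that ``every prime ramifying in $E'/E$ lies above $\ell$'' is false. A prime $p\mid D$ with $p\ne\ell$ has inertia of order $2$ in $\Gal(E'/\Q)\cong F_\ell$ (it is ramified in $K'/\Q$ and unramified in $E'/K'$). A coset computation in $F_\ell$ shows that in $E$ such a $p$ has one prime with $e=1$ and $(\ell-1)/2$ primes with $e=2$, so above the unramified prime of $E$ there is genuine (tame) ramification in $E'/E$. This does not damage the strategy---the tame contributions at such primes balance exactly, giving $v_p(\Disc(E))=v_p(\Disc(K'))=(\ell-1)/2$---but the justification you gave for controlling $\N_{E/\Q}(\gd(E'/E))$ needs to be replaced by this direct computation.
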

Recall that we have $\mfb \in \calB = \{1, (\ell)^{1/2}, (\ell), (\ell)^{\ell/(\ell - 1)} \}$, with the possibility
$(\ell)^{1/2}$ occurring only if $\ell | D$. The complete list of fields enumerated in Theorem \ref{thm:nakagawa}
corresponds to $\mfb =  (\ell)^{\ell/(\ell - 1)}$. A careful reading of the proof of Theorem \ref{thm:nakagawa}
(in \cite{CRST}), with $k$ having the same meaning above as in \cite[Section 4]{CRST}, shows that the remaining $\mfb$ correspond to the following possibilities for $k$ in \eqref{eqn:nakagawa_ab}:

\medskip
\centerline{
\begin{tabular}{|c||c|c|c|c|}
\hline
Condition on $D$ & $\mfb = 1$ & $\mfb = (\ell)^{1/2}$ & $\mfb = (\ell)$ & $\mfb = (\ell)^{\ell/(\ell - 1)}$\\
\hline
$\ell\nmid D$ & $k = 0$ &  - &   $k = 0, 2$ & $k = 0, 2$ \\
\hline
$\ell\mid D$ and $\ell \equiv 1 \pmod 4$ & $k = 0$ & $k = 0$ & $k = 0, (\ell + 3)/2$ & $k = 0, (\ell + 3)/2$ \\
\hline
$\ell\mid D$ and $\ell \equiv 3 \pmod 4$ & $k = 0$ & $k = 0$ & $k = 0, (\ell + 5)/2$ & $k = 0, (\ell + 5)/2$ \\
\hline
\end{tabular}}
\medskip
One exception occurs for $\ell = 3$: Only $k = 0$ corresponds to $\mfb = (\ell)$ when $\ell \mid D$;
this is because the inequality $(\ell + 5)/2 \leq \ell - 1$ is true for all $\ell \equiv 3 \pmod 4$
except for $\ell = 3$. (Note also for $\ell = 3$ that this result is equivalent to part of Proposition 4.1 in
\cite{Coh-Tho}.)

This is sufficient to obtain an explicit formula for $\Phi_{\ell}(K, s)$ for any $K$ and $\ell$, provided that
the appropriate $F_\ell$-fields can be tabulated. We present two examples, which we also double-checked numerically
using a program written in PARI/GP \cite{pari}.

\begin{example}\label{ex:intro} Let $K = \Q(\sqrt{13})$ and $\ell = 5$. Then, we have

\begin{align*}
\sum_{L\in\calF_5(\Q(\sqrt{13}))}\dfrac{1}{f(L)^s} & =-\dfrac{1}{4}+
\dfrac{1}{20}\left(1+\dfrac{4}{25^s}\right)
\prod_{p}\left(1+\dfrac{4}{p^s}\right)
+ 
\dfrac{1}{5}\left(1 - \dfrac{1}{25^s}\right)
\prod_{p}\left(1+\dfrac{\omega_E(p)}{p^s}\right)
\\
& = 
59^{-s} + 409^{-s} + 475^{-s} + 619^{-s} + 709^{-s} + 1009^{-s} + \cdots + 4 \cdot 24131^{-s} + \cdots
\;,
\end{align*}
where the products are over primes $p \equiv 1, 16, 19, 24, 34, 36, 44, 51, 54, 56, 59, 61 \pmod{65}$,
$E$ is the field defined by the polynomial $x^5 + 5x^3 + 5x - 3$, and $24131 = 59 \cdot 409$.

\end{example}

\begin{example} Let $K = \Q(\sqrt{- 7 \cdot 41})$ and $\ell = 7$. Then, we have
\begin{align*}
\sum_{L\in\calF_7(\Q(\sqrt{-287}))}\dfrac{1}{f(L)^s} & =-\dfrac{1}{6}+
\dfrac{1}{6}\left(1+\dfrac{6}{7^s}\right)
\prod_{p}\left(1+\dfrac{6}{p^s}\right)
+ 
\left(1 - \dfrac{1}{7^s}\right)
\prod_{p}\left(1+\dfrac{\omega_E(p)}{p^s}\right)
\\
& = 
1 + 7 \cdot 301^{-s} + 7 \cdot 337^{-s} + 7 \cdot 581^{-s} + 7 \cdot 791^{-s} + \cdots + 42 \cdot 296897^{-s} + \cdots
\;,
\end{align*}
where the products are over primes $p\equiv\leg{D}{p}\pmod{\ell}$ excluding $p = \ell$,
$E$ is the field defined by the polynomial $x^7 - 14 x^5 + 56 x^3 - 56x - 15$, and 
$296897 = 337 \cdot 881$.
\end{example}

\section{Upper bounds for counting dihedral extensions of $\Q$}\label{sec:upper_bounds}

Write $N_\ell(D_\ell, X)$ for the number of $D_\ell$-fields $L$ with $|\Disc(L)| < X$.
Kl\"uners \cite{kluners} proved that $N(D_\ell, X) \ll X^{\frac{3}{\ell - 1} + \epsilon}$ and in this section
we prove Theorem \ref{thm:improved_bounds}, obtaining $N(D_\ell, X) \ll X^{\frac{3}{\ell - 1} - \frac{1}{\ell (\ell - 1)} + \epsilon}$
as an easy consequence of work of Ellenberg, Pierce, and Wood \cite{EPW}. 

As in the introduction, for each $D_\ell$-field $L$ we have $|\Disc(L)| = n^{\ell - 1} |D|^{\frac{\ell - 1}{2}}$,
where $n \in \Z$ and $\Q(\sqrt{D})$ is the quadratic resolvent field of $D$. For each $D$ and $n$, the multiplicity 
of $L$ with this discriminant is  
$O_\ell(\ell^{\rk_{\ell}(\Cl(D)) + 2 \omega(n)})$; this follows from \cite[Lemma 2.3]{kluners} or alternatively
Theorem \ref{thm_main_q} here.

We therefore have that
\begin{align*}
N(D_\ell, x) \ll & \sum_{|D| < X^{2/(\ell - 1)}} \Cl(D)[\ell] \sum_{n < (X/D^{(\ell - 1)/2})^{1/(\ell - 1)}} \ell^{2 \omega(n)} \\
\\ \ll & \ X^{\frac{1}{\ell - 1} + \epsilon} \sum_{|D| < X^{2/(\ell - 1)}} D^{- \frac{1}{2}} \Cl(D)[\ell].
\end{align*}
By \cite[Theorem 1.1]{EPW}, for all but $O(X^{\frac{2}{\ell - 1} \cdot \left( 1 - \frac{1}{2 \ell} \right)})$ discriminants $D$ in the sum,
we have $\Cl(D) \ll D^{\frac{1}{2} - \frac{1}{2\ell} + \epsilon}$, and the contribution of these
$D$ is
\[
\ll X^{\frac{1}{\ell - 1} + 2\epsilon} \sum_{D < X^{2/(\ell - 1)}} D^{- \frac{1}{2} + \frac{1}{2} - \frac{1}{2\ell}}
\ll X^{\frac{1}{\ell - 1} + 2\epsilon} X^{\frac{2}{\ell - 1} \cdot \left(1 - \frac{1}{2\ell}\right)} = X^{\frac{3}{\ell - 1} - \frac{1}{\ell (\ell - 1)} + 2\epsilon}.
\]
By the trivial bound on $\Cl(D)$, the contribution of the remaining $D$ is also
$\ll X^{\frac{1}{\ell - 1} + 2\epsilon} \cdot X^{\frac{2}{\ell - 1} \cdot \left( 1 - \frac{1}{2 \ell} \right)},$ completing the proof.

\end{document}